\documentclass[preprint,11pt,number]{elsarticle}

\usepackage{amssymb}
\usepackage{amsmath}
\usepackage{amsthm}
\usepackage{mathtools}
\usepackage{hyperref}
\usepackage{float}
\usepackage{pgfplots}
\usepackage{booktabs}
\usepackage{comment}
\usepackage[margin=3.5cm]{geometry}

\newcommand{\N}{\mathbb{N}}
\newcommand{\R}{\mathbb{R}}
\newcommand{\C}{\mathbb{C}}
\newcommand{\V}{\mathbb{V}}
\newcommand{\Pro}{\mathbb{P}}
\newcommand{\Sp}{\mathbb{S}}
\newcommand{\T}{\mathcal{T}}
\newcommand{\St}{\mathcal{S}}
\newcommand{\M}{\mathcal{M}}
\DeclareMathOperator{\Sy}{S}
\DeclareMathOperator{\Seg}{Seg}
\DeclareMathOperator{\U}{U}
\DeclareMathOperator{\Sym}{Sym}

\DeclareMathOperator{\vHD}{vHDdeg}
\DeclareMathOperator{\HD}{HDdeg}
\DeclareMathOperator{\vHDp}{vHDpol}
\DeclareMathOperator{\HDp}{HDpol}
\DeclareMathOperator{\ED}{EDdegree}

\newcommand{\bz}{\mathbf{z}}
\newcommand{\bw}{\mathbf{w}}
\newcommand{\bu}{\mathbf{u}}
\newcommand{\bv}{\mathbf{v}}
\newcommand{\bn}{\mathbf{n}}
\newcommand{\bm}{\mathbf{m}}
\DeclareMathOperator{\Tr}{Tr}
\DeclareMathOperator{\vect}{vec}

\DeclareMathOperator{\Id}{Id}
\DeclareMathOperator{\hdet}{hdet}
\DeclareMathOperator{\Vol}{Vol}
\DeclareMathOperator{\MV}{MV}
\DeclareMathOperator{\conv}{conv}
\DeclareMathOperator{\disc}{disc}
\DeclareMathOperator{\sing}{sing}

\DeclareMathOperator{\rk}{rk}

\theoremstyle{definition}
\newtheorem{definition}{Definition}[section]
\newtheorem{example}{Example}[section]
\newtheorem{remark}{Remark}[section]
\theoremstyle{plain}
\newtheorem{proposition}{Proposition}[section]
\newtheorem{corollary}[proposition]{Corollary}
\newtheorem{theorem}[proposition]{Theorem}
\newtheorem{lemma}[proposition]{Lemma}
\newtheorem*{theorem*}{Theorem}
\newtheorem*{proposition*}{Proposition}

\journal{....}
\pgfplotsset{compat=1.18}
\begin{document}

\begin{frontmatter}

\title{The Hermitian Distance degree of Tensor spaces}

\author[1]{Davide Furch\`{i}}%% Author name

%% Author affiliation
\affiliation[1]{organization={Dipartimento di Scienza e Alta Tecnologia, Università dell'Insubria},%Department and Organization
            %addressline={}, 
            city={Como},
            postcode={22100},
            country={Italy}}

%% Abstract
\begin{abstract}
In this paper, we investigate the Hermitian distance minimization problem for determinantal varieties, the Segre variety, and the Veronese variety. In particular, for binary forms, we obtain upper and lower bounds for the number of critical points that depend linearly on the order, and we determine all possible values in the case of order three.
\end{abstract}

%%Graphical abstract
%\begin{graphicalabstract}
%\includegraphics{grabs}
%\end{graphicalabstract}

%%Research highlights
\begin{comment}
\begin{highlights}
\item We define a number that evaluate the complexity of finding the minimum distance point from an algebraic variety
\item We develop several machinery address the problem
\item We present various examples
\end{highlights}
\end{comment}

%% Keywords
\begin{keyword}
%% keywords here, in the form: keyword \sep keyword
Hermitian distance \sep Segre variety \sep Veronese variety \sep geometric measure of entanglement

%% PACS codes here, in the form: \PACS code \sep code

%% MSC codes here, in the form: \MSC code \sep code
%% or \MSC[2008] code \sep code (2000 is the default)
%\MSC 12D10 \sep 14C17 \sep 15A54 \MSC[2020] 13P15
\MSC  13P25 \sep 14M12 \sep 15A69 \sep 30G30

\end{keyword}

\end{frontmatter}

%% Add \usepackage{lineno} before \begin{document} and uncomment 
%% following line to enable line numbers
%% \linenumbers

%% main text
%%

\section{Introduction}\label{sec:intro}

Tensor varieties are of central importance in various fields of both mathematics and physics. In a physical setting the problem of finding the closest rank one tensor with respect to the Hermitian distance is referred to as the \emph{geometric measure of entanglement} and usually what is required is to maximize the \emph{injective tensor norm}.

In this paper we study this problem through an algebraic geometric perspective using the Hermitian Distance degree introduced in \cite{f2025}. We start from the case of determinantal varieties and then proceed to consider the Segre and Veronese varieties focusing on the case of $2\times 2\times 2$ tensors. For this examples we compute the values related to Hermitian distance.

\section{Preliminaries}\label{sec:pre}

Let $\V$ be a $n$-dimensional complex vector space with complex conjugation, that is an antilinear map $\bar{\phantom{v}}\colon\V\to\V$ such that $\bar{\phantom{v}}^2=\Id$, endowed with a Hermitian form $q$. We are interested in the critical points of the induced real valued function
\begin{align*}
    q_{\bu}\colon X&\subseteq\V\to\R\\
    &\bz\quad\longmapsto q(\bu-\bz,\bu-\bz)
\end{align*}
where $\bu\in\V$ and $X=V(f_1,\ldots,f_s)\subseteq\V$ is an algebraic variety with $f_1,\ldots,f_s\in\C[\bz]$.

We report briefly the reasoning in \cite{f2025}. Set a basis for which $q$ is the canonical Hermitian inner product, by \cite[Lemma 2.1]{f2025} a regular \emph{critical point} $\bz\in X$ of $q_{\bu}$ must satisfy $\bar{\bu}-\bar{\bz}\perp_{\R}T_{\bz}X$. Write $J(f)$ for the $s\times n$ Jacobian matrix of $(f_1,\ldots,f_s)$ and denote the ideals
\begin{equation*}
  I_{X}=\langle f_1,\ldots,f_{s}\rangle\quad\text{and}\quad I_{X_{\sing}}\coloneqq I_X+\langle\text{$c$-minors of $J(f)$}\rangle
\end{equation*}
where $c$ is the codimension of $X$. We introduce two new collections $\bw$ and $\bv$ of variables $\lbrace w_1,\ldots,w_n\rbrace$ and $\lbrace v_1,\ldots,v_n\rbrace$ respectively and extend the conjugate map to a map $\ast$ between polynomial rings
\begin{align*}
    \ast\colon\C[\bz,\bw&,\bu,\bv]\to\C[\bz,\bw,\bu,\bv]\\
    &g\qquad\mapsto\quad\ast(g)=g^{\ast}
\end{align*}
such that $z_k^{\ast}=w_k$, $w_k^{\ast}=z_k$, $u_k^{\ast}=v_k$, $v_k^{\ast}=u_k$ for $k=1,\ldots,n$ and $a^{\ast}=\bar{a}$ for any $a\in\C$. Thus, using the ideal
\begin{equation*}
  I_{X}^{\prime}\coloneqq\left\langle\text{$(c+1)$-minors of $\begin{bmatrix}
              \bv-\bw\\
              J_{\bz}(f)
  \end{bmatrix}$}\right\rangle,
\end{equation*}
we define the \emph{Hermitian critical ideal} of $X$ as the following saturation
\begin{equation}\label{cond}
  \left(I_{X}+(I_{X})^{\ast}+I_{X}^{\prime}+(I_{X}^{\prime})^{\ast}\right)\colon\left( I_{X_{\sing}}\cdot (I_{X_{\sing}})^{\ast}\right)^{\infty}\subseteq\C[\bz,\bw,\bu,\bv].
\end{equation}
For fixed $\bu$ and $\bv$ we will call the ideal above, in the polynomial ring $\C[\bz,\bw]$, the Hermitian critical ideal of $(\bu,\bv)$. Thanks to \cite[Lemma 3.1]{f2025} we are able to state the following definition.

\begin{definition}
Let $X$ be an algebraic variety. The \emph{virtual Hermitian Distance degree} of $X$ is the constant number of solutions of the Hermitian critical ideal \eqref{cond} of $(\bu,\bv)$ in a Zariski-open subset of $\V^2$, and it is denoted $\vHD(X)$. The \emph{Hermitian Distance degree} of $X$ is the subset of $\N$ consisting in the numbers of critical points of $q_{\bu}$ which do not vary for $\bu$ in an Euclidean-open subset of $\V$, and it is denoted $\HD(X)$.
\end{definition}

We will use results from \cite{f2024} to deal with polynomials that can have conjugation applied to the variables, we call them \emph{generalized polynomials}. Moreover, we will refer to objects such as the \emph{(virtual) Hermitian Distance discriminant}, (v)HD discriminant from \cite[Section 4]{f2025} and the \emph{(virtual) Hermitian Distance polynomial}, (v)HD polynomial from \cite[Section 5]{f2025}.

For the sake of simplicity, we often will use the notation $[n]$ to indicate the set of natural numbers from $1$ up to $n\in\N$.

%----------------------------------------------------------------------------------------------------------------------------------------------

\section{Determinantal varieties}\label{sec:matrices}\thispagestyle{plain}

We identify the vector space $\V$ with the space of matrices $\M_{n}^{m}\coloneqq\C^n\otimes\C^m$ or simply $\M_n$ if $n=m$. For the Euclidean case see \cite{oss2014,dhost2014}.

Rewrite the Hermitian inner product as
\begin{equation*}
    \langle A,B\rangle_{\C}=\sum_{j_1=1}^{n}\sum_{j_2=1}^{m}A_{j_1j_2}\bar{B}_{j_1j_2}=\Tr(AB^H),
\end{equation*}
the norm $\|\phantom{A}\|_{\C}$ is usually called the \emph{Frobenius norm} or the \emph{Hilbert–Schmidt norm}.

\begin{definition}\label{defnormmat}
	Let $\|\phantom{\bz}\|_2$ be the $2$-norm on $\C^n$ and $\C^m$, then the \emph{induced matrix norm} of $\M_n^m$ is defined to be
	\begin{equation*}
		\|A\|_2\coloneqq\sup_{\bz\in\C^m\setminus\lbrace 0\rbrace}\frac{\|A\bz\|_2}{\|\bz\|_2}=\max_{\substack{\bz\in\C^m \\ \|\bz\|_2=1}}\|A\bz\|_2\qquad\text{for $A\in\M_n^m$.}
	\end{equation*}
\end{definition}

\begin{remark}\label{matrixnorm}
The Hermitian norm $\|\phantom{A}\|_{\C}$ and the induced norm $\|\phantom{A}\|_{2}$ do not coincide in $\M_{n}^{m}$. In general, if $\sigma_1\geq\ldots\geq\sigma_{\min\lbrace n,m\rbrace}$ are the singular values of $A\in\M_{n}^{m}$ then  
\begin{equation*}
    \|A\|_{\C}^2=\sum_{k=1}^{\min\lbrace n,m\rbrace}\sigma_k^2\qquad\text{and}\qquad\|A\|_{2}^2=\sigma_1^2.
\end{equation*}
\end{remark}

%------------------------------------------------------------------------------------------

\subsection{General matrices}\label{ssec:genmat}
We investigate here $\rk_{r}(\M_{n}^{m})$ the subvariety of $\M_{n}$ of matrices with rank bounded by some $r\in[\min\lbrace n,m\rbrace]$. We start by recalling the Eckart-Young-Mirsky theorem. This result solves the Hermitian distance problem for determinantal varieties, we do not report its proof, however we will see how to recover it from Corollary~\ref{coroeym}.

\begin{theorem*}[Complex Eckart-Young-Mirsky]
Let $A\in\M_{n}^{m}$ with $A=U\Sigma V^{H}$ a SVD and let $r\in[\rk(A)]$. All the critical points of the Hermitian distance function from $A$ to the variety $\rk_{r}(\M_{n}^{m})\setminus\rk_{r-1}(\M_{n}^{m})$
are $U(\Sigma_{k_1}+\ldots+\Sigma_{k_r})V^{H}$ with $1\leq k_1<\ldots<k_r\leq\rk(A)$, where $\Sigma_{k}\in\M_{n}^{m}$ is everywhere zero apart from the $(k,k)$ entry that is the $k$-th singular value for $k\in[\rk(A)]$.\label{cEY}
\end{theorem*}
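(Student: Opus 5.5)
The plan is to reduce the Hermitian distance problem on $\rk_r(\M_n^m)$ to a statement about singular vectors, working directly with the criticality condition recalled in Section~\ref{sec:pre}: a smooth point $Z$ is critical for $q_A$ if and only if $\bar A-\bar Z\perp_{\R}T_Z X$. First we observe that the Frobenius product is invariant under $Z\mapsto U^HZV$ for unitary $U,V$, and so is the variety $\rk_r(\M_n^m)$. Hence we may assume $A=\Sigma$ is diagonal with entries $\sigma_1\ge\dots\ge\sigma_{\rk(A)}>0$ and zeros elsewhere. Every point of $\rk_r\setminus\rk_{r-1}$ is smooth. Moreover $\perp_{\R}$ with respect to $\operatorname{Re}\langle\cdot,\cdot\rangle_\C$ for a complex subspace is the same as complex orthogonality, because the tangent space is $\C$-linear. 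So the condition becomes $\langle \Sigma-Z,T\rangle_\C=0$ for all $T\in T_Z\rk_r$.

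Next we use the standard description of the tangent space. Write $Z=PQ^H$ with $P\in\M_n^r$ and $Q\in\M_m^r$ of full rank. Then $T_Z\rk_r=\{XQ^H+PY^H\}$, where $X$ and $Y$ are arbitrary. Setting $B=\Sigma-Z$, orthogonality to all $XQ^H$ gives $\Tr(XQ^HB^H)=0$ for all $X$, i.e.\ $BQ=0$. Orthogonality to all $PY^H$ gives $B^HP=0$. So $Z$ is critical if and only if $(\Sigma-Z)Q=0$ and $(\Sigma-Z)^HP=0$, equivalently
\[
\operatorname{col}(\Sigma-Z)\perp\operatorname{col}(Z)\qquad\text{and}\qquad\operatorname{row}(\Sigma-Z)\perp\operatorname{row}(Z).
\]

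The key step, and the main obstacle, is to deduce from these two orthogonality relations that $Z$ is a sum of $r$ terms of the SVD of $\Sigma$. From $\Sigma Q=ZQ$ and $\Sigma^HP=Z^HP$ we get $\Sigma\Sigma^H P=\Sigma Z^HP$. The relations also give $Z^H(\Sigma-Z)=0$ and $(\Sigma-Z)Z^H=0$, hence $\Sigma Z^H=ZZ^H=Z\Sigma^H$. It follows that $\Sigma\Sigma^H$ commutes with $ZZ^H$, and likewise $\Sigma^H\Sigma$ commutes with $Z^HZ$. Consequently $\Sigma Z^H = ZZ^H$ is Hermitian positive semidefinite, and $\Sigma=Z+(\Sigma-Z)$ is an orthogonal decomposition in which both the row spaces and the column spaces are mutually orthogonal. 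Taking SVDs of $Z$ and of $\Sigma-Z$ separately, their concatenation is an SVD of $\Sigma$. Therefore the singular values of $Z$ are $r$ of the $\sigma_k$, and $Z$ is the corresponding partial sum. Here one has to be careful in the case of repeated singular values: the SVD of $A$ is then non-unique, and the statement should be read with $U,V$ ranging over all SVDs of $A$. Also, the singular values of $Z$ must be nonzero, so the indices are constrained by $k_i\le\rk(A)$. Conversely, each such $U(\Sigma_{k_1}+\dots+\Sigma_{k_r})V^H$ visibly satisfies the two orthogonality conditions, which gives the full list.

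Finally we undo the unitary change of coordinates. This is immediate because $Z\mapsto UZV^H$ is an isometry of $\M_n^m$ preserving $\rk_r$, so critical points correspond to critical points. This completes the plan.
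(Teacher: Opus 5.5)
Your proof is correct, and it takes a genuinely different route from the paper. The paper gives no direct proof; it says the theorem is recovered from the proof of Lemma~\ref{matrices} together with Corollary~\ref{coroeym}.

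\textbf{The paper's route.} It works with the complexified Hermitian critical ideal in $(\bz,\bw)$ for independent data $(\bu,\bv)$. It writes $\bz$ and $\bw$ as sums of $r$ rank-one terms and differentiates $\langle\bz-\bu,\bw-\bv\rangle_{\R}$. From this it shows that the left factors must be $r$ eigenvectors of $\bu\bv^T$, which gives $\binom{\rk(\bu\bv^T)}{r}$ solutions. At $\bv=\bar{\bu}$ this matrix is $\bu\bu^H$, whose eigenvectors are the left singular vectors. Separately, Parlett's formula writes the discriminant of the HD polynomial as a sum of squared moduli. Hence the HD discriminant has real codimension at least $2$, and $\HD$ is the single value $\binom{\min\lbrace n,m\rbrace}{r}$.

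\textbf{Your route.} You use the explicit tangent space $\lbrace XQ^H+PY^H\rbrace$ to turn criticality into $Z^H(\Sigma-Z)=0$ and $(\Sigma-Z)Z^H=0$. That is an orthogonal splitting of $\Sigma$ with orthogonal row and column spaces, and you then concatenate SVDs.

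\textbf{What each route buys.} Yours is more elementary and works for every $A$, not only generic ones. It also captures the positive-dimensional critical families that appear when singular values repeat. There, as you rightly note, the statement holds only with $U,V$ ranging over all SVDs of $A$. The paper's route gives information yours does not: the solution count for the complexified problem (hence $\vHD$), the explicit vHD polynomial, and the sum-of-squares structure of the discriminant. That places the theorem inside the general HD-degree framework.

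\textbf{Minor remarks.} The commutation of $\Sigma\Sigma^H$ with $ZZ^H$ and the positive semidefiniteness of $\Sigma Z^H$ are not needed; the two relations $Z^H(\Sigma-Z)=0$ and $(\Sigma-Z)Z^H=0$ carry the whole argument. When singular values are distinct, you should say explicitly why you land on the given $U,V$. Each singular pair $(u_k,v_k)$ is determined up to a common phase, which cancels in $u_kv_k^H$. Finally, in the paper $\perp_{\R}$ refers to the bilinear form $\langle\cdot,\cdot\rangle_{\R}$, so $\bar{A}-\bar{Z}\perp_{\R}T_ZX$ literally reads $\langle T,A-Z\rangle_{\C}=0$. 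Your reading via the real part of $\langle\cdot,\cdot\rangle_{\C}$ arrives at the same condition, so nothing breaks.
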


Set $X=\rk_{r}(\M_{n}^{m})$ in this subsection. From the theorem above we get $\HD(X)=\lbrace\binom{\min\lbrace n,m\rbrace}{r}\rbrace$. We now compute the number of solutions of the Hermitian critical ideal.

\begin{lemma}\label{matrices}
Let $\bu,\bv\in\M_{n}^{m}$ be such that $\rk(\bu\bv^T)\geq r$, the number of solutions of the Hermitian critical ideal of $(\bu,\bv)$ is $\binom{\rk(\bu\bv^T)}{r}$. In particular, $\vHD(X)=\binom{\min\lbrace n,m\rbrace}{r}$.
\end{lemma}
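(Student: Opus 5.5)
We first write the Hermitian critical ideal of $(\bu,\bv)$ concretely for $X=\rk_{r}(\M_n^m)$, with $\bu,\bv$ independent matrices (not necessarily conjugate). We look for pairs $(\bz,\bw)$ with $\bz\in X$ and $\bw\in X$ (the latter coming from $(I_X)^{\ast}$, since the minors have real coefficients). On the smooth locus, where $\rk\bz=\rk\bw=r$, the ideal $I_X^{\prime}$ says that $\bv-\bw$ lies in the row span of $J_{\bz}(f)$, i.e.\ in the normal space to $X$ at $\bz$. Symmetrically, $(I_X^{\prime})^{\ast}$ says that $\bu-\bz$ is normal to $X$ at $\bw$.

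We would encode these conditions with the bilinear (not sesquilinear) pairing $\Tr(AB^T)$. The tangent space at a rank-$r$ matrix $\bz$ is $\{P\bz+\bz Q\}$, so its annihilator under $\Tr(AB^T)$ consists of the $N$ with $N\bz^T=0$ and $\bz^TN=0$. The critical equations then become
\[(\bv-\bw)\bz^T=0,\qquad \bz^T(\bv-\bw)=0,\qquad (\bu-\bz)\bw^T=0,\qquad \bw^T(\bu-\bz)=0 .\]
These are the transpose analogues of the Euclidean ED equations $(\bu-\bz)\bz^T=0$ from \cite{dhost2014}. The saturation by $I_{X_{\sing}}\cdot(I_{X_{\sing}})^{\ast}$ removes exactly the solutions where $\bz$ or $\bw$ has rank $<r$.

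Next we solve the system. From the equations we get $\bz\bw^T=\bu\bw^T$ and $\bw\bz^T=\bv\bz^T$, so $\bz\bw^T=(\bw\bz^T)^T=\bz\bv^T$. Similarly $\bz^T\bw=\bz^T\bv$ and $\bw^T\bz=\bw^T\bu$. Combining these, $\bz\bw^T$ commutes with $\bu\bv^T$. The analogue of the SVD here is a simultaneous decomposition $\bu=P D_1 Q^{T}$, $\bv=P^{-T} D_2 Q^{-1}$, whose pivot values are given by the eigenvalues of $\bu\bv^T$. More precisely, the pair $(\bu,\bv)$ under the action $(\bu,\bv)\mapsto(P\bu Q^T,P^{-T}\bv Q^{-1})$ of $GL_n\times GL_m$ is the relevant structure; it preserves the pairing and the whole system. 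For generic $(\bu,\bv)$ with $\bu\bv^T$ diagonalizable with distinct nonzero eigenvalues, we reduce to $\bu=\sum_{k\le \rho}e_k\otimes f_k$ and $\bv=\sum_k \lambda_k e_k\otimes f_k$ in dual bases, where $\rho=\rk(\bu\bv^T)$. Then we show that every critical pair is $\bz=\sum_{k\in S}e_k\otimes f_k$, $\bw=\sum_{k\in S}\lambda_k e_k\otimes f_k$ with $|S|=r$ and $S\subseteq[\rho]$. This gives $\binom{\rho}{r}$ solutions, mirroring the proof of the Eckart--Young--Mirsky theorem: commuting with a diagonal matrix having distinct eigenvalues forces $\bz\bw^T$ to be diagonal, and the rank conditions force the support pattern.

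The main obstacle is this classification step. It has two parts:
\begin{itemize}
\item proving that $\bz$ and $\bw$ are themselves block-diagonal in the adapted bases, not merely that $\bz\bw^T$ is diagonal;
\item handling non-generic $(\bu,\bv)$ with $\rk(\bu\bv^T)\ge r$, where $\bu\bv^T$ may fail to be diagonalizable or may have repeated eigenvalues.
\end{itemize}
For the first part we would also use the conditions $\bz^T\bw=\bz^T\bv$ and $\bw^T\bz=\bw^T\bu$, which show that the column and row spaces of $\bz$ are spanned by eigenvectors of $\bu\bv^T$ and $\bv^T\bu$. For the second part, if a direct argument fails, we would restrict the count to the Zariski-open set of generic $(\bu,\bv)$. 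That set suffices for the ``in particular'' statement, because generically $\rk(\bu\bv^T)=\min\{n,m\}$, which gives $\vHD(X)=\binom{\min\{n,m\}}{r}$.

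Finally, we confirm multiplicity one for each solution by checking that the Jacobian of the system is nonsingular at the diagonal solutions.
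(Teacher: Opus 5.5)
Your proposal rests on the same key fact as the paper's proof: $\mathrm{col}(\bz)$ is an $r$-dimensional $\bu\bv^T$-invariant subspace. You reach and use it differently, though.

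The paper writes $\bz$ and $\bw$ as sums of $r$ rank-one terms and differentiates $\langle\bz-\bu,\bw-\bv\rangle_{\R}$ with respect to the factor vectors. It deduces that the $\bz^{(k,1)}$ span an invariant subspace of $\bu\bv^T$. It then recovers the factors of $\bw$ and the scalars from linear systems, asserting there is ``no essential difference'' in the remaining choices.

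You instead read the critical ideal directly as the four bilinear normal-space equations $(\bv-\bw)\bz^T=\bz^T(\bv-\bw)=(\bu-\bz)\bw^T=\bw^T(\bu-\bz)=0$, which are correctly derived. You then use the equivariance $(\bu,\bv)\mapsto(P\bu Q^T,P^{-T}\bv Q^{-1})$ to pass to a normal form. This is cleaner. Once finished, it actually proves that $(\bz,\bw)$ is determined by the invariant subspace, which the paper only asserts.

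\textbf{The classification step is still missing, and your hint does not close it.} Knowing that $\bz\bw^T$ is diagonal and that the column and row spaces of $\bz$ are invariant under $\bu\bv^T$ and $\bv^T\bu$ does not determine $\bz$ and $\bw$ themselves. For $m>n$ the row-space condition is also weak, since in the normal form $\bv^T\bu=\mathrm{diag}(\Lambda,0)$ has an $(m-n)$-dimensional kernel.

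The missing ingredient is the companion statement for $\bw$. From $\bu^T\bw=\bz^T\bw$ and $\bv\bz^T=\bw\bz^T$ you get $\bv\bu^T\bw=\bw(\bz^T\bw)$, so $\mathrm{col}(\bw)$ is $\bv\bu^T$-invariant. Then argue as follows.
\begin{enumerate}
\item Transpose if necessary so that $n\le m$. Let $e_i,f_j$ be the standard bases of $\C^n,\C^m$. Take $\bu=(I_n\;0)$ and $\bv=(\Lambda\;0)$ with $\Lambda=\mathrm{diag}(\lambda_1,\dots,\lambda_n)$ having distinct nonzero entries. This normal form is reachable exactly when $\bu\bv^T$ has $n$ distinct nonzero eigenvalues.
\item Then $\bu\bv^T=\bv\bu^T=\Lambda$, so $\mathrm{col}(\bz)=\langle e_i: i\in S\rangle$ and $\mathrm{col}(\bw)=\langle e_i: i\in T\rangle$ with $|S|=|T|=r$.
\item The equation $\bz^T(\bv-\bw)=0$ says $(\bv-\bw)^Te_i=0$ for $i\in S$. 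So the $i$-th row of $\bw$ is $\lambda_i f_i^T\neq 0$ for $i\in S$.
\item The rows of $\bw$ outside $T$ vanish, so $S\subseteq T$, hence $S=T$ and $\bw=\sum_{i\in S}\lambda_i e_if_i^T$.
\item Symmetrically, $\bw^T(\bu-\bz)=0$ gives $\bz=\sum_{i\in S}e_if_i^T$.
\end{enumerate}
Combined with your check that these pairs are solutions, this gives exactly $\binom{n}{r}$ solutions on a nonempty Zariski-open set. That is the claim $\vHD(X)=\binom{\min\{n,m\}}{r}$.

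\textbf{Your fallback to generic $(\bu,\bv)$ is the right move.} The first sentence of the lemma is false without further hypotheses, so no argument can prove it.
\begin{itemize}
\item Take $n=m=2$, $r=1$, $\bu=I_2$ and $\bv=\begin{pmatrix}\lambda&1\\0&\lambda\end{pmatrix}$ with $\lambda\neq0$. Then $\rk(\bu\bv^T)=2$, but there are no solutions. Invariance forces $\mathrm{col}(\bz)=\langle e_2\rangle$ (the only eigenline of $\bv^T$) and $\mathrm{col}(\bw)=\langle e_1\rangle$ (the only eigenline of $\bv$). Meanwhile $\bz^T(\bv-\bw)=0$ forces the second row of $\bw$ to be $(0,\lambda)$, a contradiction.
\item For $\bu=\bv=I_2$, every $\bz=\bw=xx^T/(x^Tx)$ with $x^Tx\neq0$ is a solution, giving an infinite family.
\item For $n=2$, $m=3$, $\bu=e_1f_1^T+e_2f_2^T$ and $\bv=\lambda e_1f_1^T+e_2f_3^T$, the matrix $\bu\bv^T=\mathrm{diag}(\lambda,0)$ is diagonalizable with distinct eigenvalues and has rank $1$. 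Yet $(e_1f_1^T,\lambda e_1f_1^T)$ and $(e_2f_2^T,e_2f_3^T)$ are both solutions.
\end{itemize}
The paper's count of ``essentially different choices'' of eigenvectors tacitly relies on the same genericity that your normal form makes explicit.
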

\begin{proof}
Consider a point $(\bz,\bw)\in X\times\overline{X}$ and the formulations
\begin{equation*}
   \bz=\sum_{k=1}^r\lambda_{\bz}^{(k)}\bz^{(k,1)}\otimes\bz^{(k,2)}\qquad\text{and}\qquad\bw=\sum_{j=1}^r\lambda_{\bw}^{(j)}\bw^{(j,1)}\otimes\bw^{(j,2)}
\end{equation*}
where $\lbrace\bz^{(k,1)}\rbrace_{k=1}^r,\lbrace\bw^{(j,1)}\rbrace_{j=1}^r\subseteq\C^n$, $\lbrace\bz^{(k,2)}\rbrace_{k=1}^r,\lbrace\bw^{(j,2)}\rbrace_{j=1}^r\subseteq\C^m$ and $0\neq\lambda_{\bz}^{(k)},\lambda_{\bw}^{(j)}\in\C$ for any $k,j=1,\ldots,r$, to write
\begin{align*}
    \langle\bz-\bu,\bw-\bv\rangle_{\R}&=\sum_{k=1}^r\sum_{j=1}^r\lambda_{\bz}^{(k)}\lambda_{\bw}^{(j)}\langle\bz^{(k,1)},\bw^{(j,1)}\rangle_\R\langle\bz^{(k,2)},\bw^{(j,2)}\rangle_\R\\
    &\quad-\sum_{k=1}^r\lambda_{\bz}^{(k)}\langle\bz^{(k,1)}\otimes\bz^{(k,2)},\bv\rangle_{\R}-\sum_{j=1}^r\lambda_{\bw}^{(j)}\langle\bu,\bw^{(j,1)}\otimes\bw^{(j,2)}\rangle_{\R}+\langle\bu,\bv\rangle_{\R}.
\end{align*}
The derivative with respect to $z_{\ell}^{(k,1)}$ for $\ell=1,\ldots,n$ is
\begin{equation*}
    \lambda_{\bz}^{(k)}\left(\sum_{j=1}^r\lambda_{\bw}^{(j)}\bw^{(j,1)}_{\ell}\langle\bz^{(k,2)},\bw^{(j,2)}\rangle_\R-\langle e_{\ell}\otimes\bz^{(k,2)},\bv\rangle_{\R}\right)
\end{equation*}
and we want to study the zero locus. By considering the derivatives with respect to any $z_{\ell}^{(k,s)}$ and $w_{\ell}^{(j,s)}$ for $s=1,2$, we get the conditions
\begin{alignat}{2}
    \bu^T\bw^{(j,1)}&=\sum_{k=1}^r\lambda_{\bz}^{(k)}\langle\bz^{(k,1)},\bw^{(j,1)}\rangle_{\R}\bz^{(k,2)},\hspace{0.6cm}\bu\bw^{(j,2)}&&=\sum_{k=1}^r\lambda_{\bz}^{(k)}\langle\bz^{(k,2)},\bw^{(j,2)}\rangle_{\R}\bz^{(k,1)},\label{Eigen1}\\
    \bv^T\bz^{(k,1)}&=\sum_{j=1}^r\lambda_{\bw}^{(j)}\langle\bz^{(k,1)},\bw^{(j,1)}\rangle_{\R}\bw^{(j,2)},\hspace{0.5cm}\bv\bz^{(k,2)}&&=\sum_{j=1}^r\lambda_{\bw}^{(j)}\langle\bz^{(k,2)},\bw^{(j,2)}\rangle_{\R}\bw^{(j,1)}.\label{Eigen2}
\end{alignat}
From equations \eqref{Eigen2}, by knowing $\lbrace\bz^{(k,1)}\rbrace_{k=1}^r$, we are able to choose $\lbrace\bw^{(j,2)}\rbrace_{j=1}^r$ basis of the subspace given by the images under $\bv^T$ and similarly for the basis $\lbrace\bw^{(j,1)}\rbrace_{j=1}^r$. There is no essential difference in the choice of these bases. On the other hand, by applying the maps $\bu$ and $\bu^T$ respectively we get
\begin{align*}
  \bu\bv^T\bz^{(\ell,1)}&=\sum_{k=1}^r\left(\sum_{j=1}^r\lambda_{\bz}^{(k)}\lambda_{\bw}^{(j)}\langle\bz^{(k,1)},\bw^{(j,1)}\rangle_\R\langle\bz^{(k,2)},\bw^{(j,2)}\rangle_\R\right)\bz^{(k,1)},\\
  \qquad\bu^T\bv\bz^{(\ell,2)}&=\sum_{k=1}^r\left(\sum_{j=1}^r\lambda_{\bz}^{(k)}\lambda_{\bw}^{(j)}\langle\bz^{(k,1)},\bw^{(j,1)}\rangle_\R\langle\bz^{(k,2)},\bw^{(j,2)}\rangle_\R\right)\bz^{(k,2)}
\end{align*}
for $\ell=1,\ldots,r$. Thus, by knowing $\lbrace\bz^{(k,1)}\rbrace_{k=1}^r$ there is no essentially different choice for the basis $\lbrace\bz^{(k,2)}\rbrace_{k=1}^r$. In particular, we need to choose a collection $\lbrace\bz^{(k,1)}\rbrace_{k=1}^r$ which generates a $r$ dimensional eigenspace of $\bu\bv^T$. Multiplying on the left the left equations of \eqref{Eigen1} and \eqref{Eigen2} by $(\bw^{(j,2)})^T$ and $(\bz^{(k,2)})^T$ respectively, we get the equalities
\begin{align*}
    \langle\bz^{(k,1)}\otimes\bz^{(k,2)},\bv\rangle_{\R}&=\sum_{j=1}^r\lambda_{\bw}^{(j)}\langle\bz^{(k,1)},\bw^{(j,1)}\rangle_\R\langle\bz^{(k,2)},\bw^{(j,2)}\rangle_{\R},\\
    \langle\bu,\bw^{(j,1)}\otimes\bw^{(j,2)}\rangle_{\R}&=\sum_{k=1}^r\lambda_{\bz}^{(k)}\langle\bz^{(k,1)},\bw^{(k,1)}\rangle_\R\langle\bz^{(k,2)},\bw^{(k,2)}\rangle_{\R},
\end{align*}
and we compute the scalars $\lambda_{\bz}^{(k)},\lambda_{\bw}^{(j)}$ for $k,j=1,\ldots,r$ by solving the linear systems provided by these last equations. At the end, we have $\binom{\rk(\bu\bv^T)}{r}$ essentially different possible choices for the collection $\lbrace\bz^{(k,1)}\rbrace_{k=1}^r$ of eigenvectors of $\bu\bv^T$ which in turn permits to compute the other vectors and the scalars.
\end{proof}

\begin{proposition}\label{hddegmatrix}
The roots in $t^2$ of the vHD polynomial $\vHDp_{X,\bu,\bv}(t^2)$ are the sums of $\min\lbrace n,m\rbrace-r$ eigenvalues of $\bu\bv^T$ if $n\leq m$ or of $\bu^T\bv$ if $n\geq m$.    
\end{proposition}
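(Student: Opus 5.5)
We first need to recall what the vHD polynomial is. Following \cite[Section 5]{f2025}, $\vHDp_{X,\bu,\bv}(t^2)$ is the polynomial in $t^2$ whose roots, for generic $(\bu,\bv)$, are the values of the (complexified) squared distance $\langle\bz-\bu,\bw-\bv\rangle_{\R}$ at the solutions $(\bz,\bw)$ of the Hermitian critical ideal of $(\bu,\bv)$. Its degree is $\vHD(X)$, and its coefficients are polynomial in $(\bu,\bv)$. So the plan is to evaluate this objective function at each of the $\binom{\min\lbrace n,m\rbrace}{r}$ critical points produced in the proof of Lemma~\ref{matrices}. The identification of roots then extends from a Zariski-dense set to all $(\bu,\bv)$ by continuity of the coefficients.

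\textbf{Setting up coordinates.} Assume $n\leq m$; the case $n\geq m$ follows by transposing, which exchanges $\bu\bv^T$ with $\bu^T\bv$. For generic $(\bu,\bv)$ we simultaneously bring the pair to a normal form. Take an eigenbasis $\{\mathbf{p}_k\}_{k=1}^n$ of $\bu\bv^T$ with eigenvalues $\mu_1,\ldots,\mu_n$, which are distinct for generic data. Set $\mathbf{q}_k\propto\bv^T\mathbf{p}_k$, and choose the dual pairings with respect to the bilinear form $\langle\cdot,\cdot\rangle_{\R}$ so that
\begin{equation*}
\bu=\sum_k \alpha_k\, \mathbf{p}_k\otimes \mathbf{q}_k',\qquad \bv=\sum_k \beta_k\, \mathbf{p}_k'\otimes \mathbf{q}_k,
\end{equation*}
where $\{\mathbf{p}_k'\}$ and $\{\mathbf{q}_k'\}$ are bilinear duals and $\alpha_k\beta_k=\mu_k$. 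This is the complexified analogue of the SVD, in which $\bar{\ }$ is replaced by the independent $\bv$. Establishing this decomposition cleanly for generic $(\bu,\bv)$ is the main obstacle. It requires $\bu\bv^T$ to be diagonalizable with nonzero distinct eigenvalues, and the pairing $\langle\mathbf{p}_k,\mathbf{p}_l'\rangle_{\R}=\delta_{kl}$ must be compatible with $\bv^T\bu$ on the $\C^m$ side. I would verify this by noting that $\bv^T\bu$ and $\bu\bv^T$ share their nonzero eigenvalues, and that the eigenvectors of $\bu\bv^T$ and of $\bv\bu^T=(\bu\bv^T)^T$ for distinct eigenvalues are $\langle\cdot,\cdot\rangle_{\R}$-orthogonal.

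\textbf{Evaluating at the critical points.} By the proof of Lemma~\ref{matrices}, each critical point corresponds to a subset $S\subseteq[n]$ with $|S|=r$. At that point $\bz=\sum_{k\in S}\alpha_k\mathbf{p}_k\otimes\mathbf{q}_k'$ and $\bw=\sum_{k\in S}\beta_k\mathbf{p}_k'\otimes\mathbf{q}_k$, the truncations of $\bu$ and $\bv$; this is exactly the complex Eckart--Young--Mirsky structure. One checks directly that these satisfy \eqref{Eigen1} and \eqref{Eigen2}. Then
\begin{equation*}
\langle\bz-\bu,\bw-\bv\rangle_{\R}=\Big\langle\sum_{k\notin S}\alpha_k\mathbf{p}_k\otimes\mathbf{q}_k',\sum_{l\notin S}\beta_l\mathbf{p}_l'\otimes\mathbf{q}_l\Big\rangle_{\R}=\sum_{k\notin S}\alpha_k\beta_k=\sum_{k\notin S}\mu_k .
\end{equation*}
The cross terms vanish by the duality relations. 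This is a sum of $n-r=\min\lbrace n,m\rbrace-r$ eigenvalues of $\bu\bv^T$.

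\textbf{Concluding.} The number of such sums equals $\binom{n}{r}=\vHD(X)$, which is the degree of the vHD polynomial. Hence these sums are all of its roots in $t^2$, counted with multiplicity. For generic $(\bu,\bv)$ the eigenvalue sums are distinct, so the polynomial is exactly $\prod_{|S|=n-r}\big(t^2-\sum_{k\in S}\mu_k\big)$. Since the coefficients of $\vHDp_{X,\bu,\bv}$ are polynomial in $(\bu,\bv)$, and so are the elementary symmetric functions of these sums (they are symmetric in the eigenvalues, hence polynomial in the entries of $\bu\bv^T$), the identity holds for all $(\bu,\bv)$.
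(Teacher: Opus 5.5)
Your proof is correct, but it takes a different route from the paper's.

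\textbf{The paper's route.} The paper never puts $(\bu,\bv)$ in a normal form. It reads off from the last relations in the proof of Lemma~\ref{matrices} (obtained by pairing \eqref{Eigen1} and \eqref{Eigen2} with $\bw^{(j,2)}$ and $\bz^{(k,2)}$) that at a critical point $\langle\bz,\bv\rangle_{\R}=\langle\bu,\bw\rangle_{\R}=\langle\bz,\bw\rangle_{\R}$. It identifies this common value with the sum of the $r$ eigenvalues of $\bu\bv^T$ attached to the chosen eigenvectors, and it uses $\langle\bu,\bv\rangle_{\R}=\Tr(\bu\bv^T)$. The four-term expansion of $\langle\bz-\bu,\bw-\bv\rangle_{\R}$ then collapses to the complementary sum.

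\textbf{Your route.} You build a simultaneous bilinear ``SVD'' of the pair: dual eigenbases of $\bu\bv^T$ and $\bv\bu^T$, transported to $\C^m$ by $\bv^T$ and $\bu^T$, and rescaled so that $\alpha_k\beta_k=\mu_k$. The critical points become explicit truncations, and the cross terms vanish by duality. Your sketch of why this normal form exists is sound:
\begin{itemize}
\item $\bv^T\bu(\bv^T\mathbf{p}_k)=\mu_k\bv^T\mathbf{p}_k$;
\item $(\bu^T\mathbf{p}_l')^T(\bv^T\mathbf{p}_k)=\mu_k\delta_{kl}$;
\item $\sum_k\mathbf{p}_k(\mathbf{p}_k')^T=I_n$ gives $\bu=\sum_k\mathbf{p}_k(\bu^T\mathbf{p}_k')^T$, and similarly $\bv=\sum_k\mathbf{p}_k'(\bv^T\mathbf{p}_k)^T$.
\end{itemize}
The direct check that the truncations are critical is immediate. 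For instance, $\bv-\bw=\sum_{l\notin S}\beta_l\mathbf{p}_l'\otimes\mathbf{q}_l$ lies in $\ker(\bz^T)\otimes\ker(\bz)$, which is the bilinear normal space at $\bz$, and symmetrically for $\bu-\bz$ at $\bw$.

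\textbf{What each buys.} The paper's argument is only a few lines. However, it leans on the rather informal identification, inside the proof of Lemma~\ref{matrices}, of $\langle\bz,\bw\rangle_{\R}$ with a sum of eigenvalues.

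Your argument requires constructing the normal form and assuming genericity (distinct nonzero eigenvalues). In return it gives a self-contained verification that the $\binom{n}{r}$ truncations are solutions. They have rank exactly $r$, so they survive the saturation by $I_{X_{\sing}}$. Combined with $\vHD(X)=\binom{n}{r}$, this shows they are all the solutions, and it exhibits the complexified Eckart--Young--Mirsky structure explicitly.

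Your final step, extending the identity from generic to all $(\bu,\bv)$ by polynomiality of the coefficients, goes beyond what the paper proves at this point. It essentially anticipates the product formula of Corollary~\ref{rmatrices}.
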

\begin{proof}
Note that from the proof of Lemma~\ref{matrices} we get that
\begin{equation*}
    \langle\bz,\bv\rangle_{\R}=\langle\bu,\bw\rangle_{\R}=\sum_{k=1}^r\sum_{j=1}^r\lambda_{\bz}^{(k)}\lambda_{\bw}^{(j)}\langle\bz^{(k,1)},\bw^{(j,1)}\rangle_\R\langle\bz^{(k,2)},\bw^{(j,2)}\rangle_\R=\langle\bz,\bw\rangle_{\R}.
\end{equation*}
is the sum of $r$ non zero eigenvalues of $\bu\bv^T$. Let $\lambda_1,\ldots,\lambda_{\min\lbrace n,m\rbrace}$ be the eigenvalues of $\bu\bv^{T}$ or $\bu^T\bv$. Then, if $(\bz,\bw)$ is a solution associated to the eigenvalues $\lambda_{k_1},\ldots,\lambda_{k_r}$ for different $k_1,\ldots,k_r\in[\min\lbrace n,m\rbrace]$, the squared Hermitian distance function takes the value
\begin{equation*}
  \langle\bz,\bw\rangle_{\R}-\langle\bz,\bv\rangle_{\R}-\langle\bu,\bw\rangle_{\R}+\langle\bu,\bv\rangle_{\R}=\sum_{k\in W}\lambda_k-\sum_{k\in W}\lambda_k-\sum_{k\in W}\lambda_k+\sum_{j=1}^{\min\lbrace n,m\rbrace}\lambda_j=\sum_{k\notin W}\lambda_k
\end{equation*}
where $W=\lbrace k_1,\ldots,k_r\rbrace\subseteq[\min\lbrace n,m\rbrace]$ is a subset of cardinality $r$. The claim follows.
\end{proof}

\begin{example}\label{matrix2x}
Let $X=\rk_{1}(\M_{2})\subseteq\M_2$, we show how to proceed to compute the solution of the Hermitian critical ideal of $(\bu,\bv)$ where
\begin{equation*}
  \bu=\begin{bmatrix}
        u_{11} & u_{12}\\
        u_{21} & u_{22}
    \end{bmatrix}\in\M_{2}\quad\text{and}\quad\bv=\begin{bmatrix}
        v_{11} & v_{12}\\
        v_{21} & v_{22}
    \end{bmatrix}\in\M_{2}.
\end{equation*}
We consider the two matrices 
\begin{equation*}
    \bu\bv^T=\begin{bmatrix}
    \bu_{1,\cdot}\bv_{1,\cdot}^T & \bu_{1,\cdot}\bv_{2,\cdot}^T\\
    \bu_{2,\cdot}\bv_{1,\cdot}^T & \bu_{2,\cdot}\bv_{2,\cdot}^T
\end{bmatrix}\in\M_2\quad\text{and}\quad\bu^T\bv=\begin{bmatrix}
    \bu_{\cdot,1}^T\bv_{\cdot,1} & \bu_{\cdot,1}^T\bv_{\cdot,2}\\
    \bu_{\cdot,2}^T\bv_{\cdot,1} & \bu_{\cdot,2}^T\bv_{\cdot,2}\\
\end{bmatrix}\in\M_2.
\end{equation*}
These matrices have two equal eigenvalues
\begin{equation*}
    \lambda^{\pm}=\frac{\Tr(\bu\bv^{T})\pm\sqrt{(\bu_{1,\cdot}\bv^T_{1,\cdot}-\bu_{2,\cdot}\bv_{2,\cdot}^T)^2+4\bu_{1,\cdot}\bv_{2,\cdot}^T\bu_{2,\cdot}\bv_{1,\cdot}^T}}{2}
\end{equation*}
with associate eigenvectors
\begin{equation*}
    \bz^{(\pm,1)}=\begin{bmatrix}
        \lambda^{\pm}-2\bu_{2,\cdot}\bv_{2,\cdot}^T\\
        2\bu_{2,\cdot}\bv_{1,\cdot}^T
    \end{bmatrix}\in\C^2\quad\text{and}\quad\bz^{(\pm,2)}=\begin{bmatrix}
        \lambda^{\pm}-2\bu_{\cdot,2}^T\bv_{\cdot,2}\\
        2\bu_{\cdot,2}^T\bv_{\cdot,1}
    \end{bmatrix}\in\C^2
\end{equation*}
respectively. Then we compute the vectors $\bw^{(\pm,1)}=\bv\bz^{(\pm,2)}\in\C^2$ and $\bw^{(\pm,2)}=\bv^T\bz^{(\pm,1)}\in\C^2$ and the scalars
\begin{equation*}
    \lambda_{\bz}^{\pm}=\frac{(\bw^{(\pm,2)})^T\bu\bw^{(1),\pm}}{(\bz^{(\pm,1)})^T\bw^{(\pm,1)}(\bz^{(\pm,2)})^T\bw^{(\pm,2)}}\qquad\text{and}\qquad\lambda_{\bw}^{\pm}=\frac{(\bz^{(\pm,2)})^T\bv\bz^{(1),\pm}}{(\bz^{(\pm,1)})^T\bw^{(\pm,1)}(\bz^{(\pm,2)})^T\bw^{(\pm,2)}}
\end{equation*}
to get the four matrices $\lambda_{\bz}^{\pm}\bz^{(\pm,1)}\otimes\bz^{(\pm,2)}\in\M_2$ and $\lambda_{\bw}^{\pm}\bw^{(\pm,2)}\otimes\bw^{(\pm,2)}\in\M_2$ which yields the two solutions of the Hermitian critical ideal
\begin{equation*}
    \left(\lambda_{\bz}^{+}\bz^{(+,1)}\otimes\bz^{(+,2)},\lambda_{\bw}^{+}\bw^{(+,1)}\otimes\bw^{(+,2)}\right)\quad\text{and}\quad\left(\lambda_{\bz}^{-}\bz^{(-,1)}\otimes\bz^{(-,2)},\lambda_{\bw}^{-}\bw^{(-,1)}\otimes\bw^{(-,2)}\right).
\end{equation*}
\end{example}

\begin{corollary}\label{rmatrices}
The zero locus of the polynomial discriminant $\Delta_{t^2}\HDp_{X,\bu}(t^2)$ defines the subset of $\M_{n}^{m}$ of matrices that admit two different sums of $\min\lbrace n,m\rbrace-r$ of their singular values to have the same result. If $r=\min\lbrace n,m\rbrace-1$ then
\begin{equation*}
    \begin{cases}
        \vHDp_{X,\bu,\bv}(t^2)=\det(t^2I_n-\bu\bv^{T}) &\text{when $n\leq m$,}\\
        \vHDp_{X,\bu,\bv}(t^2)=\det(t^2I_m-\bv^T\bu) & \text{when $n\geq m$,}
    \end{cases}
\end{equation*}
while if $r=1$ then
\begin{equation*}
    \begin{cases}
        \vHDp_{X,\bu,\bv}(t^2)=\det\left((\Tr(\bu\bv^T)-t^2)I_n-\bv\bu^{T}\right) &\text{when $n\leq m$,}\\
        \vHDp_{X,\bu,\bv}(t^2)=\det\left((\Tr(\bu\bv^T)-t^2)I_m-\bu\bv^{T}\right) & \text{when $n\geq m$.}
    \end{cases}
\end{equation*}
In general, if $\lambda_1,\ldots,\lambda_{\min\lbrace n,m\rbrace}$ are the eigenvalues of $\bu\bv^{T}$ if $n\leq m$ or of $\bv\bu^T$ if $n\geq m$, then 
\begin{equation*}
    \vHDp_{X,\bu,\bv}(t^2)=\prod\limits_{\text{$W\subseteq[\min\lbrace n,m\rbrace]$ of cardinality $\min\lbrace n,m\rbrace-r$}}\left(t^2-\sum_{k\notin W}\lambda_k\right).
\end{equation*}
\end{corollary}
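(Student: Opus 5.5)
The plan is to read Corollary~\ref{rmatrices} off Proposition~\ref{hddegmatrix}, using the definition of the vHD polynomial from \cite[Section 5]{f2025}. That polynomial is the monic polynomial in $t^2$ whose roots are the values of the squared Hermitian distance $\langle\bz-\bu,\bw-\bv\rangle_{\R}$ at the $\vHD(X)$ solutions of the Hermitian critical ideal of $(\bu,\bv)$.

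\textbf{General product formula.} By Lemma~\ref{matrices}, for generic $(\bu,\bv)$ there are exactly $\binom{\min\lbrace n,m\rbrace}{r}$ solutions. They are indexed by the $r$-subsets $S\subseteq[\min\lbrace n,m\rbrace]$ of eigenvectors of $\bu\bv^T$. The computation in the proof of Proposition~\ref{hddegmatrix} shows that the solution attached to $S$ gives the value $\sum_{k\notin S}\lambda_k$. Since the vHD polynomial is monic of degree $\vHD(X)$, we get
\[
\vHDp_{X,\bu,\bv}(t^2)=\prod_{S}\Bigl(t^2-\sum_{k\notin S}\lambda_k\Bigr).
\]
Reindexing by $W=S$ (or by the complement, depending on the convention in the displayed formula) gives the stated product. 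One must check that the indexing set has cardinality $\binom{\min\lbrace n,m\rbrace}{r}=\binom{\min\lbrace n,m\rbrace}{\min\lbrace n,m\rbrace-r}$, so the degree matches Lemma~\ref{matrices}. For $n\geq m$ we pass to transposes, using that $\bu^T\bv$ and $\bv\bu^T$ share their nonzero eigenvalues (and likewise $\bu\bv^T$ and $\bv^T\bu$).

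\textbf{Special cases.} For $r=\min\lbrace n,m\rbrace-1$ the complement of $S$ is a single index, so the roots are exactly the eigenvalues $\lambda_k$. The product is then the characteristic polynomial $\det(t^2I-\bu\bv^T)$, with $\bv^T\bu$ used in the case $n\geq m$. For $r=1$ the roots are $\Tr(\bu\bv^T)-\lambda_k$, and $\prod_k(t^2-\Tr+\lambda_k)$ equals, up to the sign $(-1)^{\min\lbrace n,m\rbrace}$ absorbed by normalisation, $\det((\Tr(\bu\bv^T)-t^2)I-\bv\bu^T)$. Here we use again that $\bv\bu^T$ and $\bu\bv^T$ have the same spectrum up to zeros, together with the trace identity $\Tr(\bu\bv^T)=\sum\lambda_k$.

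\textbf{Discriminant statement.} We specialise $\bv=\bar\bu$ to obtain $\HDp_{X,\bu}$. Then $\bu\bv^T=\bu\bu^H$, whose eigenvalues are the squared singular values $\sigma_k^2$. The discriminant in $t^2$ vanishes exactly when two roots coincide, that is, when two distinct $r$-subsets have equal complementary sums. This is the claimed locus; the phrase ``sums of singular values'' should be read as sums of squared singular values, consistent with Remark~\ref{matrixnorm}.

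\textbf{Main obstacle.} The delicate point is justifying that the vHD polynomial is determined by these root values with the right multiplicities. That requires each solution of Lemma~\ref{matrices} to be simple for generic $(\bu,\bv)$. I would argue this through genericity of distinct eigenvalues of $\bu\bv^T$: the solutions are then distinct, and their number already equals $\vHD(X)$, so no extra multiplicity can occur. The remaining issue is the bookkeeping of transposes and of the eigenvalue sets in the cases $n\le m$ versus $n\ge m$.
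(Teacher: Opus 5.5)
Your route is the paper's: the corollary has no separate proof and is meant to be read off from Proposition~\ref{hddegmatrix} and Lemma~\ref{matrices}. Your computation of the roots and of the two special cases is right. That includes the sign $(-1)^{\min\lbrace n,m\rbrace}$ for $r=1$ and reading the discriminant locus as sums of \emph{squared} singular values.

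One step you assert fails, though. No reindexing turns your product over $r$-subsets $S$ into the displayed general product. The display takes $|W|=\min\lbrace n,m\rbrace-r$ together with $\sum_{k\notin W}\lambda_k$, so its roots are sums of $r$ eigenvalues. For example, with $\min\lbrace n,m\rbrace=3$ and $r=2$ it gives $\lambda_2+\lambda_3$, $\lambda_1+\lambda_3$, $\lambda_1+\lambda_2$. But the first special case of the same corollary and Proposition~\ref{hddegmatrix} give the roots $\lambda_1,\lambda_2,\lambda_3$. Taking $W=S$ has the wrong cardinality, and taking $W=S^c$ turns $\notin$ into $\in$. Checking $\binom{\min\lbrace n,m\rbrace}{r}=\binom{\min\lbrace n,m\rbrace}{\min\lbrace n,m\rbrace-r}$ only matches the degree, not the roots. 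What you actually prove is the product over $|W|=r$, as in the proof of Proposition~\ref{hddegmatrix}. You should state that the display needs this correction rather than claim agreement ``depending on the convention''.

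The case $n\geq m$ needs the same care. The $r=1$ display pairs $I_m$ with the $n\times n$ matrix $\bu\bv^T$. ``Same spectrum up to zeros'' does not repair this: an $n\times n$ matrix contributes $n-m$ spurious factors $\Tr(\bu\bv^T)-t^2$, giving the wrong degree. You must use the $m\times m$ matrix $\bu^T\bv$ (or $\bv^T\bu$), both there and in the general formula, where $\bv\bu^T$ likewise carries $n-m$ extra zero eigenvalues.

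Finally, in your ``main obstacle'' paragraph the condition that identifies the vHD polynomial with the product is that the $\binom{\min\lbrace n,m\rbrace}{r}$ subset sums are pairwise distinct for generic $(\bu,\bv)$. If two coincided, an eliminant would see that root only once. Distinct eigenvalues do not suffice, since $1+4=2+3$. Distinctness of the sums does hold on a Zariski-open set. The identity then extends to all $(\bu,\bv)$ because the product is symmetric in the $\lambda_k$, hence polynomial in the entries of $\bu$ and $\bv$.
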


\begin{example}\label{2matrix2x}
Let $X=\rk_{1}(\M_{2}^{m})\subseteq\M_2^{m}$, then
\begin{align*}
    \vHDp_{X,\bu,\bv}(t^2)=\det(t^2I_2-\bu\bv^T)=t^4-\langle\bu,\bv\rangle_{\R} t^2+\det(\bu\bv^T).
\end{align*}
The HD polynomial then is
\begin{equation*}
  \HDp_{X,\bu}(t^2)=t^4-\|\bu\|_{\C}^2t^2+\det(\bu\bu^H)
\end{equation*}
and since $\Delta_{t^2}\HDp_{X,\bu}(t^2)=\|\bu\|_{\C}^4-4\det(\bu\bu^H)\geq 0$ by the inequality of arithmetic and geometric means, we obtain two critical points if and only if $\det(\bu\bu^H)\neq 0$ or equivalently $\bu$ has two positive singular values.

Let now $m=2$, the vHD discriminant is defined by the polynomial
\begin{equation*}
    \Delta_{t^2}\vHDp_{X,\bu,\bv}(t^2)=\langle\bu,\bv\rangle_{\R}^2-4|\det(\bu\bv^T)|.
\end{equation*}
Setting $\bv=\bar{\bu}$, the inequality $\det(\bu\bu^H)\neq 0$ is equivalent to $\det(\bu)\neq 0$ and the two critical points have same distance if and only if the generalized polynomial defining the HD discriminant
\begin{align*}
\Delta_{t^2}\HDp_{X,\bu}(t^2)&=\left(\|\bu\|_{\C}^2+2|\det(\bu)|\right)\left(\|\bu\|_{\C}^2-2|\det(\bu)|\right)
\end{align*}
vanishes. If $\bu$ is real it becomes 
\begin{equation*}
\Delta_{t^2}\HDp_{X,\bu}(t^2)=\left((u_{11}+u_{22})^2+(u_{12}-u_{21})^2\right)\left((u_{11}-u_{22})^2+(u_{12}+u_{21})^2\right).
\end{equation*}
\end{example}

\begin{remark}
Note that, in Example~\ref{2matrix2x}, the generalized polynomial defining the HD discriminant is non negative. The fact that this polynomial does not change sign could have been anticipated by Proposition~\ref{matrices} considering \cite[Corollary 4.3]{f2025}. Also, the discriminant of the HD polynomial is non negative. We will see this is not a special case in Subsection~\ref{ssec:parl}.
\end{remark}

%------------------------------------------------------------------------------------------

\subsection{Symmetric matrices}
We investigate here $\rk_{r}(\Sy_{n})$ the subvariety of $\M_{n}$ of symmetric matrices with rank bounded by some $r\in[\min\lbrace n,m\rbrace]$. 

Set $X=\rk_{r}(\Sy_{n})$ in this subsection. Similarly to the general case, it holds the equality $\HD(X)=\lbrace\binom{n}{r}\rbrace$. We now compute the number of solutions of the Hermitian critical ideal.

\begin{lemma}\label{symmatrices}
Let $\bu,\bv\in\M_{n}$ be such that $\rk((\bu+\bu^T)(\bv+\bv^T))\geq r$, the number of solutions of the Hermitian critical ideal of $(\bu,\bv)$ is $\binom{\rk((\bu+\bu^T)(\bv+\bv^T))}{r}$. In particular, $\vHD(X)=\binom{n}{r}$.
\end{lemma}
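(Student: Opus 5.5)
The plan is to follow the proof of Lemma~\ref{matrices}, now imposing symmetry. A point of $X=\rk_r(\Sy_n)$ and a point of $\overline{X}$ will be written as
\begin{equation*}
\bz=\sum_{k=1}^r\lambda_{\bz}^{(k)}\bz^{(k)}\otimes\bz^{(k)},\qquad \bw=\sum_{j=1}^r\lambda_{\bw}^{(j)}\bw^{(j)}\otimes\bw^{(j)},
\end{equation*}
with $\bz^{(k)},\bw^{(j)}\in\C^n$ and $\lambda_{\bz}^{(k)},\lambda_{\bw}^{(j)}\neq0$. Away from the singular locus, which is removed by the saturation in \eqref{cond}, this parametrization is locally dominant. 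We then expand $\langle\bz-\bu,\bw-\bv\rangle_{\R}$ as in the general case. The first point to note is that $\langle\bz^{(k)}\otimes\bz^{(k)},\bv\rangle_{\R}=(\bz^{(k)})^T\bv\,\bz^{(k)}$ depends only on the symmetric part of $\bv$. So $\bu,\bv$ may be replaced by $\tfrac12(\bu+\bu^T)$ and $\tfrac12(\bv+\bv^T)$. This explains why only $(\bu+\bu^T)(\bv+\bv^T)$ enters the statement.

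Next I would differentiate with respect to $z^{(k)}_\ell$ and $w^{(j)}_\ell$. This gives the symmetric analogues of \eqref{Eigen1} and \eqref{Eigen2}:
\begin{equation*}
(\bu+\bu^T)\bw^{(j)}=2\sum_{k}\lambda_{\bz}^{(k)}\langle\bz^{(k)},\bw^{(j)}\rangle_{\R}\bz^{(k)},\qquad (\bv+\bv^T)\bz^{(k)}=2\sum_{j}\lambda_{\bw}^{(j)}\langle\bz^{(k)},\bw^{(j)}\rangle_{\R}\bw^{(j)}.
\end{equation*}
From the second equation, $\mathrm{span}\{\bw^{(j)}\}$ is the image of $\mathrm{span}\{\bz^{(k)}\}$ under $\bv+\bv^T$. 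Substituting into the first equation shows that $(\bu+\bu^T)(\bv+\bv^T)$ maps $\mathrm{span}\{\bz^{(k)}\}$ into itself. After a change of basis inside the span, which is the same ``no essential difference'' argument as in Lemma~\ref{matrices}, the span must be an $r$-dimensional invariant subspace spanned by eigenvectors. For generic $(\bu,\bv)$ the nonzero eigenvalues are distinct, so such subspaces correspond exactly to $r$-subsets of the nonzero eigenvalues.

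Once the subspace is fixed, I would recover the $\bw^{(j)}$ as $(\bv+\bv^T)\bz^{(k)}$. The scalars $\lambda_{\bz}^{(k)},\lambda_{\bw}^{(j)}$ then come from the linear systems obtained by contracting the critical equations with $(\bz^{(k)})^T$ and $(\bw^{(j)})^T$, exactly as in the general case. This produces one pair $(\bz,\bw)$ for each $r$-subset of nonzero eigenvalues, so the count is $\binom{\rk((\bu+\bu^T)(\bv+\bv^T))}{r}$. Since $\bu+\bu^T$ and $\bv+\bv^T$ are generic symmetric matrices, they are invertible on a Zariski-open set of $\V^2$, and there the count is $\binom nr$, which gives $\vHD(X)=\binom nr$.

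I expect the main obstacle to be showing that each choice of invariant subspace gives exactly one solution, and not zero or a positive-dimensional family. This needs the linear system for the scalars to be nondegenerate and the resulting point to lie outside $X_{\sing}$, meaning it has rank exactly $r$. I would handle this by diagonalizing $(\bu+\bu^T)(\bv+\bv^T)$. The eigenvectors are orthogonal with respect to the bilinear form $\langle\cdot,(\bv+\bv^T)\cdot\rangle$, which should make the system diagonal and show that $\lambda_{\bz}^{(k)}\neq0$ generically.
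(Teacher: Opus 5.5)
Your proposal is correct and follows essentially the same route as the paper's proof. Both use the same symmetric rank-$r$ parametrization and the same pair of critical equations, reduce to $r$-dimensional invariant subspaces of $(\bu+\bu^T)(\bv+\bv^T)$ spanned by eigenvectors, and recover the $\bw^{(j)}$ and the scalars from linear systems. Your plan to use the $(\bv+\bv^T)$-orthogonality of eigenvectors with distinct eigenvalues to make those systems diagonal is a useful sharpening of the paper's ``no essential difference'' step. As in the paper, the argument gives the count only for generic $(\bu,\bv)$ with distinct eigenvalues; for $\bu=\bv=I_n$, every rank-$r$ complex symmetric idempotent $\bz=\bw$ is a solution. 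That generic count is exactly what is needed for $\vHD(X)=\binom{n}{r}$.
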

\begin{proof}
The proof is analogous to the proof Lemma~\ref{matrices}, we report the fundamental steps for clarity. Consider a point $(\bz,\bw)\in X\times\overline{X}$ and the formulations
\begin{equation*}
   \bz=\sum_{k=1}^r\lambda_{\bz}^{(k)}(\bz^{(k)})^{\otimes 2}\qquad\text{and}\qquad\bw=\sum_{j=1}^r\lambda_{\bw}^{(j)}(\bw^{(j)})^{\otimes 2}
\end{equation*}
where $\lbrace\bz^{(k)}\rbrace_{k=1}^r,\lbrace\bw^{(j)}\rbrace_{j=1}^r\subseteq\C^n$ and $0\neq\lambda_{\bz}^{(k)},\lambda_{\bw}^{(j)}\in\C$ for $k,j=1,\ldots,r$. We rewrite rewrite the inner product $\langle\bz-\bu,\bw-\bv\rangle_{\R}$ and consider its derivatives with respect to any $z_{\ell}^{(k)}$ and $w_{\ell}^{(j)}$ to get the conditions
\begin{alignat}{2}
    (\bu+\bu^T)\bw^{(j)}&=2\sum_{k=1}^r\lambda_{\bz}^{(k)}\langle\bz^{(k)},\bw^{(j)}\rangle_{\R}\bz^{(k)},\notag\\
    (\bv+\bv^T)\bz^{(k)}&=2\sum_{j=1}^r\lambda_{\bw}^{(j)}\langle\bz^{(k)},\bw^{(j)}\rangle_{\R}\bw^{(j)}.\label{Eigen2.1}
\end{alignat}
From equations \eqref{Eigen2.1}, by knowing $\lbrace\bz^{(k)}\rbrace_{k=1}^r$, we are able to choose $\lbrace\bw^{(j)}\rbrace_{j=1}^r$ basis of the subspace given by the images under $\bv+\bv^T$. There is no essential difference in the choice of this basis. On the other hand, by applying the map $\bu+\bu^T$ to equations \eqref{Eigen2.1}, we get
\begin{equation*}
  (\bu+\bu^T)(\bv+\bv^T)\bz^{(\ell)}=4\sum_{k=1}^r\left(\sum_{j=1}^r\lambda_{\bz}^{(k)}\lambda_{\bw}^{(j)}\langle\bz^{(k)},\bw^{(j)}\rangle_\R^2\right)\bz^{(k)},
\end{equation*}
for $\ell=1,\ldots,r$. Thus, we need to choose a collection $\lbrace\bz^{(k)}\rbrace_{k=1}^r$ which generates a $r$ dimensional eigenspace of $(\bu+\bu^T)(\bv+\bv^T)/4$. Multiplying on the left the equations \eqref{Eigen1} and \eqref{Eigen2} by $(\bw^{(j)})^T$ and $(\bz^{(k)})^T$ respectively, we get the equalities
\begin{align*}
    \langle(\bz^{(k)})^{\otimes 2},\bv+\bv^T\rangle_{\R}&=2\sum_{j=1}^r\lambda_{\bw}^{(j)}\langle\bz^{(k)},\bw^{(j)}\rangle_\R^2\\
    \langle\bu+\bu^T,(\bw^{(j)})^{\otimes 2}\rangle_{\R}&=2\sum_{k=1}^r\lambda_{\bz}^{(k)}\langle\bz^{(k)},\bw^{(k)}\rangle_\R^2,
\end{align*}
and we compute the scalars $\lambda_{\bz}^{(k)},\lambda_{\bw}^{(j)}$ for $k,j=1,\ldots,r$ by solving the linear systems provided by these last equations. At the end, we have $\binom{\rk((\bu+\bu^T)(\bv+\bv^T))}{r}$ essentially different possible choices for the collection $\lbrace\bz^{(k)}\rbrace_{k=1}^r$ of eigenvectors of $(\bu+\bu^T)(\bv+\bv^T)/4$ which in turn permits to compute the other vectors and the scalars.
\end{proof}

We list a collection of results that are similar to the general case.

\begin{proposition}
The roots in $t^2$ of the vHD polynomial $\vHDp_{X,\bu,\bv}(t^2)$ are the sums of $n-r$ non zero eigenvalues of $(\bu+\bu^T)(\bv+\bv^T)/4$.
\end{proposition}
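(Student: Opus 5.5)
The argument follows the proof of Proposition~\ref{hddegmatrix}, using the solution description from the proof of Lemma~\ref{symmatrices}. Throughout, let $\mu_1,\ldots,\mu_n$ be the eigenvalues of $M\coloneqq(\bu+\bu^T)(\bv+\bv^T)/4$. The vHD polynomial has as roots in $t^2$ the values of the squared distance $\langle\bz-\bu,\bw-\bv\rangle_{\R}$ at the solutions $(\bz,\bw)$ of the Hermitian critical ideal of $(\bu,\bv)$. It therefore suffices to evaluate this quantity at each solution.

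\textbf{Step 1: the cross terms at a solution.} Take the solution attached to an $r$-element set $W$ of eigenvalues $\mu_k$, $k\in W$. Pair the left equation of \eqref{Eigen2.1} with $\bw^{(j)}$, weight by $\lambda_{\bw}^{(j)}$, and sum over $j$. Since $\langle\bu,\bw\rangle_{\R}=\frac12\langle\bu+\bu^T,\bw\rangle_{\R}$ for symmetric $\bw$, this gives
\begin{equation*}
\langle\bu,\bw\rangle_{\R}=\sum_{k,j}\lambda_{\bz}^{(k)}\lambda_{\bw}^{(j)}\langle\bz^{(k)},\bw^{(j)}\rangle_{\R}^2=\langle\bz,\bw\rangle_{\R}.
\end{equation*}
The symmetric computation with the second equation gives $\langle\bz,\bv\rangle_{\R}=\langle\bz,\bw\rangle_{\R}$.

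\textbf{Step 2: identify the common value.} Write $c_{kj}=\langle\bz^{(k)},\bw^{(j)}\rangle_{\R}^2$. The eigen-relation in the proof of Lemma~\ref{symmatrices} says that $M$ acts on $\mathrm{span}\{\bz^{(k)}\}$ through the $r\times r$ matrix $\bigl(\sum_j\lambda_{\bz}^{(k)}\lambda_{\bw}^{(j)}c_{kj}\bigr)$, after normalizing the factor $4$. Its trace, $\sum_{k,j}\lambda_{\bz}^{(k)}\lambda_{\bw}^{(j)}c_{kj}=\langle\bz,\bw\rangle_{\R}$, is then $\sum_{k\in W}\mu_k$. Getting this right requires rewriting that relation with the correct indexing: $M\bz^{(\ell)}$ should be expressed in the basis $\{\bz^{(k)}\}$ with coefficients depending on $\ell$, where the display in the proof of Lemma~\ref{symmatrices} is loose. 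Once this is done, the restriction of $M$ to the invariant subspace has exactly the eigenvalues $\mu_k$, $k\in W$.

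\textbf{Step 3: the constant term.} We have $\langle\bu,\bv\rangle_{\R}=\Tr(\bu\bv^T)$. This must be compared with $\Tr M=\sum_{k=1}^n\mu_k=\frac14\Tr\bigl((\bu+\bu^T)(\bv+\bv^T)\bigr)$. The identity $\Tr M=\langle\bu,\bv\rangle_{\R}$ holds only after replacing $\bu,\bv$ by their symmetric parts $\bu_s,\bv_s$. This replacement is harmless: the antisymmetric part of $\bu$ is orthogonal to $\bz$ and to $\bz-\bu_s$, so $q_{\bu}$ and $q_{\bu_s}$ differ on $X$ by a constant. The same holds for $\bv$. Hence we may assume $\bu,\bv$ symmetric, and then $\langle\bu,\bv\rangle_{\R}=\Tr(\bu\bv)=\Tr M$.

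\textbf{Conclusion and main obstacle.} Combining the three steps,
\begin{equation*}
\langle\bz-\bu,\bw-\bv\rangle_{\R}=\Bigl(\sum_{k\in W}\mu_k\Bigr)-2\sum_{k\in W}\mu_k+\sum_{k=1}^n\mu_k=\sum_{k\notin W}\mu_k,
\end{equation*}
which is a sum of $n-r$ eigenvalues. The restriction to nonzero eigenvalues comes from Lemma~\ref{symmatrices}, where the admissible $W$ are chosen among nonzero eigenvalues, so zero eigenvalues always fall in the complement. The main obstacle is the bookkeeping in Step 2 and the constant-term normalization in Step 3; the other steps are direct substitutions.
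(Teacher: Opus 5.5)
Your route is the one the paper intends. The paper gives no separate proof; it only says the result is ``similar to the general case'', i.e.\ the computation in the proof of Proposition~\ref{hddegmatrix} applied to the solutions of Lemma~\ref{symmatrices}. Steps~1 and~2 are correct, including your repair of the indexing in the eigen-relation: the matrix of $M$ on $\mathrm{span}\{\bz^{(k)}\}$ has trace $\sum_{k,j}\lambda_{\bz}^{(k)}\lambda_{\bw}^{(j)}c_{kj}=\langle\bz,\bw\rangle_{\R}$.

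The gap is Step~3. You correctly see that $\langle\bu,\bv\rangle_{\R}=\Tr(\bu\bv^T)$ need not equal $\Tr M$. But passing to symmetric parts is not harmless here. It keeps the critical points, yet shifts every critical value by the same constant, and the roots of $\vHDp_{X,\bu,\bv}(t^2)$ are exactly these values. Set $\bu_a=(\bu-\bu^T)/2$ and $\bv_a=(\bv-\bv^T)/2$. By the orthogonality of symmetric and antisymmetric matrices,
\[
\langle\bz-\bu,\bw-\bv\rangle_{\R}=\langle\bz-\bu_s,\bw-\bv_s\rangle_{\R}+\langle\bu_a,\bv_a\rangle_{\R},\qquad \langle\bu_a,\bv_a\rangle_{\R}=\tfrac12\bigl(\Tr(\bu\bv^T)-\Tr(\bu\bv)\bigr)=\langle\bu,\bv\rangle_{\R}-\Tr M .
\]
So your Steps~1 and~2, with no replacement, give $\sum_{k\notin W}\mu_k+\langle\bu_a,\bv_a\rangle_{\R}$, not $\sum_{k\notin W}\mu_k$.

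Here is a concrete check. Take $n=2$, $r=1$, and $\bu=\bv=\begin{bmatrix}a&1\\-1&b\end{bmatrix}$ with $a,b$ real, $ab\neq 0$, $a^2\neq b^2$. The critical points are $ae_1e_1^T$ and $be_2e_2^T$, with squared distances $b^2+2$ and $a^2+2$. However, $M=\mathrm{diag}(a^2,b^2)$, so the statement predicts roots $a^2$ and $b^2$.

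So the statement holds exactly when $\langle\bu_a,\bv_a\rangle_{\R}=0$, in particular for $\bu,\bv\in\Sy_n$. That is the setting the paper effectively uses (the $\hat{X}_2$ example and the corollary comparing with $\rk_r(\M_n)$). There $\langle\bu,\bv\rangle_{\R}=\Tr(\bu\bv)=\Tr M$ holds directly, no replacement is needed, and your argument is complete. For data in $\M_n$ you must either add that hypothesis or state the shifted roots. The paper's ``similar to the general case'' passes over this point silently, whereas your Step~3 explicitly asserts the shift away, which is false.

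A minor remark: your closing justification of ``non zero'' is backwards. The eigenvalues outside $W$ are exactly the ones summed in the root, so placing zero eigenvalues in the complement puts them into the root. For generic $(\bu,\bv)$ the matrix $M$ is invertible and the qualifier is vacuous.
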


\begin{corollary}\label{rsmatrices}
The zero locus of the polynomial discriminant $\Delta_{t^2}\HDp_{X,\bu}(t^2)$ defines the subset of $\M_{n}$ of matrices $\bu$ such that the matrix $\bu+\bu^T$ admits two different sums of $n-r$ of its singular values to have the same result. If $r=n-1$ then
\begin{equation*}
    \vHDp_{X,\bu,\bv}(t^2)=\det\left(t^2I_n-\frac{(\bu+\bu^{T})(\bv+\bv^{T})}{4}\right)
\end{equation*}
while if $r=1$ then
\begin{equation*}
    \vHDp_{X,\bu,\bv}(t^2)=\det\left(\left(\Tr\left(\frac{(\bu+\bu^{T})(\bv+\bv^{T})}{4}\right)-t^2\right)I_n-\frac{(\bv+\bv^{T})(\bu+\bu^{T})}{4}\right)
\end{equation*}
In general, if $\lambda_1,\ldots,\lambda_{\min\lbrace n,m\rbrace}$ are the eigenvalues of $(\bu+\bu^{T})(\bv+\bv^{T})/4$, then 
\begin{equation*}
    \vHDp_{X,\bu,\bv}(t^2)=\prod\limits_{\text{$W\subseteq[n]$ of cardinality $n-r$}}\left(t^2-\sum_{k\notin W}\lambda_k\right).
\end{equation*}
\end{corollary}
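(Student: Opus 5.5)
The plan is to follow the argument for Corollary~\ref{rmatrices}, using the symmetric proposition just stated (the analogue of Proposition~\ref{hddegmatrix}). By the proof of Lemma~\ref{symmatrices}, for generic $(\bu,\bv)$ the solutions of the Hermitian critical ideal are indexed by $r$-subsets of eigenvectors of $(\bu+\bu^T)(\bv+\bv^T)/4$. At such a solution $(\bz,\bw)$ we have
\begin{equation*}
\langle\bz,\bv\rangle_{\R}=\langle\bu,\bw\rangle_{\R}=\langle\bz,\bw\rangle_{\R}=\sum_{k\in W^c}\lambda_k .
\end{equation*}
The first step is to verify this by pairing the equations \eqref{Eigen2.1} with $\bz^{(k)}$ and $\bw^{(j)}$, exactly as in the general case. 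Since $\bz$ is symmetric, $\langle\bz,\bv\rangle_{\R}=\langle\bz,(\bv+\bv^T)/2\rangle_{\R}$. We also need $\langle\bu,\bv\rangle_{\R}$ to be replaced by the total trace $\Tr((\bu+\bu^T)(\bv+\bv^T)/4)=\sum_k\lambda_k$. This holds after projecting $\bu,\bv$ orthogonally onto $\Sy_n$, which changes the distance function only by a constant.

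Substituting into the squared distance $\langle\bz,\bw\rangle_{\R}-\langle\bz,\bv\rangle_{\R}-\langle\bu,\bw\rangle_{\R}+\langle\bu,\bv\rangle_{\R}$ gives $\sum_{k\in W}\lambda_k$, where $W$ ranges over the $(n-r)$-subsets of $[n]$. This is the product formula, because the vHD polynomial is by definition the monic polynomial whose roots in $t^2$ are these critical values, each counted once per solution.

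The special cases follow by symmetric-function identities.
\begin{itemize}
\item For $r=n-1$, $W$ is a singleton, so the product is $\prod_k(t^2-\lambda_k)=\det(t^2I_n-A)$ with $A=(\bu+\bu^T)(\bv+\bv^T)/4$.
\item For $r=1$, write $t^2-\sum_{k\in W}\lambda_k=t^2-\Tr(A)+\lambda_j$ with $W=[n]\setminus\{j\}$. Up to the sign $(-1)^n$ the product is $\prod_j(\Tr(A)-t^2-\lambda_j)=\det((\Tr(A)-t^2)I_n-A')$, where $A'=(\bv+\bv^T)(\bu+\bu^T)/4$ has the same spectrum as $A$.
\end{itemize}

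For the discriminant statement, specialize $\bv=\bar\bu$. Then $\bu+\bu^T$ is complex symmetric, and $(\bu+\bu^T)\overline{(\bu+\bu^T)}=SS^H$ with $S=\bu+\bu^T$. Its eigenvalues are the squared singular values of $S$, up to the factor $1/4$; this is the Takagi factorization. The discriminant $\Delta_{t^2}$ vanishes exactly when two roots coincide, i.e. when two distinct $(n-r)$-sums agree. I read the corollary's "singular values" as referring to these eigenvalues $\sigma_k^2/4$ of $A$.

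The main obstacle is the bookkeeping showing that the constant term matches the full trace. This requires replacing $\bu,\bv$ by their symmetrizations and keeping the factor $1/4$ consistent, since $\langle\bu,\bv\rangle_{\R}$ for non-symmetric $\bu,\bv$ differs from $\Tr(A)$. I would handle it by checking directly that $\langle\bu,\bv\rangle_{\R}-\langle\bu_s,\bv_s\rangle_{\R}$, with $\bu_s=(\bu+\bu^T)/2$, is independent of $(\bz,\bw)$, so it only translates $t^2$. I would then define the vHD polynomial through the symmetrized data.
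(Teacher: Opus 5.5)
Your proposal is correct and follows the route the paper intends. The paper states this corollary without proof, as the symmetric analogue of Corollary~\ref{rmatrices}: the roots in $t^2$ are the sums of $n-r$ eigenvalues of $(\bu+\bu^T)(\bv+\bv^T)/4$ (the analogue of Proposition~\ref{hddegmatrix}), the cases $r=n-1$ and $r=1$ are symmetric-function identities, and the discriminant claim comes from setting $\bv=\bar\bu$.

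The caveats you raise are corrections to the printed statement, not gaps in your argument. The displayed formulas hold verbatim only when $\langle\bu-\bu^T,\bv-\bv^T\rangle_{\R}=0$, e.g.\ for $\bu,\bv\in\Sy_n$. In general every root in $t^2$ is shifted by $\langle\bu-\bu^T,\bv-\bv^T\rangle_{\R}/4$, which leaves the discriminant unchanged, so your ``redefinition through symmetrized data'' is really a restriction of the statement. The $r=1$ formula holds only up to the sign $(-1)^n$, and ``singular values'' must be read as squared singular values. Finally, the product must use $\sum_{k\in W}\lambda_k$ as you do, since the printed $\sum_{k\notin W}\lambda_k$ contradicts both special cases when $n\geq 3$; you should flag this explicitly rather than silently calling your version ``the product formula''.
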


\begin{corollary}
Let $\bu,\bv\in\Sy_n$, the zero locus of the Hermitian critical ideal of $\rk_{r}(\M_{n})$ of $(\bu,\bv)$ coincides with the zero locus of the Hermitian critical ideal of $\rk_{r}(\Sy_{n})$ of $(\bu,\bv)$. In particular, the critical points of the distance function of $\bu$ from $\rk_{r}(\M_{n})$ are all in $\rk_{r}(\Sy_{n})$, i.e.\ the critical points of a symmetric matrix are all symmetric. Moreover, it holds the equality $\vHDp_{\rk_r(\Sy_n),\bu,\bv}(t^2)=\vHDp_{\rk_r(\M_{n}),\bu,\bv}(t^2)$.
\end{corollary}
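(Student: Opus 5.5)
Let $\bu,\bv\in\Sy_n$. The plan is to compare the two critical systems directly, using the eigenvector description obtained in the proofs of Lemma~\ref{matrices} and Lemma~\ref{symmatrices}. Since $\bu^T=\bu$ and $\bv^T=\bv$, we have $(\bu+\bu^T)(\bv+\bv^T)/4=\bu\bv=\bu\bv^T$ and $\bu^T\bv=\bu\bv$. So the relevant matrix of Lemma~\ref{symmatrices} coincides with the matrix of Lemma~\ref{matrices}. In particular both counts equal $\binom{\rk(\bu\bv)}{r}$, which is generically $\binom{n}{r}$.

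First we show that the symmetric solutions are solutions for $\rk_r(\M_n)$. Since $\rk_r(\Sy_n)\subseteq\rk_r(\M_n)$, it suffices to check that a solution $(\bz,\bw)$ of the symmetric critical system satisfies the general criticality conditions \eqref{Eigen1}--\eqref{Eigen2}. We take $\bz^{(k,1)}=\bz^{(k,2)}=\bz^{(k)}$ and $\bw^{(j,1)}=\bw^{(j,2)}=\bw^{(j)}$. With $\bu=\bu^T$ and $\bv=\bv^T$, equations \eqref{Eigen1}--\eqref{Eigen2} collapse to \eqref{Eigen2.1}, where the factor $2$ is absorbed because $\bu+\bu^T=2\bu$.

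Next comes the converse, which is the heart of the argument. In the proof of Lemma~\ref{matrices} a solution of the general system is determined by a choice of $r$ eigenvectors $\bz^{(k,1)}$ of $\bu\bv^T=\bu\bv$. The $\bz^{(k,2)}$ are eigenvectors of $\bu^T\bv=\bu\bv$ with the same eigenvalues, and everything else is forced up to the inessential rescalings. Hence for generic $(\bu,\bv)$ (distinct eigenvalues) we may take $\bz^{(k,2)}=\bz^{(k,1)}$, so $\bz$ is a sum of symmetric rank-one terms and is symmetric. Similarly $\bw^{(j,1)}=\bv\bz^{(j,2)}=\bv\bz^{(j,1)}=\bv^T\bz^{(j,1)}=\bw^{(j,2)}$, so $\bw$ is symmetric. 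Therefore each of the $\binom{\rk(\bu\bv)}{r}$ general solutions is one of the symmetric solutions, and the zero loci coincide. For nongeneric $(\bu,\bv)$ we still get $\bz=\bz^T$ directly from the critical equations: transposing \eqref{Eigen1} shows that $\bz^T$ satisfies the same system, and uniqueness within the chosen eigenspace decomposition gives equality. This step is the main obstacle, since it requires that the eigenvector data determine the solution uniquely, which is only implicit in the proof of Lemma~\ref{matrices}. I would handle it by first working on the Zariski-open set of distinct eigenvalues and then extending by closure.

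Finally we specialize and read off the remaining claims. Setting $\bv=\bar{\bu}$ with $\bu$ symmetric shows that all critical points of $q_{\bu}$ on $\rk_r(\M_n)$ are symmetric. For the equality of polynomials, Proposition~\ref{hddegmatrix} and Corollary~\ref{rsmatrices} express both vHD polynomials as
\[
\prod_{W}\left(t^2-\sum_{k\notin W}\lambda_k\right),
\]
where the $\lambda_k$ are the eigenvalues of the same matrix $\bu\bv=(\bu+\bu^T)(\bv+\bv^T)/4$. Hence $\vHDp_{\rk_r(\Sy_n),\bu,\bv}(t^2)=\vHDp_{\rk_r(\M_{n}),\bu,\bv}(t^2)$.
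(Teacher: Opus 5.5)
On the generic locus your argument is the paper's argument. The paper's proof is only the observation $(\bu+\bu^T)(\bv+\bv^T)/4=\bu\bv^T$, fed into the eigenvector descriptions of Lemmas~\ref{matrices} and~\ref{symmatrices}. Your checks make that explicit:
\begin{itemize}
\item on symmetric data, \eqref{Eigen1}--\eqref{Eigen2} collapse to \eqref{Eigen2.1};
\item $\bz^{(k,1)}$ and $\bz^{(k,2)}$ are eigenvectors of the same matrix $\bu\bv$ with the same eigenvalue, so they are proportional when the eigenvalues are distinct;
\item $\bw^{(j,1)}=\bv\bz^{(j,2)}=\bw^{(j,2)}$.
\end{itemize}
The converse half is even redundant: once you have the inclusion and the equal finite counts $\binom{\rk(\bu\bv)}{r}$, the two loci coincide.

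The step that fails is the extension to nongeneric $(\bu,\bv)$, and it cannot be repaired, because the statement is false there. Take $n=2$, $r=1$, $\bu=\bv=I_2$, $a=(1,i)^T/\sqrt2$ and $b=(1,-i)^T/\sqrt2$.
\begin{itemize}
\item $\bz=aa^H=\frac12\begin{bmatrix}1&-i\\ i&1\end{bmatrix}$ is a smooth point of $\rk_1(\M_2)$.
\item $I_2-\bz=bb^H$ with $a^Hb=0$, so $I_2-\bz$ is orthogonal to $T_{\bz}\rk_1(\M_2)=\lbrace ax^T+ya^H\rbrace$.
\item Hence $(\bz,\bar\bz)$ lies in the zero locus of the Hermitian critical ideal of $\rk_1(\M_2)$ at $(I_2,I_2)$, yet $\bz$ is not symmetric.
\end{itemize}
The zero locus at $(I_2,I_2)$ is in fact positive-dimensional, since it contains every $aa^H$ with $\|a\|=1$; so even the count in Lemma~\ref{matrices} fails there.

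Your ``uniqueness within the chosen eigenspace decomposition'' is exactly what breaks when $\bu\bv$ has a repeated eigenvalue. Transposition invariance only shows that the critical set is stable under $\bz\mapsto\bz^T$, not that each point is fixed. A closure argument only shows that \emph{limits} of symmetric critical points are symmetric. It cannot show that every critical point at a special parameter is such a limit, because the critical locus jumps there.

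So drop that paragraph and state the first two claims for $(\bu,\bv)$ in the Zariski-open set where $\bu\bv$ has distinct eigenvalues. That is also what the paper's one-line proof tacitly uses. The equality $\vHDp_{\rk_r(\Sy_n),\bu,\bv}(t^2)=\vHDp_{\rk_r(\M_n),\bu,\bv}(t^2)$ is unaffected. Both sides are the same symmetric function of the eigenvalues of $\bu\bv$, by Corollaries~\ref{rmatrices} and~\ref{rsmatrices}. Equivalently, the identity holds on a Zariski-dense set and hence identically.
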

\begin{proof}
The claim follows from the proof of Lemma~\ref{symmatrices} by noting that if $\bu$ and $\bv$ are symmetric then $(\bu+\bu^T)(\bv+\bv^T)/4=\bu\bv^T$.
\end{proof}

\begin{example}
Let $X=\rk_{1}(\Sy_{2})=V(z_1z_4-z_2z_3,z_2-z_3)\subseteq\M_2$, alternatively to this definition, we can simplify our problem and consider the variety $\hat{X}_2\subseteq\C^3\simeq\Sy_2$ we will study in Example~\ref{exratnorm2} and the Hermitian form $q(\bz,\bw)=z_1\bar{w}_1+2z_2\bar{w}_2+z_3\bar{w}_3$. With this choice, the HD polynomial is
\begin{align*}
    \HDp_{\hat{X}_2,\bu,\bv}(t^2)=t^4-\langle\bu,\bv\rangle_{\R} t^2+u_2^2v_2^2+u_1v_1+u_3v_3-u_1v_2^2u_3-v_1u_2^2v_3
\end{align*}
and if we consider the point $\bu$ as the symmetric matrix $\begin{bmatrix}
    u_1 & u_2\\
    u_2 & u_3
\end{bmatrix}$ and similarly for $\bv$ we obtain the formula 
\begin{equation*}
    \HDp_{\hat{X}_2,\bu,\bv}(t^2)=\det(t^2I_2-\bu\bv^T)=t^4-\langle\bu,\bv\rangle_{\R}t^2+\det(\bu\bv^T)
\end{equation*}
which was known from Example~\ref{2matrix2x}.
\end{example}

%------------------------------------------------------------------------------------------

\subsection{Discriminant of a Hermitian matrix}\label{ssec:parl}

\begin{definition}
Let $A\in \M_{n}$ with possibly repeated eigenvalues $\lambda_1,\ldots,\lambda_n$. Its \emph{discriminant} is the product
\begin{equation*}
    \disc(A)\coloneqq\Delta_t\det(tI_n-A)=\prod_{k\neq j}(\lambda_k-\lambda_j)=(-1)^{\frac{n(n-1)}{2}}\prod_{1\leq k<j\leq n}(\lambda_k-\lambda_j)^2.
\end{equation*}
\end{definition}

In \cite[Section 2]{p2002} the author obtains the following formula for the discriminant. Let us indicate for $A\in\M_n^m$ and $\varphi=\lbrace\varphi_1,\ldots,\varphi_k\rbrace\subseteq[n]$ the matrix and vector 
\begin{equation*}
  A(\varphi,:)\coloneqq\begin{bmatrix}
                A_{\varphi_1,\cdot}\\
                \vdots\\
                A_{\varphi_k,\cdot}
               \end{bmatrix}\in\M_{k}^m\qquad\text{and}\qquad\vect(A)\coloneqq\begin{bmatrix}
                A_{1,\cdot}^T\\
                \vdots\\
                A_{n,\cdot}^T
               \end{bmatrix}\in\C^{nm}.
\end{equation*}

\begin{proposition*}[\textbf{Parlett '02}]
Let $A\in \M_{n}$ then
\begin{equation*}
  \disc(A)=\det(\mathcal{L}_A^T\mathcal{L}_A)=\sum_{\text{$\varphi\subseteq[n^2]$ of cardinality $n$}}\det(\mathcal{L}_{A^{T}}(\varphi,\colon))\det(\mathcal{L}_A(\varphi,\colon))
\end{equation*}
where $\mathcal{L}_A\in\M_{n^2}^{n}$ is the matrix $\mathcal{L}_A=\begin{bmatrix}
      \vect(I_n) & \vect(A) & \vect(A^2)& \cdots & \vect(A^{n-1})
    \end{bmatrix}$.
In particular, if $A$ is symmetric its discriminant can be written as a sum of squares.\label{parl}
\end{proposition*}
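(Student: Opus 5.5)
The idea is to write the discriminant as the Gram determinant of a Vandermonde matrix and then pass to $\mathcal{L}_A$ by diagonalization.

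\emph{Step 1: reduce to diagonalizable matrices.} Both sides of the identity are polynomials in the entries of $A$, and diagonalizable matrices with distinct eigenvalues are Zariski dense in $\M_n$. So it is enough to prove the identity for $A=PDP^{-1}$ with $D=\mathrm{diag}(\lambda_1,\ldots,\lambda_n)$.

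\emph{Step 2: the trace matrix.} The $(i,j)$ entry of $\mathcal{L}_A^T\mathcal{L}_A$ is $\vect(A^{i-1})^T\vect(A^{j-1})$. We then compute:
\begin{itemize}
\item By definition of $\vect$, this entry equals $\Tr\big((A^{i-1})^TA^{j-1}\big)$.
\item This is not yet the power sum $p_{i+j-2}=\Tr(A^{i+j-2})$, which would give the classical Hankel matrix of power sums. The Hankel matrix $H=(p_{i+j-2})$ equals $V^TV$ with $V=(\lambda_k^{j-1})$ the Vandermonde matrix. Hence $\det H=\prod_{k<j}(\lambda_k-\lambda_j)^2$, which is the discriminant up to the sign convention fixed in the definition.
\end{itemize}

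\emph{Step 3: reconcile the transpose.} The paper's formula pairs $A^T$ with $A$. I would therefore read the identity as
\[
\disc(A)=\det(\mathcal{L}_{A^T}^T\mathcal{L}_A),
\]
which is what the Cauchy--Binet expansion on the right-hand side literally says. The $(i,j)$ entry of $\mathcal{L}_{A^T}^T\mathcal{L}_A$ is $\vect((A^T)^{i-1})^T\vect(A^{j-1})=\Tr(A^{i-1}A^{j-1})=p_{i+j-2}$. So $\mathcal{L}_{A^T}^T\mathcal{L}_A=H$, and its determinant is $\prod_{k<j}(\lambda_k-\lambda_j)^2$. If the sign $(-1)^{n(n-1)/2}$ in the definition is to be matched exactly, I would check the convention at this point; I expect this is the main nuisance.

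\emph{Step 4: Cauchy--Binet and the symmetric case.} The middle expression equals the sum by the Cauchy--Binet formula applied to the $n\times n^2$ matrix $\mathcal{L}_{A^T}^T$ and the $n^2\times n$ matrix $\mathcal{L}_A$. When $A$ is symmetric we have $\mathcal{L}_{A^T}=\mathcal{L}_A$, so each summand is $\det(\mathcal{L}_A(\varphi,:))^2$ and the discriminant is a sum of squares.

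The main obstacle is the transpose bookkeeping in Step 3. One has to confirm that $\vect(B)^T\vect(C)=\Tr(B^TC)$ with the row-stacking $\vect$, so that pairing $A^T$-powers with $A$-powers yields genuine power sums. Everything else is the standard Vandermonde computation.
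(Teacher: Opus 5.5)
Your argument is correct. The paper gives no proof of this proposition; it only quotes it from Parlett. Your route is the standard one: the power-sum Hankel matrix $H=(\Tr(A^{i+j-2}))_{i,j}=V^TV$, the identity $\vect((A^T)^{i-1})^T\vect(A^{j-1})=\Tr(A^{i+j-2})$, then Cauchy--Binet.

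Two steps can be simplified:
\begin{enumerate}
\item Step~1 is unnecessary, since $\Tr(A^k)=\sum_i\lambda_i^k$ holds for every $A$ by triangularization.
\item The ``main obstacle'' you name does not exist: $\vect(B)^T\vect(C)=\sum_{k,l}B_{kl}C_{kl}=\Tr(B^TC)$ for any fixed ordering of the entries.
\end{enumerate}

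You present one point as a ``reading'' and leave another as a convention ``to be checked.'' Both should be stated as corrections. The printed statement is false without them, and your computation settles both.

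\emph{The transpose.} $\det(\mathcal{L}_A^T\mathcal{L}_A)$ is not $\disc(A)$ for non-symmetric $A$. For $A=\begin{bmatrix}a&b\\c&d\end{bmatrix}$ it equals $(a-d)^2+2b^2+2c^2$, while $\det(\mathcal{L}_{A^T}^T\mathcal{L}_A)=(a-d)^2+4bc=(\lambda_1-\lambda_2)^2$. By Cauchy--Binet the printed middle term equals $\sum_\varphi\det(\mathcal{L}_A(\varphi,\colon))^2$. So the printed chain is valid for symmetric $A$ but is not an identity on $\M_n$. This matters for the paper: for the Hermitian matrix $A=\begin{bmatrix}0&i\\-i&0\end{bmatrix}$ the printed middle term is $-4$, while $(\lambda_1-\lambda_2)^2=4$. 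The mixed form with $\mathcal{L}_{A^T}$ that you prove is exactly what Corollary~\ref{sqmod} uses.

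\emph{The sign.} No convention-checking can remove it. Your computation gives $\det H=(\det V)^2=\prod_{k<j}(\lambda_k-\lambda_j)^2$ with no sign. But already for the symmetric $A=\mathrm{diag}(1,0)$, the definition's $\prod_{k\neq j}(\lambda_k-\lambda_j)$ equals $-1$, while the right-hand side equals $1$.

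So the proposition holds exactly for the standard discriminant $\prod_{k<j}(\lambda_k-\lambda_j)^2$ of the characteristic polynomial. This is the normalization the paper actually uses for $\Delta_{t^2}$ in Example~\ref{2matrix2x}. It is also the only one compatible with the sum-of-squares conclusion: for real symmetric $A$ with distinct eigenvalues and $n\equiv 2,3\pmod{4}$, the product over $k\neq j$ is negative. That product is the resultant of the characteristic polynomial with its derivative, which differs from the discriminant by the factor $(-1)^{n(n-1)/2}$.
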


In the case of Hermitian matrices we get the following result.

\begin{corollary}\label{sqmod}
The discriminant of a Hermitian matrix $A\in \M_{n}$ can be written as a sum of squared modules.
\end{corollary}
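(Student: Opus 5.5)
We will apply Parlett's formula (the Proposition above) directly to a Hermitian matrix $A$, i.e.\ one with $A^H=A$, equivalently $A^T=\bar A$. The plan is to show that in the expansion
\begin{equation*}
  \disc(A)=\sum_{\text{$\varphi\subseteq[n^2]$ of cardinality $n$}}\det(\mathcal{L}_{A^{T}}(\varphi,\colon))\det(\mathcal{L}_A(\varphi,\colon))
\end{equation*}
each summand has the form $\overline{c_\varphi}\,c_\varphi=|c_\varphi|^2$, with $c_\varphi=\det(\mathcal{L}_A(\varphi,\colon))$.

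\textbf{Key step: relating $\mathcal{L}_{A^T}$ to $\mathcal{L}_A$.} Since $A^T=\bar A$ and conjugation commutes with matrix multiplication, we have $(A^T)^k=(\bar A)^k=\overline{A^k}$ for every $k\geq 0$. Hence $\vect((A^T)^k)=\overline{\vect(A^k)}$, because $\vect$ is just a rearrangement of entries. Comparing the two matrices column by column then gives
\begin{equation*}
  \mathcal{L}_{A^T}=\overline{\mathcal{L}_A}.
\end{equation*}
Restricting to the rows indexed by $\varphi$ commutes with conjugation, and the determinant is a polynomial with integer coefficients in the entries. It follows that
\begin{equation*}
  \det(\mathcal{L}_{A^T}(\varphi,\colon))=\overline{\det(\mathcal{L}_A(\varphi,\colon))}.
\end{equation*}
Substituting this into Parlett's formula gives
\begin{equation*}
  \disc(A)=\sum_{\varphi}\left|\det(\mathcal{L}_A(\varphi,\colon))\right|^2,
\end{equation*}
which is the claim.

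\textbf{Only point needing care.} The one thing to check is that Parlett's identity $\disc(A)=\det(\mathcal{L}_{A}^T\mathcal{L}_A)$, written with the plain transpose and not the conjugate transpose, holds for arbitrary complex matrices. It is a polynomial identity in the entries of $A$, so it holds over $\C$ as stated. Combining it with Cauchy--Binet, as the Proposition already does, gives the expansion above. Equivalently, we can observe that $\mathcal{L}_{A^T}^T\mathcal{L}_A=\mathcal{L}_A^H\mathcal{L}_A$ is Hermitian positive semidefinite, and then Cauchy--Binet directly yields a sum of squared moduli.

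As a sanity check, the result is consistent with the eigenvalues of a Hermitian matrix being real. In that case $(-1)^{n(n-1)/2}\disc$ is a product of squares, so the sign conventions in the definition of $\disc$ should be compared with the Proposition's normalization. We take the Proposition as stated in the paper.
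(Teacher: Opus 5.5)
Your proposal is correct and follows the paper's proof exactly: $A^T=\bar A$ gives $\mathcal{L}_{A^T}=\overline{\mathcal{L}_A}$, so each Cauchy--Binet term in Parlett's expansion is $|\det(\mathcal{L}_A(\varphi,\colon))|^2$. Two small remarks: the determinant that expands into those mixed products is $\det(\mathcal{L}_{A^T}^T\mathcal{L}_A)=\det[\Tr(A^{i+j})]_{i,j=0}^{n-1}=\prod_{k<j}(\lambda_k-\lambda_j)^2$, whereas $\det(\mathcal{L}_A^T\mathcal{L}_A)$ (a typo you carried over from the statement) is not the discriminant even for real matrices, e.g.\ it equals $2$ for $A=\begin{bmatrix}0&1\\0&0\end{bmatrix}$; and your sign worry is justified, since the factor $(-1)^{n(n-1)/2}$ in the definition of $\disc$ does not match Parlett's formula.
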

\begin{proof}
The assertion follows from Proposition~\ref{parl} and the equalities $\mathcal{L}_{A^{T}}=\mathcal{L}_{\bar{A}}=\overline{\mathcal{L}_{A}}$ which imply $\det(\mathcal{L}_{A^{T}}(\varphi,\colon))=\overline{\det(\mathcal{L}_{A}(\varphi,\colon))}$.
\end{proof}

\begin{corollary}\label{coroeym}
Let $r\in[\min\lbrace n,m\rbrace]$, the discriminants of the HD polynomials of $\rk_r(\M_{n}^{m})$ and $\rk_r(\Sy_n)$ can be written as sums of squared modules. Moreover, the real codimension of the HD discriminant of both varieties is greater or equal than $2$.
\end{corollary}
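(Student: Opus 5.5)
The plan is to express the HD polynomial of each variety as the characteristic polynomial of a suitable Hermitian matrix and then use Corollary~\ref{sqmod}.

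First, the \emph{specialization}. Setting $\bv=\bar{\bu}$ in Corollary~\ref{rmatrices} gives $\HDp_{X,\bu}(t^2)=\vHDp_{X,\bu,\bar{\bu}}(t^2)$. Its roots in $t^2$ are the sums $\sum_{k\notin W}\lambda_k$, where $W$ has cardinality $\min\lbrace n,m\rbrace-r$ and $\lambda_1,\ldots,\lambda_{\min\lbrace n,m\rbrace}$ are the eigenvalues of $\bu\bu^H$ (or $\bu^H\bu$). These eigenvalues are the squared singular values $\sigma_k^2$. The analogous statement holds for $\rk_r(\Sy_n)$, with $(\bu+\bu^T)(\bu+\bu^T)^H/4$ in place of $\bu\bu^H$. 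This matrix is again Hermitian and positive semidefinite.

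Second, I will \emph{realize the $r$-fold sums as eigenvalues of a Hermitian matrix}. Write $N=\min\lbrace n,m\rbrace$ and $A=\bu\bu^H$ (an $N\times N$ matrix, choosing the smaller side). Consider the derivation-type operator on the exterior power,
\begin{equation*}
  A^{[r]}\coloneqq\sum_{i=1}^{r}\Id\wedge\cdots\wedge A\wedge\cdots\wedge\Id\quad\text{acting on }\textstyle\bigwedge^{r}\C^{N},
\end{equation*}
with $A$ in the $i$-th slot. Its eigenvalues are exactly the sums $\lambda_{k_1}+\ldots+\lambda_{k_r}$ over $r$-subsets $\lbrace k_1,\ldots,k_r\rbrace\subseteq[N]$, that is, the sums $\sum_{k\notin W}\lambda_k$ above. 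In the standard orthonormal basis of wedge products, $A^{[r]}$ is Hermitian whenever $A$ is, because it is the differential of the unitary-equivariant representation $g\mapsto\wedge^r g$. Hence
\begin{equation*}
  \HDp_{X,\bu}(t^2)=\det\bigl(t^2I-A^{[r]}\bigr),
\end{equation*}
which recovers the two displayed special cases $r=N-1$ and $r=1$ of Corollary~\ref{rmatrices}. Therefore $\Delta_{t^2}\HDp_{X,\bu}(t^2)=\disc(A^{[r]})$. Corollary~\ref{sqmod} then writes this as a sum of squared moduli of polynomials in the entries of $A^{[r]}$, hence of generalized polynomials in $\bu$. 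The symmetric case is identical with $A=(\bu+\bu^T)(\bar{\bu}+\bu^H)/4$. The one point to check is sign conventions: Corollary~\ref{sqmod} should give the sum of squared moduli up to the global sign $(-1)^{\binom{M}{2}}$ with $M=\binom{N}{r}$. I would either absorb this sign or use the $\prod_{k<j}$ normalization, so the statement is read up to this fixed sign.

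Third, the \emph{codimension}. Since $\disc(A^{[r]})=\sum_\varphi|p_\varphi(\bu)|^2$, its zero locus is the common zero set of the real and imaginary parts of all the $p_\varphi$. That locus has real codimension at least $2$ unless all the $p_\varphi$ are real multiples of a single function whose real and imaginary parts define the same hypersurface, a degenerate situation to be excluded. I expect this to be the main obstacle. To handle it cleanly I would use the geometric description instead: the discriminant vanishes iff $A^{[r]}$ has a repeated eigenvalue. By von Neumann--Wigner, the set of Hermitian matrices with a repeated eigenvalue has real codimension $3$ in the space of Hermitian matrices. So I would exhibit the condition "two distinct $r$-subsets of the $\sigma_k^2$ have equal sum" as a real-analytic subset of codimension $\geq2$ in $\M_n^m$. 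Parametrizing by SVD, $\bu=U\Sigma V^H$, the condition is one real linear equation on the $\sigma_k^2$. At a generic point of that locus, however, the corresponding pair of eigenvectors of $A^{[r]}$ cannot be rotated independently, because $U$ is fixed up to the torus. So the codimension count is really a count of the fiber dimension of the SVD map. I would carry this out by showing that at such points the stabilizer, and hence the local structure, forces at least one additional real condition, compared with the real hypersurface one might naively expect.

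If this turns out to be delicate, my fallback is to argue algebraically. Take a generic point where exactly one coincidence $\sum_{W}\sigma^2=\sum_{W'}\sigma^2$ occurs. Near such a point $\disc$ vanishes to order $2$ along the locus, as a square of a modulus $|p|^2$ with $p$ a genuinely complex generalized polynomial whose real and imaginary parts are independent. I would verify this independence on the explicit $2\times2$ formula of Example~\ref{2matrix2x}, which exhibits two independent real factors' worth of vanishing conditions, and then extend it to the general case by restricting to block-diagonal $\bu$.
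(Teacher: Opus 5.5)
Your first step is sound, and it is cleaner than the paper's proof. The paper writes $\HDp_{X,\bu}(t^2)=\det(t^2I-D)$ with $D$ the diagonal matrix of eigenvalue sums and applies Corollary~\ref{sqmod} to $D$, whose entries are not polynomial in $\bu$. Your additive compound has entries linear in $\bu\bu^H$, so the moduli are of genuine generalized polynomials in $\bu$. Fix the index, though. By Proposition~\ref{hddegmatrix} and the two displayed cases, the roots are sums of $N-r$ eigenvalues, so the right matrix is $A^{[N-r]}$; your $A^{[r]}$ swaps the cases $r=1$ and $r=N-1$. The discriminant is unaffected, since the $(N-r)$-sums are $\Tr(A)$ minus the $r$-sums.

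The codimension part has a genuine gap, and it cannot be closed as you set it up. Both your stabilizer argument and your fallback assume that a coincidence $\sum_{k\in S}\lambda_k=\sum_{k\in S'}\lambda_k$ of two distinct subsets forces more than one real condition. That is true only when $S,S'$ differ by one element, i.e.\ the coincidence is $\lambda_i=\lambda_j$. For $N\geq 4$ and $2\leq r\leq N-2$ there are coincidences with all $\lambda_k$ distinct.

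Take $n=m=4$, $r=2$, $\bu_0=\mathrm{diag}(2,\sqrt{3},\sqrt{2},1)$. The $2$-sums of $(4,3,2,1)$ are $7,6,5,5,4,3$, with the single coincidence $\lambda_1+\lambda_4=\lambda_2+\lambda_3$. Near $\bu_0$ the eigenvalues of $\bu\bu^H$ are real-analytic, and $g=\lambda_1+\lambda_4-\lambda_2-\lambda_3$ has nonzero differential (vary the $(1,1)$ entry). So the zero set of $\Delta_{t^2}\HDp_{X,\bu}(t^2)$ is locally the smooth real hypersurface $\lbrace g=0\rbrace$. Each of your mechanisms fails there:
\begin{itemize}
\item The discriminant is $g^2h$ with $h>0$, the square of a sign-changing real function, not $|p|^2$ with independent real and imaginary parts.
\item The stabilizer is just the torus, so there is no extra real condition to extract.
\item von Neumann--Wigner does not transfer to $A^{[r]}$: in the smoothly varying eigenbasis of $A$ the matrix $A^{[r]}$ is diagonal, so there is no off-diagonal coupling to supply the two extra conditions.
\end{itemize}
Read as the zero set of $\Delta_{t^2}\HDp_{X,\bu}$, the codimension claim therefore fails in this range, and the same example works for $\rk_r(\Sy_n)$. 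The paper's one-line justification (``the sum is non trivial in $\bu$'') does not address this either.

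What does hold is the statement for the locus where critical points collide: a repeated nonzero eigenvalue of $A$, or $\rk\bu<N$ (resp.\ $\rk(\bu+\bu^T)<n$). This is also what the subsequent conclusion that $\HD(X)$ is a singleton actually needs. For it, apply the dimension count to $A$ itself, not to $A^{[r]}$. The SVD parameter count gives real codimension $3$ for a repeated nonzero singular value, and the Takagi factorization gives $2$ in the symmetric case. The rank-deficient locus is a proper complex subvariety, hence of real codimension at least $2$. Finally, for $r\in\lbrace 1,N-1\rbrace$ every coincidence is of the form $\lambda_i=\lambda_j$. There the zero set of $\Delta_{t^2}\HDp_{X,\bu}$ is exactly the repeated-eigenvalue locus of $A$, so your statement holds as written.
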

\begin{proof}
Let $X=\rk_r(\M_{n}^{m})$ and assume without loss of generality $n\leq m$. If $\lambda_1,\ldots,\lambda_n$ are the eigenvalues of $\bu\bu^H$, the statement follows from the equalities
\begin{align*}
  \HDp_{X,\bu}(t^2)&=\prod\limits_{\text{$W\subseteq[n]$ of cardinality $n-r$}}\left(t^2-\sum_{k\notin W}\lambda_k\right)\\
  &=\det\left(t^2I_{\binom{n}{r}}-\begin{bmatrix}
      \sum_{k\notin W_1}\lambda_k & \cdots & 0\\
      \vdots & \ddots & \vdots\\
      0 & \cdots & \sum_{k\notin W_{\binom{n}{r}}}\lambda_k
  \end{bmatrix}\right)
\end{align*}
exploited in Corollary~\ref{rmatrices} where $W_j$ for $j=1,\ldots,\binom{n}{r}$ are the different subsets of $[n]$ of cardinality $n-r$ and Corollary~\ref{sqmod}. The claim follows analogously for $\rk_r(\Sy_n)$ by considering the Hermitian matrix $(\bu+\bu^T)(\bu^H+\bar{\bu})/4$. The last statement follows from the fact that the sum is non trivial in the coefficients of $\bu$.
\end{proof}

In particular, this last results confirms that the set $\HD$ contains only one value for both the determinantal varieties and, as a consequence, by the proof of Lemma~\ref{matrices}, we reobtain the complex Eckart-Young-Mirsky theorem reported in Subsection~\ref{ssec:genmat}.

%----------------------------------------------------------------------------------------------------------------------------------------------

\section{Tensors spaces}\label{sec:segre}\thispagestyle{plain}

We turn our attention to the varieties of tensors, the Euclidean distance case is treated in \cite{dhost2014,of2014,dh2016}. In particular, in \cite{of2014} it is shown a useful way to compute the $\ED$ of the Segre-Veronese variety, while in \cite{dh2016} the authors provide a formula to compute the average number of critical points for real tensors with respect to a probability measure.

Let $d\in\N$, $\bn=(n_1,\ldots,n_d)\in\N^{d}$ be a vector of indices and let $\V_1,\ldots,\V_d$ be complex vector spaces of dimension $n_k$ endowed with a Hermitian form $q_k$ respectively for $k=1,\ldots,d$. In this section we consider the vector space $\V$ to be the tensor product $\V_1\otimes\ldots\otimes\V_d$.

It is well known that there exists a unique Hermitian form $q$ on $\V=\bigotimes_{k=1}^d\V_k$ such that
\begin{equation*}
    q(\bz^{(1)}\otimes\ldots\otimes\bz^{(d)},\bw^{(1)}\otimes\ldots\otimes\bw^{(d)})=\prod_{k=1}^dq_k(\bz^{(k)},\bw^{(k)})
\end{equation*}
where $\bz^{(k)},\bw^{(k)}\in\V_k$ for $k\in[d]$, which is called the \emph{Bombieri-Weyl} or \emph{Frobenius} form. In particular, the Bombieri-Weyl form is unitary invariant in the sense that for any $\bz,\bw\in\V$ and unitary $g\in\U(\V_1)\times\ldots\times\U(\V_d)$ it holds $q(g\cdot\bz,g\cdot\bw)=q(\bz,\bw)$.

If we set a basis on each $\V_k$, we denote the space $\T^{\bn}=\bigotimes_{k=1}^d\C^{n_k}$ or simply $\T^n_d$ when $n_1=\ldots=n_d=n\in\N$, thus we identify $\T^{(n,m)}=\M_n^m$. Assume we have chosen orthonormal bases for which any $q_k$ is the canonical Hermitian product. Let $\mathcal{A}=(a_{j_1\ldots j_d}),\mathcal{B}=(b_{j_1\ldots j_d})\in\T^{\bn}$ be tensors, then the Bombieri-Weyl form is the Hermitian inner product
\begin{equation*}
\langle\mathcal{A},\mathcal{B}\rangle_{\C}=\sum_{j_1=1}^{n_1}\cdots\sum_{j_d=1}^{n_d}a_{j_1\ldots j_d}\bar{b}_{j_1\ldots j_d}.
\end{equation*}

\begin{definition}
Let $\mathcal{A}=(a_{j_1\ldots j_{d}})\in\T^{\bn}$ and $\mathcal{B}=(b_{j_1\ldots j_{e}})\in\T^{\bm}$ be tensors such that $\bm=(n_{k_1},\ldots,n_{k_{e}})$ for $1\leq k_1\leq\ldots\leq k_{e}\leq d$. Then, the \emph{contraction} $\mathcal{A}\times\mathcal{B}\in\T^{\bn\setminus\bm}$ where
\begin{equation*}
    \bn\setminus\bm\coloneqq(n_1,\ldots,n_{k_1-1},n_{k_1+1},\ldots,n_{k_2-1},n_{k_2+1},\ldots,n_{d})
\end{equation*}
of $\mathcal{A}$ and $\mathcal{B}$ is such that
\begin{equation*}
    (\mathcal{A}\times\mathcal{B})_{j_1\ldots j_{k_1-1}j_{k_1+1}\ldots j_{k_2-1}j_{k_2+1}\ldots j_{d}}=\sum_{j_{k_1}=1}^{n_{k_1}}\cdots\sum_{j_{k_{e}}=1}^{n_{k_{e}}}a_{j_1\ldots j_{d}}b_{j_{k_1}\ldots j_{k_{e}}}.
\end{equation*}
\end{definition}

\begin{remark}
Note that, if $\mathcal{A},\mathcal{B}\in\T^{\bn}$, then we have the equivalence $\mathcal{A}\times\bar{\mathcal{B}}=\langle\mathcal{A},\mathcal{B}\rangle_{\C}$.
\end{remark}

\begin{definition}
The \emph{spectral norm} in $\T^{\bn}$ is defined to be
\begin{equation*}
    \|\mathcal{A}\|_{S}\coloneqq\sup_{\bz^{(k)}\in\C^{n_k}\setminus\lbrace 0\rbrace}\frac{|\langle\mathcal{A},\otimes_{k=1}^d\bz^{(k)}\rangle_{\C}|}{\prod_{k=1}^d\|\bz^{(k)}\|_{2}}=\max_{\substack{\bz^{(k)}\in\C^{n_k} \\ \|\bz^{(k)}\|_2=1}}|\langle\mathcal{A},\otimes_{k=1}^d\bz^{(k)}\rangle_{\C}|\qquad\text{for $\mathcal{A}\in\T^{\bn}$.}
\end{equation*}
\end{definition}

The following notion is the generalization of Definition~\ref{defnormmat} in the case of tensors.

\begin{definition}
Let $\|\phantom{\bz}\|$ be the same norm on $\C^{n_1},\ldots,\C^{n_d}$, then the \emph{induced tensor norm} on $\T^{\bn}$ is defined to be
\begin{equation*}
    \|\mathcal{A}\|\coloneqq\sup_{\substack{j\in [d]\\\bz^{(k)}\in\C^{n_k}\setminus\lbrace 0\rbrace}}\frac{\left\|\mathcal{A}\times\otimes_{\substack{k=1\\ k\neq j}}^d\bz^{(k)}\right\|}{\prod_{\substack{k=1 \\ k\neq j}}^d\|\bz^{(k)}\|}\qquad\text{for $\mathcal{A}\in\T^{\bn}$.}
\end{equation*}
\end{definition}

\begin{proposition}\label{equalnorm}
In $\T^{\bn}$ there holds the equality of norms $\|\phantom{A}\|_{S}=\|\phantom{A}\|_{2}$.
\end{proposition}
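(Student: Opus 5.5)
We prove the two inequalities $\|\mathcal{A}\|_{S}\leq\|\mathcal{A}\|_2$ and $\|\mathcal{A}\|_2\leq\|\mathcal{A}\|_{S}$ separately. Both rest on one identity, which we establish first. For any index $j\in[d]$ and any vectors $\bz^{(k)}\in\C^{n_k}$, $k\neq j$, let
\begin{equation*}
\mathcal{C}_j\coloneqq\mathcal{A}\times\bigotimes_{k\neq j}\bar{\bz}^{(k)}\in\C^{n_j}.
\end{equation*}
Unwinding the definition of contraction, for every $\bz^{(j)}\in\C^{n_j}$ we get
\begin{equation*}
\langle\mathcal{A},\otimes_{k=1}^d\bz^{(k)}\rangle_{\C}=\langle\mathcal{C}_j,\bz^{(j)}\rangle_{\C}.
\end{equation*}
This is the tensor analogue of $\langle A,\bz\otimes\bw\rangle_{\C}=\bz^{H}A\bar{\bw}$ for matrices, and it is a direct index computation. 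Throughout we take the norm $\|\phantom{\bz}\|$ in the induced tensor norm to be the $2$-norm.

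For $\|\mathcal{A}\|_{S}\leq\|\mathcal{A}\|_2$, fix unit vectors $\bz^{(1)},\ldots,\bz^{(d)}$ and any $j$. By the identity and Cauchy--Schwarz in $\C^{n_j}$,
\begin{equation*}
|\langle\mathcal{A},\otimes_k\bz^{(k)}\rangle_{\C}|\leq\|\mathcal{C}_j\|_2\,\|\bz^{(j)}\|_2=\|\mathcal{C}_j\|_2.
\end{equation*}
Since $\|\bar{\bz}^{(k)}\|_2=1$ for $k\neq j$, the quantity $\|\mathcal{C}_j\|_2$ is one of the quotients appearing in the supremum defining $\|\mathcal{A}\|_2$. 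Hence it is at most $\|\mathcal{A}\|_2$, and taking the maximum over the unit vectors gives the inequality.

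For $\|\mathcal{A}\|_2\leq\|\mathcal{A}\|_{S}$, the induced quotient is homogeneous, so we may fix $j$ and unit vectors $\bz^{(k)}$ for $k\neq j$. Put $\mathcal{C}=\mathcal{A}\times\otimes_{k\neq j}\bz^{(k)}$. If $\mathcal{C}=0$ there is nothing to prove. Otherwise choose the unit vector $\bz^{(j)}\coloneqq\mathcal{C}/\|\mathcal{C}\|_2$, the Cauchy--Schwarz extremizer. Applying the identity with the conjugate vectors $\bar{\bz}^{(k)}$ in slots $k\neq j$ yields
\begin{equation*}
|\langle\mathcal{A},\bz^{(j)}\otimes\bigotimes_{k\neq j}\bar{\bz}^{(k)}\rangle_{\C}|=|\langle\mathcal{C},\bz^{(j)}\rangle_{\C}|=\|\mathcal{C}\|_2.
\end{equation*}
The tensor on the left is a product of unit vectors, so the left-hand side is at most $\|\mathcal{A}\|_{S}$. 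Taking the supremum over $j$ and the $\bz^{(k)}$ then gives $\|\mathcal{A}\|_2\leq\|\mathcal{A}\|_{S}$.

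Both sides are suprema over compact sets, since the unit spheres are compact, so the maxima are attained and no limiting argument is needed. The main obstacle is purely bookkeeping. One must check the contraction identity with the slots correctly matched when $j$ is not the last index. One must also keep track of the conjugations, since contraction is bilinear while $\langle\cdot,\cdot\rangle_{\C}$ conjugates its second argument. Because conjugation preserves the $2$-norm, these conjugations do not affect the bounds. For $d=2$ the argument recovers the equality $\|A\|_2=\sigma_1$ from Remark~\ref{matrixnorm}, which serves as a check.
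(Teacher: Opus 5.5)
Your proof is correct and takes essentially the same approach as the paper. Both rest on the dual characterization $\|\bz\|_2=\max_{\|\bw\|_2=1}|\bw^{T}\bz|$ (Cauchy--Schwarz together with its extremizer), applied to the vector left after contracting every slot but one. The paper writes this out only for $d=2$ and says ``iterating'' gives the general case, whereas you handle all $d$ in one step and track the conjugations explicitly, which is if anything cleaner.
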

\begin{proof}
In the case $d=2$ we have $\T^{\bn}=\M_{n_1}^{n_2}$. We have already studied this variety in Section~\ref{sec:matrices} and the assertion follows from the equalities
\begin{equation*}
    \|A\|_{2}=\sup_{\bz\in\C^m\setminus\lbrace 0\rbrace}\frac{\|A\bz\|_{2}}{\|\bz\|_{2}}=\sup_{\bz^{(k)}\in\C^{n_k}\setminus\lbrace 0\rbrace}\frac{|(\bz^{(1)})^{T}A\bz^{(2)}|}{\|\bz^{(1)}\|_{2}\|\bz^{(2)}\|_{2}}=\max_{\substack{\bz^{(k)}\in\C^{n_k} \\ \|\bz^{(k)}\|_2=1}}|(\bz^{(1)})^{T}A\bz^{(2)}|
\end{equation*} 
where we used the known fact
\begin{equation*}
    \|A\bz\|_2=\sup_{\bw\in\C^n\setminus\lbrace 0\rbrace}\frac{|\bw^{T}A\bz|}{\|\bw\|_{2}}=\max_{\substack{\bw\in\C^n\\ \|\bw\|_2=1}}|\bw^{T}A\bz|\qquad\text{for $A\in\M_n^m$ and $\bz\in\C^m$.}
\end{equation*}
Iterating this procedure the claim follows.
\end{proof}

\begin{remark}
Note that in general, as we have seen in the case of matrices in Section~\ref{sec:matrices}, the Hermitian norm $\|\phantom{A}\|_{\C}$ and the spectral norm $\|\phantom{A}\|_{S}$ do not coincide, see Corollary~\ref{disnorm}.
\end{remark}

%%%-----------------------------------------------------------------------------------------------

\subsection{Segre variety}\label{ssec:compsing}

Consider $\Seg\V\subseteq\V$ the Segre variety given by the Segre embedding 
\begin{align*}
    &\Pro\V_1\times\ldots\times\Pro\V_d\to\Seg\V\subseteq\Pro\V\\
    &\left([\bz^{(1)}],\ldots,[\bz^{(d)}]\right)\mapsto[\bz^{(1)}\otimes\ldots\otimes\bz^{(d)}]
\end{align*}
The Segre variety coincides with the variety of rank one, or equivalently decomposable, tensors. If we set bases on each $\V_k$, we denote $\Seg\T^{\bn}\subseteq\T^{\bn}$ the Segre variety and the Segre embedding reads
\begin{align*}
    &\Pro^{n_1-1}\times\ldots\times\Pro^{n_d-1}\to\Seg\T^{\bn}\subseteq\Pro^{n_1\cdots n_d-1}\\
    &\left([\bz^{(1)}],\ldots,[\bz^{(d)}]\right)\mapsto[z^{(1)}_{1}\cdots z^{(d)}_{1},\ldots,z^{(1)}_{j_1}\cdots z^{(d)}_{j_d},\ldots,z^{(1)}_{n_1}\cdots z^{(d)}_{n_d}]
\end{align*}

Let $X=\Seg\T^{\bn}$, for a tensor $\mathcal{U}\in\T^{\bn}$ we aim to solve
\begin{equation*}
  \min_{Z\in X}\|\mathcal{U}-Z\|_{\C}^2.
\end{equation*}
In order to solve the Euclidean version of this question
\begin{equation*}
  \min_{Z\in X}\|\mathcal{U}-Z\|_{\R}^2,
\end{equation*}
in \cite{lim2006} Lim proposes a variational approach on real tensors that links this problem to the problem of maximizing the function $\mathcal{U}\times Z=\langle\mathcal{U},Z\rangle_{\R}$ and which has led to various results. In particular, in the same work Lim introduces the notion of a \emph{singular vector tuple} of $\mathcal{U}\in\T^{\bn}$.

The same approach is then used again in \cite{of2014} where, more generally, the notion of singular vector tuple is linked to the solutions of the Euclidean critical ideal of a complex tensor. Here, we will consider this last  terminology for which a singular vector tuple is a $d$-tuple
\begin{equation*}
(\bz^{(1)},\ldots,\bz^{(d)})\in\C^{n_1}\times\ldots\times\C^{n_d}
\end{equation*}
of unit vectors such that for $k=1,\ldots,d$ there hold the equalities
\begin{equation}\label{singt}
\mathcal{U}\times(\otimes_{j\in[d]\setminus\lbrace k\rbrace}\bz^{(j)})=\langle\mathcal{U},\otimes_{j=1}^d\bz^{(j)}\rangle_{\R}\bz^{(k)}
\end{equation}
where the scalar $\langle\mathcal{U},\otimes_{j=1}^d\bz^{(j)}\rangle_{\R}$ is possibly non real and it is the associated \emph{singular value}. This way the singular vector tuples exactly characterize the solutions of the critical ideal of the $\ED$, see \cite{dhost2014}, using the formula $\langle\mathcal{U},\otimes_{j=1}^d\bz^{(j)}\rangle_{\R}\otimes_{k=1}^d\bz^{(k)}\in X$. What follows is the Hermitian version of this argument and can be found in literature for example in the work of Hiling and Sudbery \cite{hs2009}.

A rank one tensor $Z\in X$ can be written as $Z=\mu\, \bz^{(1)}\otimes\ldots\otimes\bz^{(d)}$ where $\mu\in\C$ and the vectors $\bz^{(k)}\in\C^{n_k}$ are unit vectors or in other terms satisfy $\|\bz^{(k)}\|^2_{\C}=1$. Thus, we obtain
\begin{align*}
  \min_{Z\in X}\|\mathcal{U}-Z\|_{\C}^2&=\min_{Z\in X}\left(\|\mathcal{U}\|_{\C}^2-\langle\mathcal{U},Z\rangle_{\C}-\overline{\langle\mathcal{U},Z\rangle_{\C}}+\|Z\|_{\C}^2\right)\\
  &=\min_{\substack{\|\bz^{(k)}\|^2_{\C}=1\\ \mu\in\C}}\left(\|\mathcal{U}\|_{\C}^2-\mu\langle\mathcal{U},\otimes_{k=1}^d\bz^{(k)}\rangle_{\C}-\bar{\mu}\overline{\langle\mathcal{U},\otimes_{k=1}^d\bz^{(k)}\rangle_{\C}}+|\mu|^2\right)\\
  &=\max_{\|\bz^{(k)}\|^2_{\C}=1}\frac{\left(\langle\mathcal{U},\otimes_{k=1}^d\bz^{(k)}\rangle_{\C}+\overline{\langle\mathcal{U},\otimes_{k=1}^d\bz^{(k)}\rangle_{\C}}\right)^2}{4}-\|\mathcal{U}\|_{\C}^2\\
  &=\max_{\|\bz^{(k)}\|^2_{\C}=1}\left|\langle\mathcal{U},\otimes_{k=1}^d\bz^{(k)}\rangle_{\C}\right|^2-\|\mathcal{U}\|_{\C}^2
\end{align*}
where we have chosen without loss of generality  $\mu=\langle\mathcal{U},\otimes_{k=1}^d\bz^{(k)}\rangle_{\C}$ to be real and nonnegative, see also below. Using the Lagrange multipliers we define the real-valued function 
\begin{equation*}
 L\colon\C^{n_1}\times\ldots\times\C^{n_d}\times\R^k\to\R
\end{equation*}
such that
\begin{align*}
 L(&\bz^{(1)},\ldots,\bz^{(d)},\lambda_1,\ldots,\lambda_k)\coloneqq\langle\mathcal{U},\otimes_{k=1}^d\bz^{(k)}\rangle_{\C}+\overline{\langle\mathcal{U},\otimes_{k=1}^d\bz^{(k)}\rangle_{\C}}-\sum_{k=1}^d\lambda_k(\|\bz^{(k)}\|_{\C}^2-1)\\
 &=\sum_{j_1=1}^{n_1}\cdots\sum_{j_d=1}^{n_d}(u_{j_1\ldots j_n}\bar{z}_{j_1}^{(1)}\cdots\bar{z}_{j_d}^{(d)}+\bar{u}_{j_1\ldots j_n}z_{j_1}^{(1)}\cdots z_{j_d}^{(d)})-\sum_{k=1}^d\lambda_k\left(\sum_{j=1}^{n_k}|z_{j}^{(k)}|^2-1\right)
\end{align*}
and its derivatives vanish if and only if for any $k\in[d]$ there hold the equalities
\begin{equation}\label{eqsing}
\mathcal{U}\times(\otimes_{j\in[d]\setminus\lbrace k\rbrace}\bar{\bz}^{(j)})=\lambda_{k}\bz^{(k)}\qquad\text{and}\qquad\|\bz^{(k)}\|_{\C}^2=1.
\end{equation}
Moreover, from the equalities
\begin{equation*}
    \lambda_k=\langle\lambda_k\bz^{(k)},\bar{\bz}^{(k)}\rangle_{\R}=\langle\mathcal{U}\times\otimes_{j\in[d]\setminus\lbrace k\rbrace}\bar{\bz}^{(j)},\bar{\bz}^{(k)}\rangle_{\R}=\langle\mathcal{U},\otimes_{j=1}^d\bz^{(j)}\rangle_{\C}
\end{equation*}
we deduce the independence of $\lambda_k$ from $k$. Thus, let $\lambda_k=\langle\mathcal{U},\otimes_{j=1}^d\bz^{(j)}\rangle_{\C}$ for any $k$. If this value along with the vectors $\bz^{(1)},\ldots,\bz^{(d)}$ satisfies the qualities \eqref{eqsing}, then for any $\xi\in\Sp^1\subseteq\C$ the value $\xi^{d}\langle\mathcal{U},\otimes_{j=1}^d\bz^{(j)}\rangle_{\C}$ along with the vectors $\bar{\xi}\bz^{(1)},\ldots,\bar{\xi}\bz^{(d)}$ satisfies the equalities \eqref{eqsing}. Thus, again we can assume that $\langle\mathcal{U},\otimes_{j=1}^d\bz^{(j)}\rangle_{\C}$ is real and nonnegative.

\begin{definition}
A \emph{Hermitian singular vector tuples} of $\mathcal{U}\in\T^{\bn}$ is a $d$-tuple
\begin{equation*}
(\bz^{(1)},\ldots,\bz^{(d)})\in\C^{n_1}\times\ldots\times\C^{n_d}
\end{equation*}
of unit vectors such that for any $k=1,\ldots,d$ it holds the equality
\begin{equation}\label{singvec}
\mathcal{U}\times(\otimes_{j\in[d]\setminus\lbrace k\rbrace}\bar{\bz}^{(j)})=\langle\mathcal{U},\otimes_{j=1}^d\bz^{(j)}\rangle_{\C}\bz^{(k)}
\end{equation}
and the scalar $\langle\mathcal{U},\otimes_{j=1}^d\bz^{(j)}\rangle_{\C}$ is real and non negative and it is the associated \emph{Hermitian singular value}.
\end{definition}

We discuss now the notion of Hermitian singular vector tuples for matrices, i.e.\ $d=2$.

\begin{example}
For a matrix $A\in\M_{n}^{m}$, the existence of a Hermitian singular vector tuple translates in the existence of two unit vectors $\bz^{(1)}\in\C^{n}$ and $\bz^{(2)}\in\C^{m}$ such that they satisfy a condition equivalent to the one of a singular pair $(\bz^{(1)},\bar{\bz}^{(2)})$ of $A$ since there hold the equalities
\begin{align*}
      A\bar{\bz}^{(2)}&=A\times\bar{\bz}^{(2)}=\langle A,\bz^{(1)}\otimes\bz^{(2)}\rangle_{\C}\bz^{(1)}=\left((\bz^{(1)})^HA\bar{\bz}^{(2)}\right)\bz^{(1)}\\
      A^H\bz^{(1)}&=\bar{A}\times\bz^{(1)}=\langle A,\bz^{(1)}\otimes\bz^{(2)}\rangle_{\C}\bar{\bz}^{(2)}=\left((\bz^{(1)})^HA\bar{\bz}^{(2)}\right)\bar{\bz}^{(2)}
\end{align*}
and the critical points of the Hermitian distance function are of the form 
\begin{equation*}
\langle A,\bz^{(1)}\otimes\bz^{(2)}\rangle_{\C}\ \bz^{(1)}\otimes\bz^{(2)}=\left((\bz^{(1)})^HA\bar{\bz}^{(2)}\right)\bz^{(1)}(\bar{\bz}^{(2)})^H.
\end{equation*}  
In other words, $(\bz^{(1)},\bz^{(2)})$ is a Hermitian singular vector tuples of $A$ if and only if $(\bz^{(1)},\bar{\bz}^{(2)})$ is a singular pair of $A$ and the value $(\bz^{(1)})^HA\bar{\bz}^{(2)}$ is both a Hermitian singular value and singular value of $A$ in the classical sense.
\end{example}

\begin{remark}
The singular values provided by equations \eqref{singt} do not coincide with classical singular values for non real matrices and non real vectors. Moreover, if a Hermitian singular vector tuples of a tensor $\mathcal{U}\in\T^{\bn}$ is real valued, then that is also a singular vector tuple of $\mathcal{U}$ and the relative Hermitian singular value is also a singular value. These considerations suggest that the Hermitian singular values could be a more natural concept to look at.

Note that, for a given Hermitian singular value $0\leq\mu\in\R$ with Hermitian singular vector tuple $(\bz^{(1)},\ldots,\bz^{(d)})$, then there exist at least $d$ Hermitian singular vector tuples of $\mu$ given by $(\xi\bz^{(1)},\ldots,\xi\bz^{(d)})$ where $\xi^d=1$. However, these all yield the same critical point.
\end{remark}

We obtain a direct consequence from the definition of Hermitian singular vector tuples.

\begin{corollary}
The tuple $(\bz^{(1)},\ldots,\bz^{(d)})$ is a Hermitian singular vector tuple of $\mathcal{U}\in\T^{\bn}$ iff 
\begin{equation*}
    \langle\mathcal{U},\otimes_{k=1}^d\bz^{(k)}\rangle_{\C}\otimes_{j=1}^d\bz^{(j)}\in\Seg\T^{\bn}
\end{equation*}
is a critical point of the Hermitian distance from $\Seg\T^{\bn}$.
\end{corollary}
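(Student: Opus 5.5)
The plan is to combine the criticality condition recalled in Section~\ref{sec:pre} with the Lagrange multiplier computation carried out just before the definition of Hermitian singular vector tuples, and to check the two implications separately.

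\textbf{Forward direction.} Suppose $(\bz^{(1)},\ldots,\bz^{(d)})$ is a Hermitian singular vector tuple and set $\mu=\langle\mathcal{U},\otimes_{k}\bz^{(k)}\rangle_{\C}\geq 0$ and $Z=\mu\otimes_k\bz^{(k)}$.

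1. Describe the tangent space. For $\mu\neq0$, $T_ZX$ is spanned over $\C$ by $\otimes_k\bz^{(k)}$ and the tensors $\bz^{(1)}\otimes\cdots\otimes\bv^{(k)}\otimes\cdots\otimes\bz^{(d)}$ with $\bv^{(k)}\in\C^{n_k}$.
2. Rewrite criticality. By \cite[Lemma 2.1]{f2025}, $Z$ is critical iff $\bar{\mathcal{U}}-\bar Z\perp_{\R}T_ZX$. This means the complex pairings $\langle \mathcal{U}-Z,T\rangle_{\C}$ vanish for every $T$ in the spanning set, because $T_ZX$ is a complex subspace and so the real and imaginary parts are both captured.
3. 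Check the pairing with $\otimes_k\bz^{(k)}$. Since the $\bz^{(k)}$ are unit vectors, $\langle\mathcal{U}-Z,\otimes_k\bz^{(k)}\rangle_{\C}=\mu-\mu=0$.
4. Check the pairing with the $k$-th slot tensors. It equals $\langle\mathcal{U}\times(\otimes_{j\neq k}\bar{\bz}^{(j)})-\mu\bz^{(k)},\bv^{(k)}\rangle_{\C}$, which vanishes by \eqref{singvec}.

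Hence $Z$ is critical. The degenerate case $\mu=0$ (i.e.\ $Z=0$) must be handled separately, by regarding the statement as concerning regular points. Note, though, that $\mu=0$ forces $\mathcal{U}\times(\otimes_{j\neq k}\bar\bz^{(j)})=0$ for all $k$, so this case needs an explicit convention.

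\textbf{Converse.} Let $Z\in X$ be a nonzero critical point. Write $Z=\mu\otimes_k\bz^{(k)}$ with unit vectors $\bz^{(k)}$ and $\mu\in\C$.

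1. Normalize the phase. Rescale the vectors by $\xi\in\Sp^1$ as in the remark following \eqref{eqsing}, so that $\mu$ becomes real and nonnegative. The phase can be absorbed into a single factor, since only the product matters.
2. Identify $\mu$. The pairing against $\otimes_k\bz^{(k)}$ in step 2 above gives $\mu=\langle\mathcal{U},\otimes_k\bz^{(k)}\rangle_{\C}$.
3. Recover \eqref{singvec}. The pairings against arbitrary $\bv^{(k)}$ in slot $k$ give $\mathcal{U}\times(\otimes_{j\neq k}\bar\bz^{(j)})=\mu\bz^{(k)}$, which is exactly \eqref{singvec}.

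Thus the tuple is a Hermitian singular vector tuple, and its image is $Z$. Equivalently, one can note that these conditions are precisely the vanishing of the derivatives of $L$ computed above, i.e.\ \eqref{eqsing} with $\lambda_k=\mu$.

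\textbf{Main obstacle.} The subtle step is the careful translation of the real orthogonality $\perp_{\R}$ with conjugates into complex linear equations in the slot $\bv^{(k)}$. One has to check that testing against both $\bv$ and $i\bv$ forces the full complex pairing to vanish, and keep track of which factors are conjugated. The remaining work is bookkeeping of the phase normalization.
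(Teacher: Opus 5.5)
Your proof is correct, but it reaches the equations by a different mechanism than the paper. The paper treats the corollary as immediate from the computation preceding the definition. It eliminates $\mu$ (taken real and nonnegative) and writes the Lagrange conditions for the real part of $\langle\mathcal{U},\otimes_k\bz^{(k)}\rangle_{\C}$ on the product of unit spheres, which gives \eqref{eqsing}. It then shows $\lambda_k=\langle\mathcal{U},\otimes_j\bz^{(j)}\rangle_{\C}$ for every $k$ and fixes the phase via $\bz^{(k)}\mapsto\bar\xi\bz^{(k)}$.

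You instead apply the criterion $\bar{\mathcal U}-\bar Z\perp_{\R}T_ZX$ directly on the Segre cone, using an explicit tangent space at $Z\neq 0$. The radial direction $\otimes_k\bz^{(k)}$ does the job of the paper's computation of $\lambda_k$, and the slot directions give \eqref{singvec}.

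Your route proves exactly the statement about regular critical points of $q_{\mathcal U}$ on $X$ in the sense of Section~\ref{sec:pre}. The paper's reduction is phrased as a chain of optimizations, and the passage from critical points of $L$ to critical points of the distance on $X$ is left implicit. Your route also shows precisely where the vertex $Z=0$ (removed by the saturation in \eqref{cond}) and the phase normalization enter. The paper's route is shorter and casts the equations as a variational problem on spheres in the style of Lim, which is what feeds the next proposition identifying the largest Hermitian singular value with $\|\mathcal U\|_S$.

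Two minor points. First, in the paper's notation $\langle\cdot,\cdot\rangle_{\R}$ is the complex bilinear pairing, so $\bar{\mathcal U}-\bar Z\perp_{\R}T_ZX$ already reads $\overline{\langle\mathcal U-Z,T\rangle_{\C}}=0$. The $\bv$ versus $i\bv$ check you call the main obstacle is therefore unnecessary.

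Second, your converse shows that every nonzero critical point is the image of some phase-rotated Hermitian singular vector tuple, and this is the right formulation. For a fixed tuple the literal converse can fail. Take $\mathcal U=I_2\in\M_2$ and the tuple $(e_1,ie_1)$: then $\langle\mathcal U,e_1\otimes ie_1\rangle_{\C}=-i$, so the tuple is not a Hermitian singular vector tuple, yet its image $e_1\otimes e_1$ is a critical point.
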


The next proposition follows from the observation that one of the singular vector tuple is the maximizer of $|\langle\mathcal{U},\otimes_{k=1}^d\bz^{(k)}\rangle_{\C}|$.

\begin{proposition}
The biggest Hermitian singular values of a tensor $\mathcal{U}\in\T^{\bn}$ is $\|\mathcal{U}\|_{S}$.
\end{proposition}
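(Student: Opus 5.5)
The plan is to compare two finite maxima over the same compact set. Let $K\coloneqq\lbrace(\bz^{(1)},\ldots,\bz^{(d)})\colon\|\bz^{(k)}\|_2=1\rbrace$, a product of unit spheres, and let $f(\bz^{(1)},\ldots,\bz^{(d)})\coloneqq|\langle\mathcal{U},\otimes_{k=1}^d\bz^{(k)}\rangle_{\C}|^2$. Since $K$ is compact and $f$ is continuous, $f$ attains its maximum on $K$. By the definition of the spectral norm, this maximum equals $\|\mathcal{U}\|_S^2$.

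First I show that a maximizer yields a Hermitian singular vector tuple. Take a maximizer $(\bz^{(1)},\ldots,\bz^{(d)})\in K$. Multiplying each vector by a suitable $\xi\in\Sp^1$, as in the discussion preceding the definition, leaves $f$ unchanged and makes $\langle\mathcal{U},\otimes_{k}\bz^{(k)}\rangle_{\C}$ real and nonnegative. The point is then also a maximizer of the real Lagrangian objective $\langle\mathcal{U},\otimes\bz^{(k)}\rangle_{\C}+\overline{\langle\mathcal{U},\otimes\bz^{(k)}\rangle_{\C}}$ on $K$. Indeed, for any point of $K$ this objective is $2\,\mathrm{Re}\langle\mathcal{U},\otimes\bz^{(k)}\rangle_{\C}\le 2\sqrt{f}$, and equality holds at our rotated maximizer.

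The constraints $\|\bz^{(k)}\|_{\C}^2=1$ have linearly independent real gradients: each involves a disjoint block of variables and is nonzero on the sphere. So the real Lagrange multiplier rule applies. The Lagrangian computation already carried out gives exactly equations \eqref{eqsing}, and the subsequent argument shows $\lambda_k=\langle\mathcal{U},\otimes_j\bz^{(j)}\rangle_{\C}$ for every $k$. Hence the tuple satisfies \eqref{singvec}, with Hermitian singular value $\mu=\sqrt{\max_K f}=\|\mathcal{U}\|_S$.

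Conversely, every Hermitian singular value is bounded by the spectral norm. If $\mu$ comes from a Hermitian singular vector tuple $(\bz^{(1)},\ldots,\bz^{(d)})$, then by definition $\mu=\langle\mathcal{U},\otimes_j\bz^{(j)}\rangle_{\C}$ with unit vectors $\bz^{(j)}$. Therefore $\mu\le\max_K|\langle\mathcal{U},\otimes\bz^{(k)}\rangle_{\C}|=\|\mathcal{U}\|_S$. Combining the two directions, the largest Hermitian singular value is $\|\mathcal{U}\|_S$. By Proposition~\ref{equalnorm} it also equals the induced norm $\|\mathcal{U}\|_2$.

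The main obstacle is the first step, justifying that a maximizer of the modulus satisfies the critical equations. The difficulty is that $|\cdot|$ is not smooth where it vanishes, and that the Lagrangian in the text was written for the real part. I handle this by the phase rotation described above, which reduces to maximizing the smooth real function $2\,\mathrm{Re}\langle\mathcal{U},\otimes\bz^{(k)}\rangle_{\C}$. The degenerate case $\mathcal{U}=0$ is trivial, since then every value involved is zero.
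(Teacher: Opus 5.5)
Your proposal is correct and follows the paper's own one-line argument: a maximizer of $|\langle\mathcal{U},\otimes_{k=1}^d\bz^{(k)}\rangle_{\C}|$ over unit vectors is a Hermitian singular vector tuple, and this maximum is $\|\mathcal{U}\|_S$ by the definition of the spectral norm. You also make explicit what the paper leaves implicit: the phase normalization that replaces the non-smooth modulus by the smooth objective $2\,\mathrm{Re}\langle\mathcal{U},\otimes_{k}\bz^{(k)}\rangle_{\C}$, the constraint qualification needed for the Lagrange multipliers, and the reverse bound $\mu\le\|\mathcal{U}\|_S$ for every Hermitian singular value $\mu$.
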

\begin{proof}
The statement follows considering the maximization problem and any of the definition of the norm, see Proposition~\ref{equalnorm}.
\end{proof}

\begin{corollary}\label{disnorm}
In $\T^{\bn}$ it holds the inequality of norms $\|\phantom{U}\|_{S}\leq\|\phantom{U}\|_{\C}$.
\end{corollary}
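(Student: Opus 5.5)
The plan is to bound the spectral norm directly by Cauchy--Schwarz, using the variational description of $\|\phantom{U}\|_{S}$. By definition,
\begin{equation*}
\|\mathcal{U}\|_{S}=\max_{\|\bz^{(k)}\|_2=1}\left|\langle\mathcal{U},\otimes_{k=1}^d\bz^{(k)}\rangle_{\C}\right|,
\end{equation*}
so it suffices to show $|\langle\mathcal{U},\otimes_{k=1}^d\bz^{(k)}\rangle_{\C}|\leq\|\mathcal{U}\|_{\C}$ for every tuple of unit vectors $\bz^{(1)},\ldots,\bz^{(d)}$.

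\textbf{Step 1.} By the Cauchy--Schwarz inequality for the Bombieri--Weyl Hermitian inner product,
\begin{equation*}
|\langle\mathcal{U},\otimes_{k}\bz^{(k)}\rangle_{\C}|\leq\|\mathcal{U}\|_{\C}\,\|\otimes_{k}\bz^{(k)}\|_{\C}.
\end{equation*}

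\textbf{Step 2.} By the defining multiplicativity of the Bombieri--Weyl form on decomposable tensors,
\begin{equation*}
\|\otimes_{k}\bz^{(k)}\|_{\C}^2=\prod_{k=1}^d\|\bz^{(k)}\|_2^2=1.
\end{equation*}

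Taking the maximum over all tuples of unit vectors then gives $\|\mathcal{U}\|_{S}\leq\|\mathcal{U}\|_{\C}$. By Proposition~\ref{equalnorm}, the same inequality holds for the induced norm $\|\phantom{U}\|_{2}$.

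An alternative route, closer to the preceding computation in the paper, is also available. The minimization identity above shows that
\begin{equation*}
\min_{Z\in X}\|\mathcal{U}-Z\|_{\C}^2=\|\mathcal{U}\|_{\C}^2-\max_{\|\bz^{(k)}\|_{\C}=1}\left|\langle\mathcal{U},\otimes_{k}\bz^{(k)}\rangle_{\C}\right|^2
\end{equation*}
(the sign in the displayed chain should be read this way). Since the left-hand side is nonnegative and, by the preceding proposition, the maximum on the right equals $\|\mathcal{U}\|_S^2$, the inequality follows at once.

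There is no real obstacle here. The only points to check are that the normalization of the Bombieri--Weyl form makes decomposable unit tensors have norm one, and that the sign convention in the earlier chain of equalities is read correctly. For $d=2$, the result agrees with Remark~\ref{matrixnorm}: $\sigma_1^2\leq\sum_k\sigma_k^2$.
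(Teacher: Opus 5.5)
Your proof is correct. Your main argument differs from the paper's, while your ``alternative route'' is essentially what the paper does.

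The paper takes the critical point $Z=\|\mathcal{U}\|_{S}\otimes_{k=1}^d\bz^{(k)}$ attached to the largest Hermitian singular value and expands $0\leq\|\mathcal{U}-Z\|_{\C}^2=\|\mathcal{U}\|_{\C}^2-\|\mathcal{U}\|_{S}^2$. This leans on the preceding proposition: a maximizing tuple exists, and its value can be phase-rotated to be real and nonnegative.

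Your Cauchy--Schwarz argument needs only the multiplicativity of the Bombieri--Weyl form on decomposable tensors. It therefore bypasses the singular-vector machinery and the phase normalization entirely, and would work even if the supremum were not attained.

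What the paper's route buys is an identity instead of an inequality. The defect $\|\mathcal{U}\|_{\C}^2-\|\mathcal{U}\|_{S}^2$ is exactly the squared Hermitian distance from $\mathcal{U}$ to $\Seg\T^{\bn}$, i.e.\ the geometric measure of entanglement, which is the viewpoint of that section. Expanding $\|\mathcal{U}-\mu\,\otimes_k\bz^{(k)}\|_{\C}^2\geq 0$ at the optimal $\mu$ is just the standard proof of Cauchy--Schwarz, so the two are the same inequality in different clothing.

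Your reading of the sign in the displayed chain, $\min_{Z\in X}\|\mathcal{U}-Z\|_{\C}^2=\|\mathcal{U}\|_{\C}^2-\max|\langle\mathcal{U},\otimes_k\bz^{(k)}\rangle_{\C}|^2$, is the right one. The paper's proof of the corollary avoids that chain by computing $\|\mathcal{U}-Z\|_{\C}^2$ directly for the specific $Z$.

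One small point: in your alternative route, the maximum equals $\|\mathcal{U}\|_{S}^2$ by the definition of the spectral norm, not by the preceding proposition.
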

\begin{proof}
Let $\mathcal{U}\in\T^{\bn}$ and take $Z=\|\mathcal{U}\|_{S}\otimes_{k=1}^d\bz^{(k)}\in\Seg\T^{\bn}$ the critical point associated to the largest Hermitian singular value of $\mathcal{U}$, the assertion follows from the evaluation
\begin{equation*}
    0\leq\|\mathcal{U}-Z\|_{\C}^2=\|\mathcal{U}\|_{\C}^2-\langle\mathcal{U},Z\rangle_{\C}-\overline{\langle\mathcal{U},Z\rangle_{\C}}+\|Z\|_{\C}^2=\|\mathcal{U}\|_{\C}^2-\|\mathcal{U}\|_{S}^2.
\end{equation*}
\end{proof}

There is an established general theory about complex singular vector tuple with vanishing singular value. Such tuples are solutions of the system 
\begin{equation}\label{d0tuple}
    \langle\mathcal{U},\otimes_{k=1}^d\bz^{(k)}\rangle_{\C}=0\quad\text{and}\quad\nabla_{\bar{\bz}^{(k)}}\langle\mathcal{U},\otimes_{k=1}^d\bz^{(k)}\rangle_{\C}=0\quad\text{for $k=1,\ldots,d$}.
\end{equation}
The number of independent equations of \eqref{d0tuple} is one more than the number of variables, so the variables can be eliminated to get a polynomial equation in the coefficients of $\mathcal{U}$.

\begin{theorem}[Cayley]
There exists a polynomial function $\hdet(\mathcal{U})$ in the coefficients of $\mathcal{U}$, such that the equations \eqref{d0tuple} have a solution with all $\bz^{(k)}$ nonzero for $k=1,\ldots,d$ if and only if $\hdet(\mathcal{U})=0$.
\end{theorem}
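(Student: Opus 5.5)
We plan to prove Cayley's theorem by elimination theory. The system \eqref{d0tuple} is multihomogeneous in the tuple $(\bz^{(1)},\ldots,\bz^{(d)})$: each equation $\nabla_{\bar{\bz}^{(k)}}\langle\mathcal{U},\otimes_{j}\bz^{(j)}\rangle_{\C}=0$ is linear in every $\bz^{(j)}$ with $j\neq k$ and does not involve $\bz^{(k)}$. Moreover, it is linear in the coefficients of $\mathcal{U}$ (or of $\bar{\mathcal{U}}$, which makes no difference once we set $\mathcal{A}=\bar{\mathcal{U}}$ and regard $\langle\mathcal{U},\otimes\bz^{(j)}\rangle_{\C}$ as the multilinear form $F_{\mathcal{A}}(\bz^{(1)},\ldots,\bz^{(d)})$). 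The condition ``all $\bz^{(k)}$ nonzero'' says exactly that the solution defines a point of $\Pro^{n_1-1}\times\cdots\times\Pro^{n_d-1}$. By Euler's identity, $F_{\mathcal{A}}=\bz^{(k)}\cdot\nabla_{\bz^{(k)}}F_{\mathcal{A}}$, so the first equation is implied by the others. The system therefore describes the singular locus of the multilinear hypersurface $\{F_{\mathcal{A}}=0\}$ in the product of projective spaces.

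We then consider the incidence variety
\[
\Sigma=\{(\mathcal{A},[\bz^{(1)}],\ldots,[\bz^{(d)}])\;:\;\nabla_{\bz^{(k)}}F_{\mathcal{A}}=0 \text{ for all } k\}\subseteq\T^{\bn}\times\prod_k\Pro^{n_k-1}.
\]
The second projection $\Sigma\to\prod_k\Pro^{n_k-1}$ has fibers that are linear subspaces of $\T^{\bn}$ of constant codimension $\sum_k n_k$. Indeed, by unitary (or $GL$) equivariance we may move any point to $(e_1,\ldots,e_1)$, where the conditions say that the coefficients $a_{1\ldots1 j 1\ldots 1}$ vanish; there are $\sum_k n_k-(d-1)$ such coefficients, since $a_{1\ldots1}$ is repeated $d$ times. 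Hence $\Sigma$ is an irreducible vector bundle over an irreducible base, so it is irreducible of dimension
\[
\prod n_k-\Big(\sum n_k-d+1\Big)+\sum(n_k-1)=\prod n_k-1 .
\]
The first projection is proper, because projective varieties are complete. So its image, the hyperdeterminantal locus $\nabla$, is closed and irreducible of dimension at most $\prod n_k-1$.

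When $\nabla$ is a hypersurface, i.e.\ when the projection is generically finite (this is the boundary-format condition $n_k-1\le\sum_{j\neq k}(n_j-1)$ of Gelfand--Kapranov--Zelevinsky), its defining irreducible polynomial is $\hdet$. Outside that range $\nabla$ has higher codimension, and we adopt the convention of the literature (take $\hdet$ to be the generator of the ideal when principal, or equivalently set $\hdet\equiv1$). Either way the zero locus of $\hdet$ is exactly $\nabla$.

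The main obstacle is the dimension claim: showing that the projection $\Sigma\to\T^{\bn}$ is generically finite, so that $\nabla$ really is a hypersurface. We would attempt this by exhibiting one explicit tensor in $\nabla$ whose singular point is isolated, for instance $\mathcal{A}$ with the $(e_1,\ldots,e_1)$ coefficients vanishing and generic elsewhere. At that point we would check that the tangent map (a Hessian-type matrix) has full rank, deferring to GKZ's theorem for the general inequality criterion.
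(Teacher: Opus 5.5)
The paper does not prove this statement itself. It quotes Cayley's theorem and refers to \cite{gkz1994}, and your incidence-variety argument is the standard route behind that reference. The first half is sound. $\Sigma$ is a vector bundle over $\Pro^{n_1-1}\times\cdots\times\Pro^{n_d-1}$ whose fibres have codimension $\sum_k n_k-d+1$, not $\sum_k n_k$ as you first write; your coefficient count and your dimension formula do use the correct number. Hence $\nabla=\pi_1(\Sigma)$ is a closed irreducible cone of dimension at most $\prod_k n_k-1$. A small point on conjugation: substituting $\bw=\bar{\bz}$ directly is cleaner than passing to $\bar{\mathcal{U}}$, because it gives a polynomial in $\mathcal{U}$ at once. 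Otherwise you need to observe that $\nabla$ is defined over $\mathbb{Q}$.

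The genuine gap is the sentence ``either way the zero locus of $\hdet$ is exactly $\nabla$''. When $\nabla$ is not a hypersurface this is false, and in fact the statement cannot hold at all:
\begin{itemize}
\item $\nabla$ always contains $\mathcal{U}=0$, so the convention $\hdet\equiv 1$ gives the empty set instead of $\nabla$.
\item The ideal of $\nabla$ is then never principal.
\item By Krull's principal ideal theorem, every component of the zero set of a nonconstant polynomial has codimension one, so no polynomial whatsoever cuts out $\nabla$.
\end{itemize}
A concrete case is $d=2$, $\bn=(2,5)$. There \eqref{d0tuple} says that $A$ has a nonzero left kernel vector, so $\nabla=\rk_1(\M_2^5)$, which has codimension $4$.

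You therefore have to add the hypothesis \eqref{condhyp}, as the paper implicitly does in the paragraph after the theorem. Under that hypothesis the whole statement reduces to ``$\nabla$ is a hypersurface'', which is precisely the step you leave open.

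Your sketch for that step is legitimate. Every component of every fibre of $\Sigma\to\nabla$ has dimension at least $\dim\Sigma-\dim\nabla$. So a single tensor in $\nabla$ with an isolated singular point forces $\dim\nabla=\prod_k n_k-1$. At $(e_1,\ldots,e_1)$, isolatedness follows if a certain symmetric matrix of size $\sum_k(n_k-1)$ is nonsingular for generic choices. That matrix is the Hessian with zero diagonal blocks and off-diagonal blocks made of the free coefficients $a_{1\ldots p\ldots q\ldots 1}$ with $p,q\geq 2$. This is exactly where \eqref{condhyp} enters: if $n_1-1>\sum_{k\geq 2}(n_k-1)$, the first block row has too few nonzero columns and the matrix is always singular. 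Until that linear-algebra check is carried out for all formats satisfying \eqref{condhyp}, your argument, like the paper, rests on GKZ for the decisive step.
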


The function $\hdet$ is called the \emph{hyperdeterminant}. It is a special case of the discriminant of a function of several variables, see \cite{gkz1994}. The hyperdeterminant has appeared in the theory of multipartite entanglement, for example in \cite{m2003}.

\begin{remark}
The hyperderminant also arises in the Euclidean case since the set of tensors with zero as a Hermitian singular value is defined by the same equations \eqref{d0tuple} of the set of tensors with zero as a singular value.
\end{remark}

It is known that the dual variety of $\Seg\T^{\bn}$ is a hypersurface if and only if it holds the inequality
\begin{equation}\label{condhyp}
	2\max\lbrace n_1,\ldots,n_d\rbrace\leq n_1+n_2+\ldots+n_d-d+2,
\end{equation}
see \cite[Chapter XIV]{gkz1994}. In this case, the dual variety is defined by the hyperdeterminant. In particular, from \cite[Theorem 3.34]{f2025} and the fact that the varieties are all defined by real polynomials we get the following result.

\begin{corollary}
Let $\mathcal{U}$ be a tensor and $Z$ be a critical point of the Hermitian distance from $\Seg\T^{\bn}$, if inequality \eqref{condhyp} holds then $\hdet(\mathcal{U}-Z)=0$.
\end{corollary}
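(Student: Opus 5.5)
We prove that $\mathcal{U}-Z$ lies in the dual variety of $\Seg\T^{\bn}$. Under inequality \eqref{condhyp}, that dual variety is the hypersurface $V(\hdet)$, so $\hdet(\mathcal{U}-Z)=0$ follows. We first argue directly, then note how \cite[Theorem 3.34]{f2025} gives the same conclusion.

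\textbf{Setup.} Assume $Z\neq 0$; otherwise use $\mathcal{U}$ directly. By the corollary on Hermitian singular vector tuples we can write
\begin{equation*}
Z=\mu\,\bz^{(1)}\otimes\ldots\otimes\bz^{(d)}, \qquad \mu=\langle\mathcal{U},\otimes_{k=1}^d\bz^{(k)}\rangle_{\C}\geq 0,
\end{equation*}
where $(\bz^{(1)},\ldots,\bz^{(d)})$ is a Hermitian singular vector tuple. Set $\mathcal{A}=\mathcal{U}-Z$. The goal is to check that $\mathcal{A}$ satisfies the system \eqref{d0tuple} at a point with all vectors nonzero. Cayley's theorem then gives $\hdet(\mathcal{A})=0$.

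\textbf{Step 1: the value vanishes.} Since $\|\bz^{(k)}\|_{\C}=1$ for every $k$,
\begin{equation*}
\langle\mathcal{A},\otimes_k\bz^{(k)}\rangle_{\C}=\mu-\mu\prod_k\|\bz^{(k)}\|_{\C}^2=0.
\end{equation*}

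\textbf{Step 2: the gradients vanish.} For each $k$, the gradient of $\langle\mathcal{A},\otimes_j\bz^{(j)}\rangle_{\C}$ with respect to the conjugated variables of the $k$-th factor is, up to the paper's conjugation conventions, $\mathcal{A}\times(\otimes_{j\neq k}\bar{\bz}^{(j)})$. Expanding,
\begin{equation*}
\mathcal{A}\times(\otimes_{j\neq k}\bar{\bz}^{(j)})=\mathcal{U}\times(\otimes_{j\neq k}\bar{\bz}^{(j)})-\mu\prod_{j\neq k}\|\bz^{(j)}\|_{\C}^2\,\bz^{(k)}=\mu\bz^{(k)}-\mu\bz^{(k)}=0,
\end{equation*}
where the first term equals $\mu\bz^{(k)}$ by the defining equation \eqref{singvec}. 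So $(\bz^{(1)},\ldots,\bz^{(d)})$, all unit vectors and hence nonzero, solves \eqref{d0tuple} for $\mathcal{A}$.

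\textbf{Main obstacle.} \eqref{d0tuple} mixes $\bz$ with $\bar{\bz}$, whereas Cayley's theorem and the identification of the dual variety with $V(\hdet)$ concern the holomorphic polynomial system. One must match these.

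The system \eqref{d0tuple} only involves $\bar{\bz}^{(k)}$ through the multilinear form. If we set $\by^{(k)}=\bar{\bz}^{(k)}$, it becomes exactly the holomorphic system "the multilinear form of $\mathcal{A}$ and all its partial derivatives vanish at $(\by^{(1)},\ldots,\by^{(d)})$", with every $\by^{(k)}\neq 0$. That is precisely the condition that the hyperplane defined by $\mathcal{A}$ is tangent to $\Seg\T^{\bn}$ at $\otimes_k\by^{(k)}$. The conjugation therefore only changes the point of tangency, not membership of $\mathcal{A}$ in the dual variety.

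The same conclusion can be read off from \cite[Theorem 3.34]{f2025}: the vector $\bar{\bu}-\bar{\bz}$ is normal at a conjugate point of $\overline{X}$. Since the Segre variety is defined by real polynomials, $\overline{X}=X$, so $\mathcal{A}$ is again a normal vector of $X$. In either form, $\mathcal{A}\in X^\vee=V(\hdet)$ under \eqref{condhyp}.
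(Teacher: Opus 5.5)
Your proof is correct, but your main argument takes a more explicit route than the paper; your final paragraph is essentially the paper's proof.

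The paper does no computation. It invokes \cite[Theorem 3.34]{f2025}, which ties critical points of the Hermitian distance to the dual variety, reflecting the normality condition $\bar{\bu}-\bar{\bz}\perp_{\R}T_{\bz}X$ recalled in the preliminaries. It then uses that $\Seg\T^{\bn}$ is defined by real polynomials, so conjugation does not affect membership in $V(\hdet)$.

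You instead verify by hand, from \eqref{singvec}, that the Hermitian singular vector tuple $(\bz^{(1)},\ldots,\bz^{(d)})$ itself solves \eqref{d0tuple} for $\mathcal{A}=\mathcal{U}-Z$. You then apply Cayley's theorem, which under \eqref{condhyp} turns such a nontrivial solution into $\hdet(\mathcal{A})=0$.

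The paper's route is uniform: it applies verbatim to any real-defined variety whose dual is a hypersurface, but it is a black box. Yours is self-contained and produces an explicit witness: the hyperplane $\{T:\mathcal{A}\times T=0\}$ is tangent to the Segre variety at $\otimes_k\bar{\bz}^{(k)}$. It also shows that realness enters only through the fact that the conjugate of a rank-one tensor is again rank one. Equivalently, \eqref{d0tuple} is a holomorphic system in the $\bar{\bz}^{(k)}$.

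A few small remarks:
\begin{itemize}
\item Step~1 is redundant. It follows from Step~2 by pairing with $\bar{\bz}^{(k)}$ (Euler's relation).
\item The case split on $Z=0$ is unnecessary. Its fallback ``use $\mathcal{U}$ directly'' proves nothing by itself, since $\hdet(\mathcal{U})\neq 0$ for generic $\mathcal{U}$. Your computation never uses $\mu\neq 0$, so it already covers a tuple with $\mu=0$.
\item In your last paragraph the conjugations are slightly misplaced. It is $\bar{\bu}-\bar{\bz}$ that is normal to $X$ at $\bz$ itself; equivalently, $\bu-\bz$ is normal to $\overline{X}$ at $\bar{\bz}$. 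With $\overline{X}=X$ the conclusion is unchanged.
\end{itemize}
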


Let $X=\Seg\T_3^{2}\subseteq\T_3^{2}$. The Hermitian distance minimization problem is widely investigated on this variety, see for example \cite{wg2003}. In \cite{hs2009} the authors investigated the Hermitian distance problem on $X$, they proved that the square of a singular value of a tensors must be the zero of a polynomial of degree $12$, thus providing a bound for the number of critical points which in turn yields $\max\HD(X)\leq 12$. In the same work, for a subset of tensors in $\T_3^{2}$ the bound is refined to $5$ and the formulas to compute the Hermitian singular values along with the Hermitian singular vector tuples is provided. In particular, from that work, and the equality $\vHD(X)=8$ that we will see below, follows the inequality $\max\HD(X)\geq 6$.

\begin{proposition}
For $\Seg\T_3^{2}\subseteq\T_3^{2}$ there hold 
\begin{equation*}
    \vHD(\Seg\T_3^{2})=8\qquad\text{and}\qquad\lbrace 4,6,8\rbrace\subseteq\HD(\Seg\T_3^{2})\subseteq\lbrace 2,4,6,8\rbrace.
\end{equation*}
\end{proposition}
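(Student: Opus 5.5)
The plan is to treat the two claims separately: first compute $\vHD(X)$ for $X=\Seg\T_3^{2}$ by an explicit count of solutions of the Hermitian critical ideal, and then deduce the bounds on $\HD(X)$ from the vHD polynomial together with explicit witnesses.

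\textbf{Computing $\vHD(X)=8$.} As in the proof of Lemma~\ref{matrices}, I parametrize a point $(\bz,\bw)\in X\times\overline{X}$ as $\bz=\lambda_{\bz}\,\bz^{(1)}\otimes\bz^{(2)}\otimes\bz^{(3)}$ and $\bw=\lambda_{\bw}\,\bw^{(1)}\otimes\bw^{(2)}\otimes\bw^{(3)}$. I then differentiate $\langle\bz-\bu,\bw-\bv\rangle_{\R}$ in each factor. This gives the complexified Hermitian singular vector equations
\begin{equation*}
\bu\times(\bw^{(j)}\otimes\bw^{(l)})=\lambda_{\bz}\langle\bz^{(j)},\bw^{(j)}\rangle_{\R}\langle\bz^{(l)},\bw^{(l)}\rangle_{\R}\,\bz^{(k)},
\end{equation*}
together with the symmetric equations obtained by exchanging $(\bu,\bz)$ with $(\bv,\bw)$, for $\{j,k,l\}=\{1,2,3\}$. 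For $v=\bar u$ these reduce to \eqref{singvec}.

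Projectivizing, the unknowns become points $([\bz^{(k)}],[\bw^{(k)}])\in(\Pro^1)^6$. The scalars $\lambda_{\bz},\lambda_{\bw}$ are then recovered linearly, exactly as at the end of the proof of Lemma~\ref{matrices}. The conditions above say that $\bu\times(\bw^{(j)}\otimes\bw^{(l)})$ is proportional to $\bz^{(k)}$, and similarly with the roles of $\bu$ and $\bv$ exchanged. Each such condition is a single equation of multidegree $(1,1,1)$ in the relevant three $\Pro^1$ factors, so there are $6$ equations on the $6$-dimensional space $(\Pro^1)^6$.

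I will count the intersection by multihomogeneous Bézout. Equivalently, I compute $\int c_1^{\,6}$ of the corresponding sum of line bundles, and then subtract the contributions supported on the saturated locus (isotropic points with $\langle\bz^{(k)},\bw^{(k)}\rangle_{\R}=0$). The expected answer is $8$. As a safeguard, I will confirm the count by a Gröbner basis computation of the saturated ideal \eqref{cond} for random $(\bu,\bv)$; by \cite[Lemma 3.1]{f2025} this number is constant on a Zariski-open set. \textbf{This excess-intersection bookkeeping is the main obstacle}; if it is too delicate, I will rely on the symbolic computation for generic data.

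\textbf{The inclusion $\HD(X)\subseteq\{2,4,6,8\}$.} Critical points correspond to Hermitian singular values $\mu\geq0$, and $t^2=\mu^2$ is a root of $\HDp_{X,\bu}(t^2)$, which has degree $\vHD(X)=8$ in $t^2$. For $\bv=\bar\bu$ the coefficients are real, since the variety is defined over $\R$. Generically the roots are simple, so the number of real roots has the parity of $8$. I then need to check that every real root is nonnegative and actually corresponds to a critical point. For this I will use that $\mu^2=|\langle\mathcal U,\otimes\bz^{(k)}\rangle_{\C}|^2$ and that a real root lifts to a genuine Hermitian singular tuple, as in \cite[Section 5]{f2025}. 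Granting this, the count is even and at most $8$. It is positive because the minimum of the distance is attained, so it is at least $2$.

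\textbf{The inclusion $\{4,6,8\}\subseteq\HD(X)$.} I will exhibit explicit tensors $\mathcal U$, for instance diagonal and GHZ/W-type perturbations in the spirit of \cite{hs2009}, that have respectively $4$, $6$ and $8$ distinct critical values. I then check that each lies off the HD discriminant of \cite[Section 4]{f2025}. Since the number of critical points is locally constant on the complement of that discriminant, the same count persists on a Euclidean-open neighbourhood of each example, so each of $4,6,8$ belongs to $\HD(X)$.
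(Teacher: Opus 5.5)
There is a genuine gap in the lower inclusion $\lbrace 4,6,8\rbrace\subseteq\HD(\Seg\T_3^2)$. You never produce a witness, and the candidates you name cannot serve as one. The diagonal/GHZ tensors $a\,e_{000}+b\,e_{111}$ and the W tensor are fixed by positive-dimensional tori of diagonal matrices in $\U(2)^3$, and these tori act non-trivially on $(\Pro^1)^3$. These tensors also have critical points off the fixed locus of the tori (e.g.\ a Hermitian eigenvector with both coordinates non-zero). So their critical sets are positive-dimensional, and they cannot themselves witness any element of $\HD$. Which perturbation yields $4$, $6$ or $8$ is exactly the missing content.

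For $8$, the paper gives a structural argument that reuses the Veronese results. The symmetric tensor $\mathcal U=z_1^3+6z_1^2z_2+9z_1z_2^2+5z_2^3$ of Proposition~\ref{extenssym} has $5$ Hermitian eigenvectors. By Proposition~\ref{segrevero} these give $5$ critical points of the form $(\bz,\bz,\bz)$. The remaining $3$ of the $8$ solutions form a set stable under permutation of the factors and under the involution $(\bz,\bw)\mapsto(\bar\bw,\bar\bz)$. An odd number of them is fixed by the involution. A single fixed one would be $S_3$-invariant, i.e.\ a sixth solution for $v_3\C^2$, which is impossible since $\vHD(v_3\C^2)=5$. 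Hence they form the orbit $(\bz^{(1)},\bz^{(1)},\bz^{(2)})$, $(\bz^{(1)},\bz^{(2)},\bz^{(1)})$, $(\bz^{(2)},\bz^{(1)},\bz^{(1)})$ of genuine critical points, giving $8$.

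For $4$ and $6$ the paper also relies on computation, but it lists explicit rational tensors realizing these counts. You must actually supply and certify such tensors. They need simple solutions so that, as you correctly observe, the count persists on a Euclidean-open set.

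The rest of your proposal is acceptable, with caveats.

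\textbf{The value $\vHD=8$.} Your fallback, a Gr\"obner computation of the saturated ideal for random $(\bu,\bv)$, is precisely what the paper does in Example~\ref{extens}. The B\'ezout route is not a routine correction. The multihomogeneous B\'ezout number of your six $(1,1,1)$-equations on $(\Pro^1)^6$ is $20$. On the locus where $\langle\bz^{(k)},\bw^{(k)}\rangle_{\R}=0$ for $k=1,2,3$, the three $\bw$-equations all reduce to $\langle\bv,\bz^{(1)}\otimes\bz^{(2)}\otimes\bz^{(3)}\rangle_{\R}=0$. The three $\bz$-equations all reduce to $\langle\bu,\bw^{(1)}\otimes\bw^{(2)}\otimes\bw^{(3)}\rangle_{\R}=0$. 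This leaves a whole curve of spurious solutions, so reaching $8$ that way requires a real excess-intersection computation.

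\textbf{The inclusion $\HD\subseteq\lbrace 2,4,6,8\rbrace$.} The paper does not spell this out, and your parity argument is right. The step you ``grant'' is best justified directly. For generic $\bv=\bar\bu$, the $8$ solutions are permuted by $(\bz,\bw)\mapsto(\bar\bw,\bar\bz)$, and the critical points are exactly its fixed points, so their number is even. The global minimum, attained at a non-zero rank-one tensor, makes it at least $2$. Also, the roots of $\HDp_{X,\bu}(t^2)$ are the squared distances $\|\mathcal U\|_{\C}^2-\mu^2$, not $\mu^2$.
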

\begin{proof}
We will compute in Example~\ref{extens} the value of the $\vHD$. Now, let $\mathcal{U}\in\T_3^2$ be symmetric with $5$ Hermitian eigenvector (see Subsection~\ref{ssec:veronese}) which we know exists from Proposition~\ref{bin}. By Proposition~\ref{segrevero}, a Hermitian eigenvector $\bz\in\C^2$ generates the Hermitian singular vector tuple $(\bz,\bz,\bz)\in(\C^2)^3$ with the same Hermitian singular value and then this tensor possesses at least $5$ critical points. Since the number of solutions of the Hermitian critical ideal must be equal to $8$ there are $3$ more solutions. Moreover, since $\mathcal{U}$ is symmetric, any permutation of a Hermitian singular vector tuple yields another one. Thus, the only possible outcome is to have $3$ complex singular vectors that are of the form $(\bz^{(1)},\bz^{(1)},\bz^{(2)})$, $(\bz^{(1)},\bz^{(2)},\bz^{(1)})$, $(\bz^{(2)},\bz^{(1)},\bz^{(1)})$ for some $\bz^{(1)}\neq\bz^{(2)}\in\C^2$ and then this tensor possesses $3$ other critical points. In particular, it holds $\max\HD(\Seg\T_3^2)=8$. A similar reasoning provides tensors with $4$ and $6$ critical points, such examples can also be easily found by testing random points. For example the tensor
\begin{equation*}
	z_{000}=1,\ z_{010}=\frac{2001}{1000},\ z_{100}=2,\ z_{110}=3,\ z_{001}=2,\ z_{011}=3,\ z_{101}=3,\ z_{111}=5
\end{equation*}
admits $8$ critical points, the tensor
\begin{equation*}
	z_{000}=\frac{8}{5},\ z_{010}=2,\ z_{100}=\frac{1}{4},\ z_{110}=\frac{8}{9},\ z_{001}=\frac{4}{5},\ z_{011}=\frac{3}{2},\ z_{101}=\frac{5}{9},\ z_{111}=\frac{9}{7}
\end{equation*}
admits $6$ critical points, and the tensor
\begin{equation*}
	z_{000}=\frac{5}{2},\ z_{010}=\frac{8}{7},\ z_{100}=\frac{6}{7},\ z_{110}=\frac{2}{9},\ z_{001}=\frac{3}{8},\ z_{011}=\frac{7}{9},\ z_{101}=\frac{1}{2},\ z_{111}=\frac{9}{7}
\end{equation*}
admits $4$ critical points.
\end{proof}

\begin{example}\label{extens}
Let $X=\Seg\T_3^2$ defined by the ideal
\begin{align*}
  I_X=\langle&z_{111}z_{221}-z_{121}z_{211},\quad z_{112}z_{222}-z_{122}z_{212},\quad z_{111}z_{212}-z_{112}z_{211}\\
  &z_{121}z_{222}-z_{122}z_{221},\quad z_{111}z_{122}-z_{112}z_{121},\quad z_{211}z_{222}-z_{212}z_{221}\\
  &z_{121}z_{212}-z_{112}z_{221},\quad z_{111}z_{222}-z_{121}z_{212},\quad z_{111}z_{222}-z_{211}z_{122}\rangle.
\end{align*}
We can one-to-one parametrize a dense subvariety of $X$ as
\begin{align*}
    &\psi\colon\C^4\longrightarrow X\subseteq\T_3^2\\
    (z_1,&z_2,z_3,z_4)\mapsto\begin{bmatrix}
        z_1\\
        1
    \end{bmatrix}\otimes\begin{bmatrix}
        z_2\\
        1
    \end{bmatrix}\otimes\begin{bmatrix}
        z_3\\
        z_4
    \end{bmatrix}
\end{align*}
and the singular locus of this parametrization is $V(z_3,z_4)\subseteq\C^4$. Critical points satisfy the equations
\begin{equation*}
    \frac{\partial}{\partial z_k}\|\psi(\bz)-\bu\|_{\C}^2=0
\end{equation*}
for $k=1,\ldots,4$. Introducing $\bw$ and $\bv$ we compute that the Hermitian critical ideal is of degree $8$ and thus $\vHD(X)=8$.
\end{example}

%----------------------------------------------------------------------------------------------------------------------------------------------

\subsection{Veronese variety}\label{ssec:veronese}\thispagestyle{plain}

Let $n,d \in\N$ and let $\V_1$ be a complex vector space of dimension $n$ endowed with a Hermitian form $q_1$, in this section we consider $\V$ to be the $\binom{n-1+d}{d}$ dimensional vector subspace of symmetric $d$-tensors $\V\coloneqq\Sym^d\V_1\subseteq\bigotimes_{k=1}^d\V_1$. Similarly to the incipit of Section~\ref{sec:segre}, there exists a unique Hermitian form $q$ on $\V$ such that
\begin{equation*}
    q(\bz^{\otimes d},\bw^{\otimes d})=q_1(\bz,\bw)^d
\end{equation*}
where $\bz,\bw\in\V_1$ which is exactly the restriction on $\V$ of the Bombieri-Weyl form on $\bigotimes_{k=1}^d\V_1$. If we set a basis on $\V_1$, we denote $\St^n_d\subseteq\T_d^n$ the subspace of symmetric tensors that is the space of tensors $\mathcal{U}$ such that for any $\sigma$ permutation of $d$ elements it holds $\mathcal{U}_{j_1j_2\ldots j_d}=\mathcal{U}_{\sigma(j_1)\sigma(j_2)\ldots\sigma(j_d)}$. There exists a bijection
\begin{align*}
    &\St^n_d\to\C[\bz]_d\subseteq\C[\bz]\\
    &\mathcal{U}\mapsto\mathcal{U}\times\bz^{\otimes d}=\sum\limits_{\|\alpha\|_1=d}\binom{d}{\alpha}u_{\alpha}\bz^{\alpha}
\end{align*}
from $\St^n_d$ to the space of homogeneus polynomial of degree $d$ in $n$ variables $\C[\bz]_d$.

Consider also $v_d\V_1\subseteq\V$ the Veronese variety given by the Veronese embedding
\begin{align*}
    v_d\colon&\Pro\V_1\to v_d\V_1\subseteq\Pro\V\\
    &[\bz]\mapsto[\bz^{\otimes d}]
\end{align*}
The Veronese variety coincides with the variety of rank one symmetric $d$ tensors. If we set a basis, we denote $v_d\C^n\subseteq\St_d^{n}$ the Veronese variety and the Veronese embedding reads
\begin{align*}
    &\Pro^{n-1}\to v_d\C^n\subseteq\Pro^{\binom{n-1+d}{d}-1}\\
    &[\bz]\mapsto[z_1^d,\ z_1^{d-1}z_2,\ldots,z_n^{d-1}z_{n-1},\ z_n^d]
\end{align*}
The tensors in the Veronese variety are of the form $\mathcal{U}=(a_1z_1+a_2z_2+\ldots+a_nz_n)^d$ for some $a_1,\ldots,a_n\in\C$ not simultaneously vanishing.

Similarly to what happens for the Segre variety, in the Euclidean distance case the notion of \emph{eigenvector} and relative \emph{eigenvalue} of a tensor $\mathcal{U}\in\T_d^{n}$ is introduced. We can now generalize this notion. For a general $\mathcal{U}\in\T_d^{n}$ using the same reasoning of Subsection~\ref{ssec:compsing} the conditions on a Hermitian singular vector tuple becomes the one contained in the following definition.

\begin{definition}
A \emph{Hermitian eigenvector} of $\mathcal{U}\in\T_d^{n}$ is a unit vector $\bz\in\C^n$ such that it holds the equality
\begin{equation}\label{eigenvec}
\begin{bmatrix}
    \sum_{k=1}^d\mathcal{U}\times\left(\bar{\bz}^{\otimes(k-1)}\otimes e_1\otimes\bar{\bz}^{\otimes{d-k}}\right)\\
    \vdots\\
    \sum_{k=1}^d\mathcal{U}\times\left(\bar{\bz}^{\otimes(k-1)}\otimes e_n\otimes\bar{\bz}^{\otimes{d-k}}\right)\\
\end{bmatrix}=\langle\mathcal{U},\bz^{\otimes d}\rangle_{\C}\bz
\end{equation} 
and the scalar $\langle\mathcal{U},\bz^{\otimes d}\rangle_{\C}$ is real and non negative and it is the associated \emph{Hermitian eigenvalue}. Here $e_j$ for $j=1,\ldots,n$ is the $j$-th vector of the canonical basis of $\C^n$.

In particular, if $\mathcal{U}\in\St_d^{n}$, by symmetry the equation \eqref{eigenvec} simplifies to
\begin{equation}\label{eigenvecsym}
\mathcal{U}\times\bar{\bz}^{\otimes d-1}=\langle\mathcal{U},\bz^{\otimes d}\rangle_{\C}\bz.
\end{equation}
\end{definition}

\begin{remark}
If a Hermitian eigenvector of a tensor $\mathcal{U}\in\T^{\bn}$ is real valued, then that is also a eigenvector of $\mathcal{U}$ and the relative eigenvalue is also a Hermitian eigenvalue.
\end{remark}

\begin{corollary}
Let $\mathcal{U}\in\St_d^{n}$, then $\bz$ is a Hermitian eigenvector of $\mathcal{U}$ iff 
\begin{equation*}
    \langle\mathcal{U},\bz^{\otimes d}\rangle_{\C}\bz^d\in v_d\C^n
\end{equation*}
is a critical point of the Hermitian distance from $v_d\C^{n}$.
\end{corollary}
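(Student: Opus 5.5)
The plan mirrors the Segre computation in Subsection~\ref{ssec:compsing}, restricted to the Veronese variety.

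\textbf{Reduction to a maximization.} Every point of $v_d\C^n$ can be written as $Z=\mu\,\bz^{\otimes d}$ with $\mu\in\C$ and $\|\bz\|_{\C}=1$. Since the Bombieri-Weyl form gives $q(\bz^{\otimes d},\bz^{\otimes d})=q_1(\bz,\bz)^d=1$, we have $\|Z\|_{\C}^2=|\mu|^2$. Expanding as in the Segre case,
\begin{equation*}
\|\mathcal{U}-Z\|_{\C}^2=\|\mathcal{U}\|_{\C}^2-\mu\langle\mathcal{U},\bz^{\otimes d}\rangle_{\C}-\bar\mu\overline{\langle\mathcal{U},\bz^{\otimes d}\rangle_{\C}}+|\mu|^2 .
\end{equation*}
Critical points in the $\mu$ direction force $\mu=\langle\mathcal{U},\bz^{\otimes d}\rangle_{\C}$ (up to the conjugation convention fixed in the Segre computation). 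Replacing $\bz$ by $\bar\xi\bz$ with $\xi\in\Sp^1$ multiplies this scalar by $\xi^d$. Since $Z$ is unchanged by this rotation, we may normalize the scalar to be real and nonnegative.

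\textbf{Lagrange conditions.} Next introduce the real Lagrangian
\begin{equation*}
L(\bz,\lambda)=\langle\mathcal{U},\bz^{\otimes d}\rangle_{\C}+\overline{\langle\mathcal{U},\bz^{\otimes d}\rangle_{\C}}-\lambda(\|\bz\|_{\C}^2-1).
\end{equation*}
Differentiate with respect to $z_j$ and $\bar z_j$. Because $\mathcal{U}\in\St_d^n$, the $d$ summands in \eqref{eigenvec} coincide, which gives $d\,\mathcal{U}\times\bar\bz^{\otimes d-1}=\lambda\bz$; the factor $d$ is absorbed into $\lambda$. Pairing both sides with $\bar\bz$ and using $\|\bz\|=1$ identifies $\lambda$ with $\langle\mathcal{U},\bz^{\otimes d}\rangle_{\C}$. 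This is exactly \eqref{eigenvecsym}, so the critical points of the restricted problem correspond to Hermitian eigenvectors.

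\textbf{Passing to critical points on $v_d\C^n$.} The key observation is that the parametrization $(\mu,\bz)\mapsto\mu\bz^{\otimes d}$ is a submersion onto the smooth locus of $v_d\C^n$ away from $\mu=0$. Critical points of $q_{\mathcal{U}}$ on $v_d\C^n$ therefore correspond to critical points of the composed function, and the previous step supplies the equivalence in both directions.

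\textbf{Main obstacle.} The delicate point is bookkeeping. One must keep the scalar factor $d$ and the conjugations consistent, so that the eigenvalue in \eqref{eigenvecsym} really equals $\langle\mathcal{U},\bz^{\otimes d}\rangle_{\C}$. One must also handle the degenerate case $\mu=0$, the vertex of the cone, separately. Alternatively, the whole argument can be reduced to the Segre corollary: a symmetric tensor $\mathcal{U}$ and a diagonal tuple $(\bz,\ldots,\bz)$ turn \eqref{singvec} into \eqref{eigenvecsym}. Since $v_d\C^n=\Seg\T^n_d\cap\St^n_d$ and the gradient of $q_{\mathcal{U}}$ is symmetric when $\mathcal{U}$ is symmetric, criticality on the Veronese variety matches criticality on the Segre variety at symmetric points.
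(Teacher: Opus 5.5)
Your proposal is correct and follows the paper's route. The paper gives no separate proof: it reads the corollary off the definition of Hermitian eigenvector, which is itself obtained by repeating the Lagrange-multiplier computation of Subsection~\ref{ssec:compsing} for $Z=\mu\,\bz^{\otimes d}$ and collapsing the $d$ symmetric summands of \eqref{eigenvec} into \eqref{eigenvecsym}, exactly as you do. Your submersion argument, your explicit tracking of the factor $d$ (which \eqref{eigenvec} as printed actually drops on the right-hand side), and your caveat about the vertex $\mu=0$ make precise what the paper leaves implicit, and your Segre-based alternative essentially anticipates Proposition~\ref{segrevero}.
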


Considering the definitions of Hermitian singular vector tuple and Hermitian eigenvector through the introduction of the variables $\bw$ and $\bv$ we get the following result.

\begin{proposition}\label{segrevero}
Let $\mathcal{U},\mathcal{V}\in\St_d^{n}$, then the zero locus of the Hermitian critical ideal of $v_d\C^n\subseteq\T_d^n$ of $(\mathcal{U},\mathcal{V})$ is contained in the zero locus of the Hermitian critical ideal of $\Seg\T_d^n\subseteq\T_d^n$ of $(\mathcal{U},\mathcal{V})$. In particular, a Hermitian eigenvector $\bz\in\C^n$ of $\mathcal{U}$ generates a Hermitian singular vector tuple $(\bz,\ldots,\bz)\in\left(\C^n\right)^d$ of $\mathcal{U}$. Moreover, the polynomial $\vHDp_{v_d\C^n,\mathcal{U},\mathcal{V}}(t^2)$ divides the polynomial $\vHDp_{\Seg\T_d^n,\mathcal{U},\mathcal{V}}(t^2)$. In particular, there hold $\vHD(\Seg\T_d^n)\geq\vHD(v_d\C^n)$ and $\max\HD(\Seg\T_d^n)\geq\max\HD(v_d\C^n)$.
\end{proposition}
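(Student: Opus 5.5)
The plan is to work entirely with the parametrized, "doubled" form of the Hermitian critical equations, exactly as in the proofs of Lemma~\ref{matrices} and Lemma~\ref{symmatrices}. A point of the zero locus of the Hermitian critical ideal of $\Seg\T_d^n$ of $(\mathcal{U},\mathcal{V})$ is a pair $(Z,W)=(\lambda_{\bz}\otimes_{k}\bz^{(k)},\lambda_{\bw}\otimes_k\bw^{(k)})$, generic in the sense of avoiding the singular loci. It is characterized by the vanishing of all partial derivatives of $\langle Z-\mathcal{U},W-\mathcal{V}\rangle_{\R}$ with respect to the $\bz^{(k)}$ and $\bw^{(k)}$. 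This produces the decoupled analogue of \eqref{singvec}:
\begin{equation*}
\mathcal{U}\times\bigl(\otimes_{j\neq k}\bw^{(j)}\bigr)=\lambda_{\bz}\prod_{j\neq k}\langle\bz^{(j)},\bw^{(j)}\rangle_{\R}\,\bz^{(k)},\qquad
\mathcal{V}\times\bigl(\otimes_{j\neq k}\bz^{(j)}\bigr)=\lambda_{\bw}\prod_{j\neq k}\langle\bz^{(j)},\bw^{(j)}\rangle_{\R}\,\bw^{(k)}
\end{equation*}
for all $k\in[d]$. In the same way, a point of the Hermitian critical ideal of $v_d\C^n$ is a pair $(\lambda_{\bz}\bz^{\otimes d},\lambda_{\bw}\bw^{\otimes d})$. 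Differentiating with respect to $\bz$ and $\bw$ and using the symmetry of $\mathcal{U},\mathcal{V}$, which turns the sum in \eqref{eigenvec} into $d$ copies of one contraction as in \eqref{eigenvecsym}, gives
\begin{equation*}
\mathcal{U}\times\bw^{\otimes d-1}=\lambda_{\bz}\langle\bz,\bw\rangle_{\R}^{d-1}\bz,\qquad
\mathcal{V}\times\bz^{\otimes d-1}=\lambda_{\bw}\langle\bz,\bw\rangle_{\R}^{d-1}\bw .
\end{equation*}

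The first key step is to observe that substituting $\bz^{(k)}=\bz$ and $\bw^{(k)}=\bw$ for every $k$ into the Segre system gives exactly the Veronese system. Here the symmetry of $\mathcal{U}$ is used to see that contracting on all factors except the $k$-th is independent of $k$. Moreover, $\lambda_{\bz}\bz^{\otimes d}$ is the same tensor whether it is viewed in $v_d\C^n$ or in $\Seg\T_d^n$.

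Second, I must check that the saturation in \eqref{cond} does not discard these points. A generic Veronese critical pair is a smooth point of $v_d\C^n$. It is also smooth on $\Seg\T_d^n$, since nonzero rank-one tensors are smooth points of the affine cone over the Segre variety. Hence the point lies outside $V(I_{X_{\sing}}\cdot I_{X_{\sing}}^{\ast})$ for the Segre variety. The containment of zero loci then holds on the Zariski-open set where both ideals are radical and zero-dimensional, and by closure it extends to the whole of $\V^2$ as stated. Specializing $\mathcal{V}=\bar{\mathcal{U}}$ and $\bw=\bar{\bz}$ yields the statement about Hermitian eigenvectors generating the tuples $(\bz,\ldots,\bz)$. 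This is just \eqref{eigenvecsym} versus \eqref{singvec}, with the same value $\langle\mathcal{U},\bz^{\otimes d}\rangle_{\C}$.

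Third, for the divisibility I recall that, following \cite[Section 5]{f2025}, the vHD polynomial is $\prod(t^2-q_{\mathcal{U},\mathcal{V}}(Z,W))$ over the points of the critical ideal, counted with multiplicity, where $q_{\mathcal{U},\mathcal{V}}(Z,W)=\langle Z-\mathcal{U},W-\mathcal{V}\rangle_{\R}$. The distance function is the same on both varieties because both use the restricted Bombieri--Weyl form. So the Veronese factors form a subset of the Segre factors. For generic $(\mathcal{U},\mathcal{V})$ the points are reduced, so this containment gives divisibility, and the identity then persists as polynomials in the coefficients. The inequality $\vHD(\Seg\T_d^n)\geq\vHD(v_d\C^n)$ follows by comparing degrees. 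For the $\max\HD$ statement, I take a symmetric $\mathcal{U}$ realizing $\max\HD(v_d\C^n)$ on a Euclidean-open set of $\St_d^n$ and count the injected Segre critical points.

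The main obstacle is this last step. The $\HD$ count for $\Seg\T_d^n$ concerns Euclidean-open subsets of all of $\T_d^n$, whereas symmetric tensors form a proper subspace. I would handle it by perturbation: the $\max\HD(v_d\C^n)$ critical points are nondegenerate for generic symmetric $\mathcal{U}$, so by the implicit function theorem they persist as Segre critical points for nearby nonsymmetric tensors. This gives an open set in $\T_d^n$ on which the Segre count is at least that number. I still have to make sure the count is constant on some open subset of it, for which I would pass to an open subset where the generalized HD discriminant does not vanish.
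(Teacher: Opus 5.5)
Your containment step (setting $\bz^{(k)}=\bz$, $\bw^{(k)}=\bw$, using the symmetry of $\mathcal{U},\mathcal{V}$ to collapse the $d$ Segre equations to the Veronese ones, and noting that nonzero rank-one tensors are smooth on both cones) is exactly the paper's reasoning, which it gives only as ``considering the definitions''. The same goes for the eigenvector statement. That step actually gives the containment pointwise for every symmetric pair, because the zero locus of each saturated ideal is the Zariski closure of its smooth critical pairs. So no genericity or limit in the data is needed, and only $(\St_d^n)^2$, not all of $\V^2$, is concerned.

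The genuine gap is in your last step, which the paper does not argue at all. The implicit function theorem needs $P=\lambda\bz^{\otimes d}$ to be a nondegenerate critical point of the distance function on $\Seg\T_d^n$. Nondegeneracy on $v_d\C^n$ does not give this. For symmetric $\mathcal{U}$, the function $Z\mapsto\|\mathcal{U}-Z\|_{\C}^2$ on $\Seg\T_d^n$ is invariant under the $S_d$-action permuting tensor factors, whose fixed locus is the Veronese cone. Hence the Hessian at $P$ is block diagonal. One block lives on $T_P v_d\C^n$, the $S_d$-invariant directions. The other lives on the complementary non-symmetric directions, spanned by the vectors $\bz^{\otimes(k-1)}\otimes\mathbf{x}\otimes\bz^{\otimes(d-k)}$ modulo their symmetrizations, of complex dimension $(n-1)(d-1)$. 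The Veronese problem controls only the first block. If the second block is singular, $P$ may have local index $0$ and disappear under a nonsymmetric perturbation. You then get no open subset of $\T_d^n$ carrying $\max\HD(v_d\C^n)$ critical points.

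What is missing is a proof of the following. For $\mathcal{U}$ in a dense subset of the open set $O\subseteq\St_d^n$ realizing $\max\HD(v_d\C^n)$, the Hermitian critical ideal of $\Seg\T_d^n$ of $(\mathcal{U},\bar{\mathcal{U}})$ must be reduced at the Veronese points. Reducedness of the doubled system at $(\bz,\bar{\bz})$ is equivalent to nondegeneracy of the full real Hessian. This is a Zariski-open condition on $(\mathcal{U},\mathcal{V})\in(\St_d^n)^2$, and $\{(\mathcal{U},\bar{\mathcal{U}})\mid\mathcal{U}\in O\}$ is Zariski dense there. So it would suffice to verify it at a single symmetric pair. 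Alternatively, a nonzero local index at each point would also give persistence. Once this is in place, your final passage to an open set off the HD discriminant works.

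The same issue affects your claim that ``the points are reduced'' in the divisibility step: $(\St_d^n)^2$ is a proper subspace, so generic symmetric data need not be generic for the Segre ideal. There it is repairable. You only need each generically simple root of $\vHDp_{v_d\C^n,\mathcal{U},\mathcal{V}}(t^2)$ to be a root of the specialized Segre polynomial. That holds because the Veronese ideal depends only on the symmetrizations of the data, and the Veronese pairs lie on the Segre critical correspondence.
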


%%------------------------------------------------------------------------------------------------------

\subsection{Binary forms}

In this subsection we consider the case of binary forms, or equivalently the Veronese variety of Subsection~\ref{ssec:veronese} setting $n=2$.  In \cite{of2014} it is shown $\ED(v_d\C^2)=d$, more precisely in \cite[Theorem 1]{mm2017} the author shows that the number of real critical points of the Euclidean distance of a symmetric tensor is bounded from below by the number of real zeros of the polynomial representing it. From this we get that $\max\HD(v_d\C^2)\geq d$.

Let us recall that we identify a tensor $\mathcal{U}\in\St_d^2$ with the homogeneous polynomial 
\begin{equation*}
    \mathcal{U}=\sum_{k=0}^d\binom{d}{k}u_{k}z_1^{d-k}z_2^k\in\C[z_1,z_2].
\end{equation*}
In this context condition \eqref{eigenvecsym} can be expressed in the nice form of the bivariate generalized polynomial equation
\begin{align*}
    0&=\det\begin{bmatrix}
        \partial_{z_1}\overline{\mathcal{U}} & \bar{z}_1\\
        \partial_{z_2}\overline{\mathcal{U}} & \bar{z}_2
    \end{bmatrix}=\begin{bmatrix}
        \sum_{k=0}^{d-1}(d-k)\binom{d}{k}\bar{u}_{k}z_1^{d-k-1}z_2^k & \bar{z}_1\\
        \sum_{k=1}^{d}k\binom{d}{k}\bar{u}_{k}z_1^{d-k}z_2^{k-1} & \bar{z}_2
    \end{bmatrix}\\
    &=\left(\sum_{k=0}^{d-1}(d-k)\binom{d}{k}\bar{u}_{k}z_1^{d-k-1}z_2^k\right)\bar{z}_2-\left(\sum_{k=1}^{d}k\binom{d}{k}\bar{u}_{k}z_1^{d-k}z_2^{k-1}\right)\bar{z}_1.
\end{align*}
Since we consider an Hermitian eigenvector to be normalized, without loss of generality we divide this equation by $z_1^{d-1}\bar{z}_1$ and setting $z=\frac{z_2}{z_1}$ we obtain the univariate generalized polynomial equation
\begin{equation}\label{geneigen}
    \left(\sum_{k=0}^{d-1}(d-k)\binom{d}{k}\bar{u}_{k}z^k\right)\bar{z}-\sum_{k=1}^{d}k\binom{d}{k}\bar{u}_{k}z^{k-1}=0.
\end{equation}
The same reasoning could be applied to a tensor $\mathcal{U}\in\T_d^2$ and condition \eqref{eigenvec} in order to get a similar polynomial.

\begin{corollary}\label{corovd}
	Let $d>2$ and $\mathcal{U}\in\T_d^{2}$ be a generic tensor, it has at least $d-2$ and at most $5d-10$ Hermitian eigenvalues. In particular, $d-2\leq\min\HD(v_d\C^2)\leq\max\HD(v_d\C^2)\leq 5d-10$.
\end{corollary}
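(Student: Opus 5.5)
The plan is to reduce the count of Hermitian eigenvalues to the count of zeros of a univariate harmonic rational function, and then invoke the Khavinson--Neumann theory of zeros of $\bar z-r(z)$ (the kind of generalized polynomial results collected in \cite{f2024}). First I would symmetrize. For $\mathcal{U}\in\T_d^2$ the left-hand side of \eqref{eigenvec} depends only on the symmetrization of $\mathcal{U}$, so condition \eqref{eigenvec} becomes the $2\times 2$ determinant condition written before \eqref{geneigen}. This holds with $\mathcal{U}$ replaced by its symmetric part, whose coefficients $u_0,\ldots,u_d$ are generic when $\mathcal{U}$ is generic, so it suffices to treat \eqref{geneigen}.

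Next I would set up the correspondence between roots and eigenvalues. A Hermitian eigenvector $\bz$ with $z_1\neq 0$ corresponds to the affine root $z=z_2/z_1$ of \eqref{geneigen}. For generic $\mathcal{U}$ the point $z_1=0$ is not a solution, since that would require $\bar u_d=0$ after the dehomogenization. Conversely, each root $z$ determines a unit vector $\bz$ up to a phase $\xi\in\Sp^1$. The corresponding value $\langle\mathcal{U},\bz^{\otimes d}\rangle_{\C}$ is multiplied by $\bar\xi^{\,d}$ under this change of phase, so it can be made real and nonnegative. It is nonzero for generic $\mathcal{U}$, because vanishing would place $\mathcal{U}$ on a proper Zariski-closed (hyperdeterminant-type) set. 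The resulting critical point $\langle\mathcal{U},\bz^{\otimes d}\rangle_{\C}\bz^{\otimes d}$ is independent of the phase chosen, so distinct roots $z$ give distinct critical points, and the number of Hermitian eigenvalues equals the number of roots of \eqref{geneigen}.

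Then I would rewrite \eqref{geneigen} as $\bar z=r(z)$ with
\begin{equation*}
r(z)=\frac{B(z)}{A(z)},\qquad A(z)=\sum_{k=0}^{d-1}(d-k)\binom{d}{k}\bar u_kz^k,\quad B(z)=\sum_{k=1}^{d}k\binom{d}{k}\bar u_kz^{k-1}.
\end{equation*}
Both $A$ and $B$ have degree $d-1$. For generic $u$ they are coprime and no root of \eqref{geneigen} lies at a zero of $A$ (otherwise $B$ would vanish there too). Hence $r$ is a rational function of exact degree $n=d-1\geq 2$, which uses $d>2$, and $r(\infty)$ is finite.

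For the upper bound I would apply the Khavinson--Neumann theorem: for a rational $r$ of degree $n\geq 2$, the equation $\bar z=r(z)$ has at most $5n-5=5d-10$ solutions. For the lower bound I would use the argument principle for the harmonic map $h(z)=\bar z-r(z)$, summing the Poincar\'e indices of the zeros (generically all zeros are nondegenerate, each with index $\pm1$). This gives $N_+-N_-=\pm(n-1)$ once the contributions of the $n$ poles of $r$ and of the large circle are accounted for. The large-circle contribution is winding $-1$, since $\bar z$ dominates and $r$ is bounded at infinity. Therefore the number of zeros is at least $n-1=d-2$.

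The main obstacle is this index bookkeeping: one must check carefully that the orientation signs come out so that $|N_+-N_-|=n-1$, and not $n+1$, when $r(\infty)$ is finite. One must also verify that every genericity condition used (no root at infinity, no root on the poles, nondegenerate zeros, nonzero eigenvalue) is Euclidean-open and dense. Finally, since these bounds hold on an open dense set of $\bu$, they transfer to the values attained by $\HD(v_d\C^2)$, which gives $d-2\leq\min\HD(v_d\C^2)\leq\max\HD(v_d\C^2)\leq 5d-10$.
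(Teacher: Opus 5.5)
Your proposal is correct. The upper bound is obtained exactly as in the paper, by applying Khavinson--Neumann to $\bar z=r(z)$ with $\deg r=d-1\geq 2$. The lower bound follows a slightly different bookkeeping.

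The paper does not pass to the rational function. It keeps the generalized polynomial $p(z,\bar z)=A(z)\bar z-B(z)$ of \eqref{geneigen} and observes that, when $u_{d-1}\neq 0$, its unique top-degree monomial is $d\bar u_{d-1}z^{d-1}\bar z$. Proposition~\ref{poldeg} then gives $\deg^{\prime}p=(d-1)-1=d-2$ on $\Sp^2$, and Proposition~\ref{lbdeg} turns this into at least $d-2$ zeros.

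Your argument principle for $h=\bar z-r$ is the same degree count after dividing by $A$. Since $p=A\,h$, the winding $d-2$ of $p$ on a large circle splits as $d-1$ from $A$ plus $-1$ from $h$. Your bookkeeping $-1=N_+-N_--n$ counts the poles with total order $n$, each contributing negatively, and nondegenerate zeros contributing $+1$ or $-1$ according to whether $|r'|>1$ or $|r'|<1$. It therefore gives $N_+-N_-=n-1$, so the worry you flag resolves in your favour.

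What each route buys:
\begin{itemize}
\item The paper's route avoids tracking poles for the lower bound.
\item Yours puts both bounds inside the single Khavinson--Neumann framework; the identity $N_+-N_-=n-1$ is in fact the first step of their $5n-5$ proof.
\item You also supply details the paper leaves implicit:
\begin{itemize}
\item the reduction from $\T_d^2$ to the symmetric part, via the observation that \eqref{eigenvec} only sees the symmetrization (the paper only says ``in a similar way'');
\item coprimality of $A$ and $B$, which is needed for $\deg r=d-1$;
\item the phase normalization;
\item non-vanishing of the eigenvalue, so that roots and critical points correspond bijectively.
\end{itemize}
\end{itemize}

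One small slip: $z_1=0$ solves the homogeneous equation iff $\partial_{z_1}\overline{\mathcal U}(0,1)=d\bar u_{d-1}$ vanishes, i.e.\ iff $u_{d-1}=0$, not $u_d=0$. This is exactly the condition that $r(\infty)$ is finite, which you use later, and it matches the paper's hypothesis $u_{d-1}\neq 0$. Nothing downstream is affected.
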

\begin{proof}
	Consider a binary form $\mathcal{U}$ such that $u_{d-1}\neq 0$. Then the generalized polynomial of equation \eqref{geneigen} is of the form of Proposition~\ref{poldeg} where the term with the highest degree is $d\bar{u}_{d-1}z^{d-1}\bar{z}$ and there must exist at least $(d-1)-1=d-2$ solutions. On the other hand, the upper bound $5d-10$ follows from \cite[Theorem 1]{kn2005}.  In a similar way the result can be derived for a generic tensor $\mathcal{U}\in\T_d^2$ considering the coefficient obtained using the same steps from condition \eqref{eigenvec}.
\end{proof}

\begin{proposition}\label{bin}
Let $\mathcal{U}\in\St_d^{2}$ be a generic binary form, then there exist at most $d^2-2d+2$ Hermitian eigenvalues of $\mathcal{U}$. More specifically, $\vHD(v_d\C^2)=d^2-2d+2$.
\end{proposition}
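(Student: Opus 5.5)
The plan is to pass from the generalized polynomial equation \eqref{geneigen} to a genuine polynomial system by replacing $\bar z$ with an independent variable, in the spirit of the Hermitian critical ideal \eqref{cond}. Then I count the solutions of that system with Bézout's theorem. For $\mathcal{U}=\sum_k\binom{d}{k}u_kz_1^{d-k}z_2^k$ and an independent $\mathcal{V}=\sum_k\binom{d}{k}v_kz_1^{d-k}z_2^k$ playing the role of $\bar{\mathcal{U}}$, I work in the affine chart $z=z_2/z_1$ with a new variable $w$ playing the role of $\bar z$. The equation \eqref{geneigen} and its $\ast$-conjugate become the system
\begin{equation*}
F(z,w)\coloneqq A_{\mathcal{V}}(z)\,w-B_{\mathcal{V}}(z)=0,\qquad G(z,w)\coloneqq A_{\mathcal{U}}(w)\,z-B_{\mathcal{U}}(w)=0,
\end{equation*}
where $A_{\mathcal{V}}(z)=\sum_{k=0}^{d-1}(d-k)\binom{d}{k}v_kz^k$ and $B_{\mathcal{V}}(z)=\sum_{k=1}^{d}k\binom{d}{k}v_kz^{k-1}$, both of degree $d-1$. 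The points $(z,w)$ with $w=\bar z$ correspond exactly to Hermitian eigenvectors, up to the phase ambiguity already discussed. Points on the chart $z_1=0$ are nongeneric, so for generic $(\mathcal{U},\mathcal{V})$ the number of solutions of the Hermitian critical ideal of $v_d\C^2$ equals the number of affine solutions of $F=G=0$, counted away from the saturated singular locus. This number is $\vHD(v_d\C^2)$, and it bounds the number of Hermitian eigenvalues of a generic $\mathcal{U}$ from above. This bound is the first claim.

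The second step is the count itself. Both $F$ and $G$ have total degree $d$, and their only top-degree terms are $d v_{d-1}z^{d-1}w$ and $d u_{d-1}w^{d-1}z$. Since $d\ge 2$, these top parts have no common zero at infinity except possibly $[1:0:0]$ and $[0:1:0]$. Bézout gives $d^2$ intersection points in $\Pro^2$, so I must subtract the intersection multiplicities at these two points at infinity. At $[0:1:0]$ I use $w$ as the dehomogenizing variable, with local coordinates $s=z/w$ and $t=1/w$. There $F$ becomes $t^{d}F=A_{\mathcal{V}}$-type terms, with local expansion $\approx dv_{d-1}s^{d-1}+\ldots$ plus terms in $t$, and $G$ becomes $du_{d-1}s+(\text{terms with } t)$. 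I expect $G$ to be smooth at that point and transverse to $\{t=0\}$. Restricting $F$ to the branch of $G$ then gives multiplicity $d-1$. By symmetry the point $[1:0:0]$ contributes $d-1$ as well, for a total of $2d-2$.

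The result is $d^2-(2d-2)=d^2-2d+2$ affine solutions. For generic coefficients these solutions are simple and avoid the excised loci (for instance $A_{\mathcal{V}}=B_{\mathcal{V}}=0$, or the point corresponding to $z_1=0$). To confirm this, I exhibit one explicit pair $(\mathcal{U},\mathcal{V})$, for example with only $u_0,u_{d-1},u_d$ nonzero, for which the count is exact and the solutions are reduced. Then \cite[Lemma 3.1]{f2025} propagates this count to a Zariski-open set.

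The main obstacle is the local multiplicity computation at the points at infinity. I must make sure the lower-order terms in $t$ do not change the multiplicity, which means checking that the coefficient of $t$ in $G$ does not vanish identically. If that direct expansion gets messy, my fallback is to instead compute the number of affine solutions as the mixed volume of the Newton polygons of $F$ and $G$ via Bernstein–Kushnirenko, since generic coefficients are available. The Newton polygon of $F$ is $\conv\{(0,0),(d-1,0),(0,1),(d-1,1)\}$ and the one of $G$ is its transpose. Their mixed volume is $\Vol(P+Q)-\Vol(P)-\Vol(Q)$, which I expect to evaluate to $d^2-2d+2$, with a separate check that no solutions lie on the coordinate axes generically.
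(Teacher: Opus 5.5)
Your argument is correct, but your main route differs from the paper's. The paper does not compactify. It applies Bernstein--Kushnirenko directly to $p(z,w)=\bar p(w,z)=0$, with Newton polygons $\triangle_1=[0,d-1]\times[0,1]$ and its transpose $\triangle_2$, and reads off $\MV_2(\triangle_1,\triangle_2)=\Vol_2(\triangle_1+\triangle_2)-\Vol_2(\triangle_1)-\Vol_2(\triangle_2)=d^2-2(d-1)$. That is exactly your fallback.

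Your Bézout count in $\Pro^2$, with excess intersection $d-1$ at each of $[1:0:0]$ and $[0:1:0]$, reaches the same number and gains two things.
\begin{itemize}
\item It counts all affine solutions, including those on the coordinate axes that a torus count misses. These do occur for special data: the origin solves $F=G=0$ as soon as $u_1=v_1=0$. For $d\geq 3$ this is the case in your sparse test example if $\mathcal V$ has the same shape.
\item It makes the genericity hypothesis explicit. The paper's route needs that hypothesis too. The coefficients of $F$ are not independent: each $v_k$ with $0<k<d$ enters the coefficients of both $z^kw$ and $z^{k-1}$. So equality in BKK is not automatic, and one must check that the facial systems have no toric solutions. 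The paper leaves this under ``by the assumption of genericity''.
\end{itemize}
In exchange, the BKK route is a one-line combinatorial computation.

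One correction to your local analysis concerns the point $[0:1:0]$, with $s=z/w$ and $t=1/w$.
\begin{itemize}
\item The whole form $t^{d-1}A_{\mathcal V}(s/t)=\sum_{k=0}^{d-1}(d-k)\binom{d}{k}v_ks^kt^{d-1-k}$ is of lowest order, not just $dv_{d-1}s^{d-1}$. So $F$ has a point of multiplicity $d-1$, with tangent lines $s=\rho t$ where $A_{\mathcal V}(\rho)=0$.
\item $G$ is smooth there, with tangent line $u_{d-1}s=u_dt$.
\item Hence the intersection multiplicity is exactly $d-1$ if and only if $u_{d-1}\neq 0$ and $A_{\mathcal V}(u_d/u_{d-1})\neq 0$. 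Symmetrically, at $[1:0:0]$ you need $v_{d-1}\neq 0$ and $A_{\mathcal U}(v_d/v_{d-1})\neq 0$.
\item Whether the $t$-coefficient $-du_d$ of $G$ vanishes, which is the check you named as your main obstacle, is irrelevant.
\end{itemize}
These two conditions are precisely the BKK facial conditions for the edges of maximal $w$-degree and maximal $z$-degree. Geometrically, they say that no critical pair has $\bw$, respectively $\bz$, proportional to $(0,1)$. Your explicit example still has to be checked against them and for reducedness; the paper does not address reducedness at all.
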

\begin{proof}
Let $p$ be the generalized polynomial of equation \eqref{geneigen}. Introducing the variable $w$ and a binary form $\mathcal{V}$, using the Bernstein–Kushnirenko theorem the number of solutions of the system $p(z,w)=\bar{p}(w,z)=0$ is bounded by the mixed volume $\MV_{2}(\triangle_1,\triangle_2)$ where the arguments are the convex hulls 
\begin{align*}
    \triangle_1&=\conv(\lbrace(k,j)\in\N^2\mid k\leq d-1,\ j\leq 1\rbrace),\\
    \triangle_2&=\conv(\lbrace(k,j)\in\N^2\mid k\leq 1,\ j\leq d-1\rbrace).
\end{align*}
Now, the use of the explicit formula of the mixed volume yields
\begin{align*}
    \MV_{2}(\triangle_1,\triangle_2)&=-\Vol_2(\triangle_1)-\Vol_2(\triangle_2)+\Vol_2(\triangle_1+\triangle_2)\\
    &-(d-1)-(d-1)+d^2=d^2-2d+2
\end{align*}
and hence, by the assumption of genericity, the assertion follows.
\end{proof}

\begin{proposition}\label{extenssym}
For the variety $v_3\C^2\subseteq\T_3^2$ there hold 
\begin{equation*}
    \vHD(v_3\C^2)=5\qquad\text{and}\qquad\HD(v_3\C^2)=\lbrace 1,3,5\rbrace.
\end{equation*}
\end{proposition}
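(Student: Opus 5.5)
The virtual count is immediate from Proposition~\ref{bin} with $d=3$, which gives $\vHD(v_3\C^2)=d^2-2d+2=5$. The work is in determining $\HD(v_3\C^2)$. I would reduce to the univariate generalized polynomial equation \eqref{geneigen}, which for $d=3$ reads
\begin{equation*}
p(z)\coloneqq\left(3\bar{u}_0+6\bar{u}_1z+3\bar{u}_2z^2\right)\bar{z}-\left(3\bar{u}_1+6\bar{u}_2z+3\bar{u}_3z^2\right)=0,
\end{equation*}
together with the possible eigenvector at $z_1=0$ (the point $z=\infty$). For generic $\mathcal{U}$ ($u_3\neq 0$) that point is not a solution. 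The number of critical points then equals the number of distinct solutions $z\in\C$ of $p=0$, since normalization and the choice of phase identify each solution with a single critical point.

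For the upper bound, the number of solutions is at most $\vHD=5$: real solutions $(z,\bar z)$ are among the complex solutions of the system $p(z,w)=\bar p(w,z)=0$ counted in Proposition~\ref{bin}. For the lower bound, Corollary~\ref{corovd} gives at least $d-2=1$ solution.

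The main step, and the one I expect to be hardest, is a parity argument excluding the even values $2$ and $4$ for generic $\mathcal{U}$. I would view $p$ as a real map $\R^2\to\R^2$ of the form $f(z)\bar z-g(z)$ and compute its topological degree on a large circle. Since the term $3\bar u_2 z^2\bar z$ dominates and has winding number $2-1=1$, and generic zeros are nondegenerate with local index $\pm1$, the number of zeros is congruent to $1$ modulo $2$.

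A subtlety is whether a zero can escape to infinity. This happens exactly when $u_2=0$, where the leading term becomes $z\bar z$ with winding number $0$. That locus has positive codimension, so it is harmless generically. Combining the upper bound, the lower bound and the parity gives $\HD\subseteq\{1,3,5\}$.

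Finally I would show that each of $1,3,5$ occurs on a Euclidean-open set. Since every generic zero is nondegenerate, counts are locally constant, so it suffices to exhibit one explicit binary cubic for each value and check nondegeneracy.
\begin{itemize}
\item For $5$, a real cubic with three real roots gives $3$ real eigenvectors by \cite[Theorem 1]{mm2017}. A perturbation such as $z_1^3-3z_1z_2^2$, the $\mathbb{Z}_3$-symmetric example, should produce two more through its symmetry, giving $3+2=5$.
\item For $1$, a form close to $z_2^3$ with a small perturbation should have a unique eigenvector.
\item For $3$, I would take a real cubic with one real root, or simply exhibit a numerical example.
\end{itemize}
I would verify each example by solving $p=0$ numerically and checking that the Jacobian of the real map is nonzero at every solution.
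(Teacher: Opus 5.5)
Your containment argument is correct, and it goes beyond the paper. The paper's proof computes $\vHD=5$ on $\hat X_3$, lists Macaulay2 examples for $5$, $1$ and $3$, and never rules out $2$ and $4$. Your degree count rules them out: when $u_2\neq0$ the unique top term $3\bar u_2z^2\bar z$ has degree $2-1=1$, and nondegenerate zeros have local degree $\pm1$, so the number of zeros is odd. Your example for $5$ also works. For $z_1^3-3z_1z_2^2$ one gets $p=3(\bar z-z^2\bar z+2z)$, whose zeros $0,\pm\sqrt3,\pm i$ are all nondegenerate, and $u_2=-1\neq0$. One minor correction: $[0:1]$ is a solution iff $u_2=0$, not $u_3=0$.

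The gap is the value $1$, and it cannot be closed, so $1\notin\HD(v_3\C^2)$.

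\begin{itemize}
\item \textbf{Your candidate fails.} For $z_2^3$ one gets $p_0=-3z^2$, which has local degree $2$ at $z=0$ (the triple root $[1:0]$, where $\lambda=0$). So every small generic perturbation has zeros of total local degree $2$ near $z=0$, i.e.\ at least two critical points near the vertex of the cone. There is also the zero near $z=\infty$ giving the nearest point, so at least three in all.

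\item \textbf{No candidate can work.} The Hermitian eigenvectors are exactly the critical points with $F>0$ of $F([\bz])=|\langle\mathcal U,\bz^{\otimes3}\rangle_{\C}|^2/\|\bz\|^6$ on $\Pro^1\cong\Sp^2$. Indeed, with $\langle\mathcal U,\bz^{\otimes3}\rangle_{\C}=\mathcal U(\bar\bz)$, the Lagrange condition $\overline{\mathcal U(\bar\bz)}\,\nabla\mathcal U(\bar\bz)\in\R\bz$ is the Hermitian eigenvector equation up to phase. For generic $\mathcal U$ the three zeros of $F$ are isolated minima, each of index $+1$. The global maximum is an eigenvector, also of index $+1$. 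By Poincar\'e--Hopf the indices sum to $\chi(\Sp^2)=2$, so the remaining eigenvectors have total index $-2$. Hence they exist, and with your parity count there are always at least three critical points. (The same count gives at least $d$ for generic binary forms of degree $d$, improving the $d-2$ you quote.)

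\item \textbf{What your argument actually proves.} Completed with a $3$-example such as $z_1^3+z_1z_2^2$ (zeros $0,\pm i\sqrt5$, all nondegenerate, $u_2=\frac13$), it proves $\HD(v_3\C^2)=\lbrace 3,5\rbrace$.

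\item \textbf{The paper's example does not hold up.} The paper gets $1$ only from the Macaulay2 example $2z_1^3+3\cdot2z_1^2z_2+3\cdot10z_1z_2^2+2z_2^3$. This form is real and squarefree, and its real eigenvector equation $5z^3+z^2-9z-1=0$ has three real roots, so it already has at least three critical points.
\end{itemize}
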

\begin{proof}
The variety $v_3\C^2$ is defined by the sum of the ideal defining the variety $\Seg\T_3^2\subseteq\T_3^2$ (see Example~\ref{extens}) and the ideal
\begin{equation*}
    \langle z_{100}-z_{010},\ z_{100}-z_{001},\ z_{110}-z_{011},\ z_{110}-z_{101}\rangle.
\end{equation*}
By restricting ourselves to the subspace of symmetric tensors, we can simplify our problem and consider the variety $\hat{X}_3\subseteq\C^4\simeq\St_3^2$ we will see in Example~\ref{extwist} and the Hermitian form $q(\bz,\bw)=z_1\bar{w}_1+3z_2\bar{w}_2+3z_3\bar{w}_3+z_4\bar{w}_4$. Then, it is easy to see $\vHD(X)=5$ as predicted by Proposition~\ref{bin}. We used Macaulay2 to compute the following results. The tensor 
\begin{equation*}
    \mathcal{U}=z_1^3+3\cdot 2z_1^2z_2+3\cdot 3z_1z_2^2+5z_2^3\in\Sy_3^2
\end{equation*}
possesses $3$ real (Hermitian) eigenvectors with relative (Hermitian) eigenvalues given by
\begin{alignat*}{2}
    &\bz^{(1)}=\begin{bmatrix}
        -\frac{3}{\sqrt{13}}\\
        \frac{2}{\sqrt{13}}
    \end{bmatrix}&&\langle\mathcal{U},(\bz^{(1)})^{\otimes 3}\rangle_{\C}=\frac{1}{\sqrt{13}},\\
    &\bz^{(2)}=\begin{bmatrix}
        \frac{1}{\sqrt{\varphi^2+1}}\\
        \frac{\varphi}{\sqrt{\varphi^2+1}}
    \end{bmatrix}&&\langle\mathcal{U},(\bz^{(2)})^{\otimes 3}\rangle_{\C}=\frac{3\varphi^2+4\varphi+1}{\sqrt{\varphi^2+1}}=\frac{15+7\sqrt{5}}{\sqrt{10+2\sqrt{5}}},\\
    &\bz^{(3)}=\begin{bmatrix}
        -\frac{1}{\sqrt{\varphi^2-2\varphi+2}}\\
        \frac{\varphi-1}{\sqrt{\varphi^2-2\varphi+2}}
    \end{bmatrix}\qquad&&\langle\mathcal{U},(\bz^{(3)})^{\otimes 3}\rangle_{\C}=\frac{-3\varphi^2+10\varphi-8}{\sqrt{\varphi^2-2\varphi+2}}=\frac{-15+7\sqrt{5}}{\sqrt{10-2\sqrt{5}}},\\
\end{alignat*}
where $\varphi=\frac{1+\sqrt{5}}{2}$ is the golden ratio. These eigenvalues yield the critical points
\begin{equation*}
    \frac{1}{\sqrt{13}}(\bz^{(1)})^{\otimes 3},\qquad \frac{15+7\sqrt{5}}{\sqrt{10+2\sqrt{5}}}(\bz^{(2)})^{\otimes 3}\qquad\text{and}\qquad\frac{-15+7\sqrt{5}}{\sqrt{10-2\sqrt{5}}}(\bz^{(3)})^{\otimes 3}
\end{equation*}
respectively. Moreover, for $\mathcal{U}$ we find $2$ Hermitian eigenvectors with relative Hermitian eigenvalues that are approximated by
\begin{alignat*}{2}
   &\bz^{(4)}\approx\begin{bmatrix}
        \frac{1}{0.690474-1.01910i}\\
        \frac{-0.59375-0.0403436i}{0.690474-1.01910i}
    \end{bmatrix}&&\langle\mathcal{U},(\bz^{(4)})^{\otimes 3}\rangle_{\C}\approx 0.670012,\\
    &\bz^{(5)}=\bar{\bz}^{(4)}\approx\begin{bmatrix}
        \frac{1}{0.690474+1.01910i}\\
        \frac{-0.59375+0.0403436i}{0.690474+1.01910i}
    \end{bmatrix}\qquad&&\langle\mathcal{U},(\bz^{(5)})^{\otimes 3}\rangle_{\C}\approx 0.670012,
\end{alignat*}
which yields the approximate relative critical points
\begin{equation*}
    0.670012(\bz^{(4)})^{\otimes 3}\qquad\text{and}\qquad 0.670012(\bar{\bz}^{(4)})^{\otimes 3}.
\end{equation*}
Other tensors with $5$ critical points are \begin{equation*}
    25z_1^3+3\cdot 200z_1^2z_2+3\cdot 10z_1z_2^2+36z_2^3\quad\text{and}\quad 35z_1^3+3\cdot 12+z_1^2z_2+3\cdot 160z_1z_2^2+20z_2^3.
\end{equation*}
On the other hand is easy to find tensors with $1$ and $3$ critical points, for example take 
\begin{equation*}
    2z_1^3+3\cdot 2z_1^2z_2+3\cdot 10z_1z_2^2+2z_2^3\quad\text{and}\quad 50z_1^3+3\cdot 105z_1^2z_2+3\cdot 42z_1z_2^2+63z_2^3
\end{equation*}
respectively.
\end{proof}

The variety $v_d\C^2$ can be isometrically identified to the rational normal curve of degree $d$ we present below. Now, we treat this variety in presence of the canonical Hermitian inner product. Let $X_d$ be the moment curve of degree $d$ given by the parametrization
\begin{align*}
    \psi_d\colon&\C\to X_d\subseteq\C^d\\
    &z\mapsto(z,z^2,\ldots,z^d)
\end{align*}
and $\hat{X}_d\simeq v_d\C^2$ be the rational normal curve of degree $d$ given by the embedding
\begin{align*}
    v_d\colon&\Pro^{1}\to\hat{X}_d\subseteq\Pro^{d}\\
    &[\bz]\mapsto[z_1^d,z_1^{d-1}z_2,\ldots,z_2^d]
\end{align*}
The latter is the projective closure of the former. In \cite[Example 5.12]{dhost2014} it is proved that for generic symmetric bilinear forms it holds $\ED(\hat{X}_d)=3d-2$, while in the case of the inner product induced by the Bombier-Weyl form we have $\ED(\hat{X}_d)=d$ as stated in the beginning of this subsection. In particular, since $X_d$ is a parametrized variety of degree $d$ it holds the equality
\begin{equation*}
    \ED(X_d)^2=(2d-1)^2=d\ED(\hat{X}_d)+(d-1)^2.
\end{equation*}
We can also see the equality of the numbers at the extremes by noting that $\hat{X}_d=v_d([1,z_2])\cup v_d([z_1,1])$ and the intersection of this two subsets consist only in the point $[1,1]$. Thus, since the cones over $v_d([1,z_2])$ and $v_d([z_1,1])$ are both equal to $X_d$, for generic points we obtain $\ED(X_d)$ solutions for the critical ideal of the $\ED$ from points $[1,z_2]$ and $\ED(X_d)$ solutions for the critical ideal of the $\ED$ from points $[z_1,1]$ which in sum yields $\ED(X_d)^2$ solutions $(z_1,z_2)$ in the cone over $\hat{X}_d$. In the end, since the origin is a solution of degree $(d-1)^2$ of the critical ideal and the parametrization $v_d$ is $d$-to-one the result follows.

From \cite[Proposition 2.18]{f2025} follows that for generic Hermitian forms it holds the equality $\vHD(X_d)=2d^2-2d+1$. Moreover, a similar argument as above applies in the Hermitian case and we obtain the following result.

\begin{proposition}\label{rat}
Let $X_d\subseteq\C^d$ be the moment curve of degree $d$ and $\hat{X}_d\subseteq\Pro^d$ be the rational normal curve of degree $d$, then
\begin{equation*}
    \vHD(X_d)^2=d^2\vHD(\hat{X}_d)+(d-1)^4
\end{equation*}
and in particular $\vHD(\hat{X}_d)=3d^2-4d+2$.
\end{proposition}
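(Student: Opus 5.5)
Following the Euclidean argument given just above for $\ED(X_d)^2=d\,\ED(\hat{X}_d)+(d-1)^2$, I will do the same count in the Hermitian setting, now in the doubled variables $(\bz,\bw)$. The cone over $\hat{X}_d$ is covered by the two affine charts $v_d([1,z_2])$ and $v_d([z_1,1])$. Each chart is isometric (up to the weighting) to the moment curve $X_d$, and the two charts meet only at $[1,1]$.

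The key difference from the Euclidean case is that the Hermitian critical ideal lives in $\C[\bz,\bw]$: it involves a point $\bz$ of the cone and an independent point $\bw$ of the conjugate cone. I will therefore count pairs of chart choices. For generic $(\bu,\bv)$, the critical system restricted to each chart yields $\vHD(X_d)$ solutions for the $\bz$-part and, independently, $\vHD(X_d)$ for the $\bw$-part. Concretely, I parametrize the cone by $(z_1,z_2)\mapsto v_d(z_1,z_2)$, and likewise $(w_1,w_2)$ for the conjugate copy. I then write the Hermitian critical equations as a bihomogeneous system in $(z_1,z_2,w_1,w_2)$, exactly as in the proof of Proposition~\ref{bin}. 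The total number of solutions counted with multiplicity is then $\vHD(X_d)^2$, since the affine critical system on the product of the two moment-curve charts has $\vHD(X_d)^2=(2d^2-2d+1)^2$ solutions.

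Next I remove the contributions that are not genuine critical points. The origin $(\bz,\bw)=(0,0)$ is a degenerate solution. In the Euclidean case its multiplicity was $(d-1)^2$, coming from the order of vanishing of the $(d-1)$-degree partial derivatives in one set of variables. Here the equations in $\bz$ and in $\bw$ each vanish to order $d-1$ in both the $z$- and $w$-directions. So I expect the multiplicity at the origin to be the product $(d-1)^2\cdot(d-1)^2=(d-1)^4$. Finally, the parametrization $v_d$ is $d$-to-one on the cone, via $z\mapsto\xi z$ with $\xi^d=1$, and independently so on the $\bw$ side. This gives a factor $d^2$ and hence
\begin{equation*}
\vHD(X_d)^2=d^2\vHD(\hat{X}_d)+(d-1)^4.
\end{equation*}
Substituting $\vHD(X_d)=2d^2-2d+1$ from \cite[Proposition 2.18]{f2025} and solving gives $\vHD(\hat{X}_d)=\bigl((2d^2-2d+1)^2-(d-1)^4\bigr)/d^2$. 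The numerator factors as a difference of squares, $(2d^2-2d+1-(d-1)^2)(2d^2-2d+1+(d-1)^2)=d^2(3d^2-4d+2)$, which yields $3d^2-4d+2$. As a sanity check, $d=3$ gives $17$, and $d=2$ gives $6$.

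The main obstacle is justifying the multiplicity $(d-1)^4$ at the origin. I will compute it as a local intersection multiplicity. The local equations split into a $\bz$-block and a $\bw$-block, each with lowest-order terms of degree $d-1$ in the respective variables, and these leading forms intersect properly for generic $(\bu,\bv)$. The multiplicity should therefore be multiplicative, $(d-1)^2$ for each block. If the blocks do not decouple, I will instead compute it with a Newton-polytope/mixed-volume comparison, as in Proposition~\ref{bin}. A secondary point is checking that no solutions escape to the chart boundary $[1,1]$ or to infinity for generic data; this follows from genericity, as in the Euclidean argument.
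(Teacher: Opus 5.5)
Your plan is the paper's own argument. The paper only says that the Euclidean chart argument carries over: the cone system in $(z_1,z_2,w_1,w_2)$ has $\vHD(X_d)^2$ solutions, the origin absorbs $(d-1)^4$, and the remaining solutions come in orbits $(\xi\bz,\eta\bw)$ with $\xi^d=\eta^d=1$. It then verifies $(13^2-4^2)/3^2=17$ for $d=3$ in Example~\ref{extwist}.

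Two of your three ingredients are sound, and your treatment of the origin is more careful than the paper's. The lowest-order parts of the $\bz$- and $\bw$-equations are $-\nabla_{\bz}\langle\hat\psi_d(\bz),\bv\rangle_{\R}$ and $-\nabla_{\bw}\langle\bu,\hat\psi_d(\bw)\rangle_{\R}$. These have degree $d-1$ in $\bz$, respectively $\bw$, alone; they do not vanish to that order ``in both directions''. For generic $(\bu,\bv)$ they vanish simultaneously only at $0$, so the multiplicity is exactly $(d-1)^4$. The factor $d^2$ and your closing arithmetic are also fine.

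The gap is the step that carries all the content: that the cone system has $\vHD(X_d)^2$ solutions in total. Your reason for it is not valid.
\begin{itemize}
\item $\vHD(X_d)$ counts pairs $(z,w)$ of a coupled system. There are not $\vHD(X_d)$ choices for $\bz$ times $\vHD(X_d)$ independent choices for $\bw$, and the cone system is not a product of two moment-curve systems.
\item A degree count does not give the square automatically. Bihomogeneous B\'ezout on $\Pro^2\times\Pro^2$, for two equations of bidegree $(d-1,d)$ and two of bidegree $(d,d-1)$, gives $d^4+4d^2(d-1)^2+(d-1)^4$. For $d=3$ that is $241$, against $13^2=169$.
\item The excess comes from solutions at infinity that exist for every $(\bu,\bv)$: for example, $\bz$ at infinity in direction $(1,0)$ together with $\bw$ at infinity in direction $(0,1)$. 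So your remark that nothing escapes to infinity ``by genericity'' is false here.
\end{itemize}
In the Euclidean case the square is an honest B\'ezout number $(2d-1)^2$ with nothing at infinity, which is why the analogy looks convincing there. The paper merely asserts the Hermitian version.

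One way to close the gap is to compute $\vHD(\hat{X}_d)$ directly and let your arithmetic produce the identity. Parametrize $\bz=\lambda(1,t,\ldots,t^d)$ and $\bw=\mu(1,s,\ldots,s^d)$. Set $P(x)=\sum_{k=0}^d x^k$, $U(s)=\sum_k u_k s^k$ and $V(t)=\sum_k v_k t^k$.
\begin{itemize}
\item The critical equations give $\mu=V(t)/P(ts)$ and $\lambda=U(s)/P(ts)$. They reduce to $V'(t)P(ts)-sV(t)P'(ts)=0$ and $U'(s)P(ts)-tU(s)P'(ts)=0$.
\item The term $v_m t^m(ts)^n$ contributes $(m-n)v_m t^{m+n-1}s^n$, so the diagonal terms $m=n$ cancel.
\item The Newton polygons are therefore the hexagon with vertices $(0,0),(d-1,0),(2d-2,d-1),(2d-2,d),(d-1,d),(0,1)$ and its mirror image under $t\leftrightarrow s$. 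Each has area $d^2-1$.
\item Their Minkowski sum has area $d(5d-4)$, so the mixed volume is $d(5d-4)-2(d^2-1)=3d^2-4d+2$.
\end{itemize}
The BKK bound is attained. Every facial pair involves independent generic coefficients, except $\sum_{n<d}(d-n)x^n$ and $\sum_{n\geq 1}n x^{n-1}$ in $x=ts$. By Enestr\"om--Kakeya these have their roots in $|x|>1$ and $|x|<1$ respectively, so they share no root. This is where the canonical form matters: with Bombieri--Weyl weights the two polynomials become $d(1+x)^{d-1}$ and $dx(1+x)^{d-1}$. They then share the root $-1$, matching the smaller count of Proposition~\ref{bin}.

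Finally, one checks that for generic $(\bu,\bv)$ there are no solutions with $ts\,P(ts)U(s)V(t)=0$ and no critical points outside the chart.
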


\begin{example}\label{exratnorm2}
(Moment curve of degree $2$). This variety is a parabola and was studied in \cite[Theorem 3.19, Example 4.1]{f2025}, in particular it holds $\vHD(X)=5$ in accordance to Proposition~\ref{rat}..

(Rational normal curve of degree $2$). This variety was studied in \cite[Theorem 3.18, Example 4.1]{f2025}, in particular it holds $\vHD(\hat{X}_2)=6$ in accordance to Proposition~\ref{rat}. The polynomial of the vHD discriminant when $\bv=\bu$ is
\begin{align*}
  \bigl(&u_1^4u_2^2+8u_1^4u_3^2-2u_1^2u_2^2u_3^2+20u_1^3u_3^3+8u_1^2u_3^4+u_2^2u_3^4\bigr)^3\\
  &\times\bigl(32u_1^6+435u_1^4u_2^2+384u_1^2u_2^4+256u_2^6-240u_1^5u_3-960u_1^3u_2^2u_3-960u_1u_2^4u_3\\
  &\qquad+696u_1^4u_3^2+1098u_1^2u_2^2u_3^2+384u_2^4u_3^2-980u_1^3u_3^3-960u_1u_2^2u_3^3+696u_1^2u_3^4\\
  &\qquad+435u_2^2u_3^4-240u_1u_3^5+32u_3^6\bigr).
\end{align*}
Note that the dual variety is $(\hat{X}_2)^{\vee}=V(4z_1z_3-z_2^2)\subseteq\C^3$, and thus $\hat{X}_2$, are partially solved by \cite[Theorem 3.18, Theorem 3.23]{f2025}.
\end{example}

\begin{example}\label{extwist}
(Moment curve of degree $3$). This variety is one-to-one parametrized by
\begin{align*}
\psi_3\colon&\C\to X_3=V(z_3-z_1^3,\ z_2-z_1^2)\subseteq\C^3\\
&z\mapsto\quad(z,\ z^2,\ z^3)
\end{align*}
Critical points satisfy the equation
\begin{equation*}
    \partial_{z}\|\psi_3(z)-\bu\|_{\C}^2=(\bar{z}-\bar{u}_1)+2z(\bar{z}^2-\bar{u}_2)+3z^2(\bar{z}^3-\bar{u}_3)=0.
\end{equation*}
Introducing $\bw,\bv$ we compute $\vHD(X_3)=13$ in accordance to Proposition~\ref{rat}. The diagonal of the matrix representing the Hermitian Killing form of the system
\begin{equation*}
    \begin{cases}
        p(z,w)=(z-u_1)+2w(z^2-u_2)+3w^2(z^3-u_3)=0\\
        \bar{p}(w,z)=(w-\bar{u}_1)+2z(w^2-\bar{u}_2)+3z^2(w^3-\bar{u}_3)=0
    \end{cases}
\end{equation*}
which generates the Hermitian critical ideal when $\bv=\bar{\bu}$, with respect to the basis 
\begin{equation*}
    \lbrace [1],[z],[w],[z^2],[zw],[w^2],[z^2w],[zw^2],[w^3],[z^2w^2],[zw^3],[w^4],[zw^4]\rbrace
\end{equation*}
is $\begin{bmatrix}
    13 & -4 & -4 & -\frac{4}{3} & -\frac{4}{3} & -\frac{4}{3} & a & a & a & b & b & b & c
\end{bmatrix}$ where
\begin{align*}
    &a=\frac{81|u_3|^2+20}{9},\\ &b=\frac{96|u_2|^2-432|u_3|^2-28}{27},\\
    &c=\frac{45|u_1|^2-480|u_2|^2+810|u_3|^2-4}{81}.
\end{align*}
In particular, the Rayleigh quotient of this matrix possesses a negative minimum and from \cite[Corollary 3.14]{f2024} we obtain $\max\HD(X_3)\leq 13-2=11$.

(Rational normal curve of degree $3$ / Twisted cubic). The affine cone of the twisted cubic is three-to-one parametrized by
\begin{align*}
\hat{\psi}_3&\colon\C^2\longrightarrow\hat{X}_3=V(z_1z_3-z_2^2,\ z_2z_4-z_3^2,\ z_1z_4-z_2z_3)\subseteq\C^4\\
(&z_1,z_2)\mapsto\quad (z_1^3,\ z_1^2z_2,\ z_1z_2^2,\ z_2^3)
\end{align*}
Critical points satisfy the equations
\begin{equation*}
    \partial_{z_1}\|\hat{\psi}_3(z_1,z_2)-\bu\|_{\C}^2=\partial_{z_2}\|\hat{\psi}_3(z_1,z_2)-\bu\|_{\C}^2=0.
\end{equation*}
Introducing $\bw,\bv$ we get a zero dimensional system of degree $13^2$. Since the origin is a solution of degree $4^2$ and the parametrization is three-to-one we get $\vHD(\hat{X}_3)=(13^2-4^2)/3^2=17$ in accordance to Proposition~\ref{rat}.
\end{example}

%-------------------------------------------------------------------------------------------

\section{Conclusion}
We have used the fundaments of the Hermitian Distance degree along with other machinery to compute the number of critical points of some important tensors varieties. This paper highlights the possibility given by the line of research.

%-------------------------------------------------------------------------------------------

\section*{Acknowledgments}
I would like to thank Prof.\ Giorgio Ottaviani for directing my research during the Ph.D. which culminates in this work. I also want to thank Nikhil Ken for pointing out the upperbound in Corollary~\ref{corovd}.

%-------------------------------------------------------------------------------------------

\appendix
\renewcommand{\thesection}{\Alph{section}} % corrected redefinition of '\thesection'
\makeatletter
\renewcommand\@seccntformat[1]{\appendixname\ \csname the#1\endcsname.\hspace{0.5em}}
\makeatother

\section{Topological degree of generalized polynomials}\label{sec:app}

In this section we relate topological properties of the generalized polynomials with the cardinality of their set of zeros. We start by recalling some classical results that can be found for example in \cite[Section 2.2]{h2001}. Denote with $H_n(\Sp^{n})$ the $n$-th homology group of the sphere $\Sp^{n}$. Let $f\colon\Sp^{n}\to\Sp^{n}$ be a continuous map, the \emph{topological degree} or simply \emph{degree} of $f$ is the number $\deg^{\prime}f\in\N$ such that the induced pushforward in homology $f_{\ast}\colon H_n(\Sp^{n})\to H_n(\Sp^{n})$ is the multiplication by $\deg^{\prime}f$. Let $f,g\colon\Sp^{n}\to\Sp^{n}$ then:
\begin{enumerate}%[\normalfont i)]
    \item $\deg^{\prime}\left(f\circ g\right)=\deg^{\prime}f\cdot\deg^{\prime}g$.
    \item $f$ and $g$ are homotopic if and only if it holds the equality $\deg^{\prime}f=\deg^{\prime}g$.
\end{enumerate}

Suppose now $f$ has the property that for some point $w\in\Sp^n$, the preimage $f^{-1}(w)$ consists of finitely many points, say $z^{(1)},\ldots,z^{(d)}$ and let $U_1,\ldots,U_d$ be mutually disjoint neighborhood of those points respectively. For any $k\in[d]$ the \emph{local degree} of $f$ at $z^{(k)}$ is the number $\deg^{\prime}f\arrowvert_{z^{(k)}}\in\N$ such that the pushforward in homology of $f\colon U_k\setminus\lbrace z^{(k)}\rbrace\to f(U_k)\setminus\lbrace w\rbrace$ is the multiplication by $\deg^{\prime}f\arrowvert_{z^{(k)}}$. In particular:
\begin{enumerate}
    \setcounter{enumi}{2}
    \item It holds the equality $\deg^{\prime}f=\sum_{k=1}^d\deg^{\prime}f\arrowvert_{z^{(k)}}$.
    \item If $f$ maps homeomorphically $U_k$ into $f(U_k)$ then it holds $\deg^{\prime}f\arrowvert_{z^{(k)}}=\pm 1$ depending on the orientation induced by the mapping of $f$.
\end{enumerate}

It is well known that a univariate polynomial $p(z)$ can be extended to a continuous map from the sphere $\Sp^2$ to itself and that it holds $\deg^{\prime}p=\deg p$. We now apply these concepts to generalized polynomials. Some of the following results are known, however they are not very present in the literature and could be useful to recall them. We start by proving a basic lemma about the conjugation map.

\begin{lemma}\label{conjsp}
The conjugation map $\bar{\phantom{z}}$ can be interpreted as a continuous map from $\Sp^2$ to itself such that $\deg^{\prime}\bar{\phantom{z}}=-1$.
\end{lemma}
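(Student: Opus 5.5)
We identify $\Sp^2$ with the Riemann sphere $\C\cup\lbrace\infty\rbrace$ via stereographic projection. Conjugation $z\mapsto\bar z$ extends by setting $\bar{\infty}=\infty$. This extension is continuous at $\infty$ because $|\bar z|=|z|$: neighborhoods $\lbrace|z|>R\rbrace\cup\lbrace\infty\rbrace$ are mapped onto themselves. In sphere coordinates $(x_1,x_2,x_3)$, with stereographic projection from the north pole, $z=(x_1+ix_2)/(1-x_3)$. Conjugation therefore becomes the restriction of the linear map $(x_1,x_2,x_3)\mapsto(x_1,-x_2,x_3)$, that is, the reflection of $\R^3$ across the plane $x_2=0$. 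It is in particular a homeomorphism of $\Sp^2$ and an involution.

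To compute the degree I would use the classical fact that a reflection of $\Sp^n$ has degree $-1$, see \cite[Section 2.2]{h2001}. To keep the argument inside the tools listed above, I would instead use the local degree, properties 3 and 4. Take the regular value $w=1$. Its preimage is the single point $z=1$, since conjugation is bijective. On a small disk $U$ around $1$, conjugation is the real-linear map $(x,y)\mapsto(x,-y)$ of $\R^2\simeq\C$. Its determinant is $-1$, so it reverses orientation and maps $U$ homeomorphically onto $\bar U=U$. By property 4 the local degree is $-1$, and by property 3 the global degree is $\deg'\bar{\phantom{z}}=-1$.

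The main subtlety is justifying in property 4 that "orientation reversing" indeed gives $-1$ rather than $+1$ in homology. I would handle this by checking that an orientation-reversing linear map of $\R^2$ acts by $-1$ on $H_1(U\setminus\lbrace 1\rbrace)\simeq H_1(\Sp^1)$: it sends the generating loop $e^{i\theta}$ to $e^{-i\theta}$, which is the inverse loop. Alternatively, one can note that $\deg'(\bar{\phantom{z}}\circ\bar{\phantom{z}})=\deg'\Id=1$, so the degree is $\pm1$ by property 1. A degree of $+1$ would make conjugation homotopic to the identity by property 2, which the loop computation rules out.
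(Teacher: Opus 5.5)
Your proposal is correct and takes essentially the same route as the paper: extend conjugation to $\C\cup\lbrace\infty\rbrace$ by fixing $\infty$, observe that it is an orientation-reversing homeomorphism, and read off $\deg^{\prime}\bar{\phantom{z}}=-1$ from properties 3 and 4 of the degree. Your extra details (continuity at $\infty$ via $|\bar z|=|z|$, the explicit regular value $1$, and the loop check on $H_1(U\setminus\lbrace 1\rbrace)$) make explicit what the paper's one-line proof leaves implicit, while your closing argument via $\deg^{\prime}(\bar{\phantom{z}}\circ\bar{\phantom{z}})=1$ is redundant, since it still relies on that same loop computation.
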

\begin{proof}
Consider $\Sp^2=\C\cup\lbrace\infty\rbrace$ where $\infty$ is the point at infinity. We just define the map to be classical conjugation on $\C$ and to send $\infty$ to itself, in particular this map is orientation reversing and by properties 3.\ and 4.\ of the topological degree the claim follows.
\end{proof}

The next result characterizes the set of non constant generalized polynomials that can be extended as maps on the sphere $\Sp^2$.

\begin{proposition}\label{lemconj1}
A generalized polynomial $p(z,\bar{z})$ is constant or such that 
\begin{equation*}
    \lim_{|z|\rightarrow\infty}|p(z,\bar{z})|=\infty
\end{equation*}
if and only if it can be interpreted as a continuous map from $\Sp^2$ to itself.
\end{proposition}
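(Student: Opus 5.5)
The statement concerns extending $p\colon\C\to\C$ to a map $\hat p\colon\Sp^2\to\Sp^2$, where $\Sp^2=\C\cup\lbrace\infty\rbrace$ as in Lemma~\ref{conjsp}. I read "can be interpreted as a continuous map" as: $p$, continuous on $\C$, admits a continuous extension $\hat p$ to $\Sp^2$ with $\hat p(\infty)=\infty$. If $p$ is constant we use the constant extension instead. The plan is to prove both directions by working in the chart at infinity, $\zeta=1/z$.

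For the direction "$\Rightarrow$", assume $p$ is non constant and $|p(z,\bar z)|\to\infty$ as $|z|\to\infty$. Define $\hat p(\infty)=\infty$ and $\hat p=p$ on $\C$. Continuity on $\C$ is clear, since $p$ is a polynomial in the real coordinates $\Re z,\Im z$. A neighbourhood basis of $\infty$ in $\Sp^2$ is given by the sets $\lbrace |w|>R\rbrace\cup\lbrace\infty\rbrace$. The limit hypothesis says exactly that for every $R$ there is $R'$ with $|z|>R'\Rightarrow |p(z,\bar z)|>R$. This is continuity of $\hat p$ at $\infty$, and the constant case is trivial.

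For the direction "$\Leftarrow$", assume $p$ is non constant and extends continuously to $\hat p$ on $\Sp^2$. The main point is to rule out $\hat p(\infty)=c\in\C$. Suppose this happens. Write $p=\sum_{j+k\leq m}a_{jk}z^j\bar z^k$ with top total degree $m$ and top homogeneous part $p_m(z,\bar z)$. In polar coordinates $z=re^{i\theta}$ this gives $p=r^m p_m(e^{i\theta},e^{-i\theta})+O(r^{m-1})$. If $p_m(e^{i\theta},e^{-i\theta})\not\equiv0$, pick $\theta_0$ where it is nonzero; then $|p|\to\infty$ along that ray, a contradiction. But $p_m(e^{i\theta},e^{-i\theta})=\sum a_{jk}e^{i(j-k)\theta}$ is a trigonometric polynomial whose frequencies $j-k$ are distinct for fixed $m=j+k$. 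So it vanishes identically only if $p_m=0$, which is impossible. Hence every non constant $p$ is unbounded along some ray, and $\hat p(\infty)=\infty$. It remains to upgrade this to $|p|\to\infty$ along every sequence $|z|\to\infty$. Here continuity of $\hat p$ at $\infty$ does the work: since $\hat p(\infty)=\infty$, every neighbourhood $\lbrace|w|>R\rbrace$ of $\infty$ has as preimage a neighbourhood $\lbrace|z|>R'\rbrace$ of $\infty$. This is precisely $\lim_{|z|\to\infty}|p|=\infty$.

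The expected obstacle is the interpretive issue of whether "interpreted as a map on $\Sp^2$" forces $\hat p(\infty)=\infty$. My argument covers this anyway: a finite value at $\infty$ is excluded by the ray argument, because the top homogeneous part cannot vanish on the unit circle. The subtle point is exactly this step, since $p_m$ may vanish on some rays, as in $z\bar z-\ldots$ type cancellations. That is why I argue only existence of one good ray, via the distinct-frequency observation, rather than uniform growth.
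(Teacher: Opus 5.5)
Your proof is correct and follows essentially the same route as the paper. The forward direction is the same continuity check at $\infty$. For the converse you use the same key fact: the top-degree homogeneous part is nonzero on some ray, so $|p|\to\infty$ along that ray and a finite value $\hat p(\infty)$ is impossible. Your distinct-frequency argument is equivalent to the paper's observation that $q(\lambda)=\sum_\ell a_{\deg p-\ell,\ell}\lambda^\ell$ is a nonzero polynomial with finitely many zeros on $\Sp^1$. The only difference is that you argue directly from the extension rather than by the paper's contrapositive ``the limit is undefined'', which makes the final step (continuity at $\infty$ with value $\infty$ gives the limit) slightly more explicit.
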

\begin{proof}
Consider $\Sp^2=\C\cup\lbrace\infty\rbrace$ where $\infty$ is the point at infinity. For the only if part we define the map to be equal to $p$ on $\C$ and to send $\infty$ to itself if $\deg p\geq 1$ and to $p$ otherwise, i.e.\ if $\deg p=0$ we can regard $p$ as a constant. We need to check the continuity for the case $\deg p\geq 1$. For the open subset $\C$ we take the identity as coordinate map and obtain $\Id\circ p\circ\Id=p$. For a neighborhood of $\infty$ not containing the origin a coordinate map $\psi$ is given by $z\mapsto 1/z$ and the composition
\begin{equation*}
    \psi\circ p\circ\psi=\frac{1}{p(1/z,1/\bar{z})}
\end{equation*}
can be continuously extended since by hypothesis $\lim_{z\to 0}|p(1/z,1/\bar{z})|=\infty$ and thus $\lim_{z\to 0}\psi\circ p\circ\psi=0$.

For the contrary assume $p$ is non constant, otherwise we have finished. Moreover, assume that the limit of the statement does not hold. We prove the limit is thus non existent and then the map can not be continuously extended obtaining a contradiction. Collect the monomial with same degree and rewrite the generalized polynomial as
\begin{equation*}
    p(z,\bar{z})=\sum_{s=0}^{\deg p}\left(z^{s}\sum_{\ell=0}^{s}a_{s-\ell,\ell}\left(\frac{\bar{z}}{z}\right)^{\ell}\right).
\end{equation*}
Now, use the polar coordinates $z=\rho e^{i\theta}$ to write
\begin{equation}\label{eqiff}
    p(\rho e^{i\theta},\rho e^{-i\theta})=\sum_{s=0}^{\deg p}\left(\rho^{s}e^{si\theta}\sum_{\ell=0}^{s}a_{s-\ell,\ell}(e^{-2i\theta})^{\ell}\right).
\end{equation}
Note that the polynomial 
\begin{equation*}
    q(\lambda)=\sum_{\ell=0}^{\deg p}a_{\deg p-\ell,\ell}\lambda^{\ell}
\end{equation*}
has a finite number of solutions on $\Sp^1\subseteq\C$. In particular, the modulus of the coefficient of the monomial $\rho^{\deg p}$ of the polynomial in $\rho$ of equation \eqref{eqiff}, that is 
\begin{equation*}
    \left|e^{\deg pi\theta}\sum_{\ell=0}^{\deg p}a_{s-\ell,\ell}(e^{-2i\theta})^{\ell}\right|=|q(e^{-2i\theta})|,
\end{equation*}
is different from $0$ for some $\theta_1\in[0,2\pi]$. From this, it holds the limit 
\begin{equation*}
    \lim_{\rho\to\infty}|p(\rho e^{i\theta_1},\rho e^{-i\theta_1})|=\lim_{\rho\to\infty}\rho^{\deg p}|q(e^{-2i\theta_1})|=\infty
\end{equation*}
and hence the limit of the statement has to be undefined.
\end{proof}

In particular, a generalized polynomial $p(z,\bar{z})$ is orientation preserving or orientation reversing at a point $z_0$ if the determinant
\begin{equation*}
    \det\begin{bmatrix}
        (\partial_zp)(z_0,\bar{z}_0) & (\partial_{\bar{z}}p)(z_0,\bar{z}_0)\\
        (\partial_z\bar{p})(\bar{z}_0,z_0) & (\partial_{\bar{z}}\bar{p})(\bar{z}_0,z_0)
    \end{bmatrix}=\left|(\partial_zp)(z_0,\bar{z}_0)\right|^2-\left|(\partial_{\bar{z}}p)(z_0,\bar{z}_0)\right|^2
\end{equation*}
is positive or negative respectively. On the other hand, $z_0$ is said to be \emph{singular} if this determinant vanishes.

By topological arguments, if $\lim_{|z|\rightarrow\infty}|p(z,\bar{z})|=\infty$ then $p$ has zeros inside a compact subset, however the contrary is not valid. For example, consider the polynomial $p(z,\bar{z})=z+\bar{z}+i$ which has no zeros and the sequence $\lbrace b_k=ki\rbrace_{k\in\N}$ such that $|b_k|\rightarrow\infty$ for $k\to\infty$. Then, it is easy to compute
\begin{equation*}
    \lim_{k\to\infty}p(b_k,\bar{b}_k)=i.
\end{equation*} 

With the next result we compute the degree of a monomial on the sphere.

\begin{corollary}\label{degnm}
The topological degree of the continuous map $z^n\bar{z}^m\colon\Sp^2\to\Sp^2$ is $n-m$.
\end{corollary}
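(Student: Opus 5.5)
The plan is to write the monomial as a composition of maps on $\Sp^2$ whose degrees we already know, and then use multiplicativity of the degree (property 1). One subtlety comes first: the product $z^n\bar z^m$ of two maps of the sphere is not a composition. So we instead use polar coordinates together with a homotopy.

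First, the extension. For $n+m\geq 1$ we have $|z^n\bar z^m|=|z|^{n+m}\to\infty$, so by Proposition~\ref{lemconj1} the monomial extends to a continuous map $\Sp^2\to\Sp^2$ fixing $\infty$. The case $n=m=0$ is constant and has degree $0=n-m$.

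Next, write $z=\rho e^{i\theta}$, so that
\begin{equation*}
z^n\bar z^m=\rho^{n+m}e^{i(n-m)\theta}.
\end{equation*}
Consider the family
\begin{equation*}
H_s(\rho e^{i\theta})=\rho^{(1-s)(n+m)+s|n-m|}\,e^{i(n-m)\theta},\qquad s\in[0,1].
\end{equation*}
We need the exponent of $\rho$ to stay positive along the homotopy. If $n\neq m$, it is positive for all $s$. In that case each $H_s$ sends $0\mapsto 0$ and $\infty\mapsto\infty$, and $H$ is jointly continuous on $\Sp^2\times[0,1]$, including at $0$ and $\infty$, because the radial exponent is bounded away from $0$. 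So $z^n\bar z^m$ is homotopic to $H_1$. The map $H_1$ equals $z^{n-m}$ when $n>m$, and $\bar z^{\,m-n}$ when $m>n$.

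The degree then follows from known results. A polynomial $z^k$ has degree $k$. Lemma~\ref{conjsp} together with property 1 gives
\begin{equation*}
\deg^{\prime}\bar z^{\,k}=\deg^{\prime}(\bar{\phantom{z}}\circ z^k)=-k.
\end{equation*}
Property 2 then yields degree $n-m$ in both cases.

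The remaining case $n=m\geq 1$ is the main obstacle. Here the map is $|z|^{2n}$, whose image is $[0,\infty]\subset\Sp^2$, a proper subset of the sphere. A non-surjective map is null-homotopic, so its degree is $0=n-m$.

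Alternatively, all cases can be handled with local degrees (properties 3 and 4). Take a generic regular value $w\neq 0,\infty$. When $n\neq m$, its preimages are the $|n-m|$ solutions of $\rho^{n+m}=|w|$, $e^{i(n-m)\theta}=w/|w|$. At each such point the Jacobian determinant is
\begin{equation*}
|\partial_z p|^2-|\partial_{\bar z}p|^2=(n^2-m^2)|z|^{2(n+m-1)},
\end{equation*}
whose sign is $\operatorname{sign}(n-m)$. Summing over the preimages again gives $n-m$, which is a useful cross-check.

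The delicate step is the continuity of the homotopy at $\infty$. I expect to handle it in the chart $z\mapsto 1/z$, exactly as in the proof of Proposition~\ref{lemconj1}.
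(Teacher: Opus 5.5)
Your proof is correct, but your main route differs from the paper's. The paper first reduces to $n\geq m$ via Lemma~\ref{conjsp}, since $\bar z^n z^m$ is the conjugate of $z^n\bar z^m$. It then observes that $|z|^{2m}z^{n-m}$ is an orientation-preserving local homeomorphism on $\C\setminus\lbrace 0\rbrace$ whose fibres over nonzero points have exactly $n-m$ elements, and concludes by properties 3 and 4. That is essentially your ``alternative'' cross-check, where you also make the orientation claim explicit through the Jacobian $(n^2-m^2)|z|^{2(n+m-1)}$.

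Your main argument instead deforms the radial exponent from $n+m$ to $|n-m|$. This reduces everything to $z^{n-m}$ or $\bar z^{\,m-n}$, after which you need only properties 1 and 2, Lemma~\ref{conjsp}, and the known degree of holomorphic polynomials. It avoids regular values and local degrees entirely. The cost is checking joint continuity of the homotopy at $0$ and $\infty$, which your bound on the exponent ($\geq|n-m|\geq 1$) handles.

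Your write-up has one real advantage in the case $n=m\geq 1$. There $|z|^{2m}$ is not a local homeomorphism, and its fibres over positive reals are circles. So the paper's argument does not literally apply: one would have to pick a value off $[0,\infty)$ and note that its fibre is empty. Your non-surjectivity (null-homotopy) argument settles this case cleanly.
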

\begin{proof}
We let $n\geq m$, the cases $n<m$ follows from Lemma~\ref{conjsp} since $\deg^{\prime}z^n\bar{z}^m=-\deg^{\prime}\bar{z}^nz^m$. By noting that the map $z^n\bar{z}^m=|z|^{2m}z^{n-m}$ is a local homeomorphism on $\C\setminus\lbrace 0\rbrace$ which is orientation preserving and that the cardinality of the preimage of any element in $\C\setminus\lbrace 0\rbrace$ is equal to $n-m$, the assertion follows by properties 3.\ and 4.\ of the topological degree.
\end{proof}

The following result links the topological degree of a generalized polynomial on the sphere to the number of its zeros.

\begin{proposition}\label{lbdeg}
The number of zeros of a generic generalized polynomial $p(z,\bar{z})$ that satisfies the hypothesis of Proposition~\ref{lemconj1} is bounded from below by $|\deg^{\prime}p|$. 
\end{proposition}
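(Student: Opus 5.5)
By Proposition~\ref{lemconj1}, $p$ extends to a continuous map $p\colon\Sp^2\to\Sp^2$ with $\infty\mapsto\infty$. Its topological degree $\deg^{\prime}p$ is therefore well defined. The plan is to compute $\deg^{\prime}p$ as a sum of local degrees over the preimage of $0$, using properties 3.\ and 4.\ of the topological degree, and to show that each local degree is $\pm1$ for generic $p$.

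First I would arrange that $0$ is a regular value. For a generic generalized polynomial, meaning a generic choice of coefficients, the zeros are nonsingular in the sense defined above. The singular condition
\begin{equation*}
|\partial_zp|^2-|\partial_{\bar z}p|^2=0
\end{equation*}
is a real hypersurface condition, and adding it to the two real equations $\Re p=\Im p=0$ gives three real conditions on a point of the real plane. Perturbing the constant term, $p\mapsto p-c$, does not change the derivatives. So by Sard's theorem, applied to $p$ viewed as a smooth map $\R^2\to\R^2$, for almost every $c$ the value $c$ is a regular value. I would interpret ``generic'' in this sense, possibly intersected with the Zariski-open conditions on the top-degree coefficients needed for Proposition~\ref{lemconj1}.

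Next I would show that $p^{-1}(0)$ is finite, so that the local-degree formalism applies. The condition $|p|\to\infty$ shows that $p^{-1}(0)$ lies in a compact disc. Regular points of a smooth map are isolated, and a compact set of isolated points is finite. Also, $\infty$ is not in the preimage, since $p(\infty)=\infty$ when $p$ is nonconstant. The constant case is excluded by genericity, or it is trivial.

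At each zero $z^{(k)}$, the real Jacobian determinant equals $|\partial_zp|^2-|\partial_{\bar z}p|^2\neq0$. By the inverse function theorem, $p$ is a local homeomorphism near $z^{(k)}$, so property 4.\ gives $\deg^{\prime}p|_{z^{(k)}}=\pm1$, with the sign fixed by orientation. Property 3.\ then gives $\deg^{\prime}p=\sum_k\pm1$, and hence $|\deg^{\prime}p|\le\#p^{-1}(0)$.

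The main obstacle is to make ``generic'' precise so that the regularity argument is compatible with the paper's algebraic notion of genericity. The constant-term perturbation via Sard only yields a full-measure set, not a Zariski-open one. I would address this by noting that the set of coefficients for which some zero is singular is contained in the real-algebraic discriminant locus, obtained by eliminating $z$ from $p=0$ and $|\partial_zp|^2=|\partial_{\bar z}p|^2$. This locus is a proper real-algebraic subset of the coefficient space, and its complement is open and dense.
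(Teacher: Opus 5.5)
Your proposal is correct and follows the paper's argument. At each nonsingular zero the local degree is $\pm1$ by property 4., summing via property 3.\ and applying the triangle inequality gives $\#p^{-1}(0)\geq|\deg^{\prime}p|$, and your Sard-type justification of genericity and compactness argument for finiteness fill in details the paper leaves implicit (the paper simply dismisses the case of infinitely many zeros as trivial).
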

\begin{proof}
If $p$ has an infinite number of zeros the statement is trivial. If not, since the map induced by a generic generalized polynomial is locally a homeomorphism in some neighborhoods of the preimages of $0$, the assertion follows from the properties 3.\ and 4.\ of the topological degree by the inequalities 
\begin{equation*}
    \sum_{z\in p^{-1}(0)}1=\sum_{z\in p^{-1}(0)}\left|\deg^{\prime}p\arrowvert_{z}\right|\geq\left|\sum_{z\in p^{-1}(0)}\deg^{\prime}p\arrowvert_{z}\right|=\left|\deg^{\prime}p\right|.
\end{equation*}
\end{proof}

The genericity assumption in the last proposition is essential to say that the generalized polynomial is a local homeomoprhism on the preimage of $0$. As a counterexample we can take $z^{n}\bar{z}^{m}$ that has $0$ as the only zero and can have arbitrary topological degree.

We compute the degree of a family of generalized polynomials that are useful to us.

\begin{proposition}\label{poldeg}
Let $p(z,\bar{z})=\sum a_{k,j}z^k\bar{z}^j$ be a generalized polynomial with $a_{n,m}z^n\bar{z}^m$ the only monomial of $p$ such that $n+m=\deg p$, then $\deg^{\prime}p=n-m$. In particular, the equation $p=0$ generically admits at least $|n-m|$ solutions.
\end{proposition}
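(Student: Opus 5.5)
The plan is to deform $p$ to its leading monomial through a homotopy of maps $\Sp^2\to\Sp^2$ and then read off the degree from Corollary~\ref{degnm}. Write $N=\deg p=n+m$ and assume $N\geq 1$. I first need to check that $p$ satisfies the hypothesis of Proposition~\ref{lemconj1}, so that it defines a continuous self-map of $\Sp^2$. Since $a_{n,m}z^n\bar z^m$ is the only monomial of top degree, we have
\begin{equation*}
|p(z,\bar z)|\geq |a_{n,m}||z|^{N}-\sum_{k+j<N}|a_{k,j}||z|^{k+j}.
\end{equation*}
The right-hand side tends to $\infty$ as $|z|\to\infty$, so Proposition~\ref{lemconj1} applies to $p$. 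The monomial $a_{n,m}z^n\bar z^m$ also satisfies the hypothesis, because its modulus is $|a_{n,m}||z|^N$.

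Next I define, for $t\in[0,1]$, the family
\begin{equation*}
p_t(z,\bar z)=a_{n,m}z^n\bar z^m+t\sum_{k+j<N}a_{k,j}z^k\bar z^j,
\end{equation*}
with $p_t(\infty)=\infty$. The estimate above is uniform in $t$: there is some $R$ such that $|p_t(z)|\geq \tfrac12|a_{n,m}||z|^N$ for all $|z|\geq R$ and all $t\in[0,1]$. This uniform lower bound is what makes $(t,z)\mapsto p_t(z)$ jointly continuous on $[0,1]\times\Sp^2$ at the point at infinity. In the chart $w=1/z$, the map $(t,w)\mapsto 1/p_t(1/w,1/\bar w)$ tends to $0$ uniformly in $t$ as $w\to 0$. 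Continuity on $[0,1]\times\C$ is clear, since $p_t$ is polynomial in $t$, $z$ and $\bar z$. Hence $p=p_1$ is homotopic to $p_0=a_{n,m}z^n\bar z^m$.

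By property 2 of the topological degree, $\deg' p=\deg' p_0$. Multiplication by the nonzero constant $a_{n,m}$ extends to a homeomorphism of $\Sp^2$ that fixes $\infty$. It is homotopic to the identity, via a path from $a_{n,m}$ to $1$ in $\C^\ast$, so it has degree $1$. Property 1 together with Corollary~\ref{degnm} then gives $\deg' p_0=n-m$. The final claim, that $p=0$ generically has at least $|n-m|$ solutions, follows directly from Proposition~\ref{lbdeg}.

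The only delicate point is the joint continuity of the homotopy at $\infty$, which the uniform estimate handles. I also need to treat the degenerate case $N=0$ separately. In that case $p$ is a nonzero constant, so its degree is $0=n-m$ and the statement holds trivially.
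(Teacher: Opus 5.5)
Your proposal is correct and follows essentially the same route as the paper: the homotopy $t\,p+(1-t)a_{n,m}z^n\bar z^m$ is extended to $\Sp^2$ using growth at infinity that is uniform in $t$, and then property 2, Corollary~\ref{degnm} and Proposition~\ref{lbdeg} finish the argument. You also make explicit two details the paper leaves implicit: the scalar factor $a_{n,m}$ contributes degree $1$ by property 1, and the constant case $N=0$ is handled separately.
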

\begin{proof}
The continuous homotopy
\begin{align*}
    F\colon&\C\times[0,1]\to\C\\
    &(z,t)\mapsto tp+(1-t)a_{n,m}z^n\bar{z}^m=t\sum_{k+j<\deg p}a_{k,j}z^k\bar{z}^j+a_{n,m}z^n\bar{z}^m
\end{align*}
of $p$ and $a_{n,m}z^n\bar{z}^m$ satisfies the hypothesis of Proposition~\ref{lemconj1} uniformly in $t$, then the homotopy can be extended on $\Sp^2$ and thus $\deg^{\prime}p=\deg^{\prime}a_{n,m}z^n\bar{z}^m=n-m$ by property 2.\ of the topological degree and Corollary~\ref{degnm}. The last part follows from Proposition~\ref{lbdeg}.
\end{proof}

The next result provides a sufficient condition for a generalized polynomial to be extended on the sphere.

\begin{lemma}\label{limitlem}
Let $p(z,\bar{z})=\sum a_{k,j}z^k\bar{z}^j$ be a non constant generalized polynomial, if the polynomial 
\begin{equation*}
    q(\lambda)=\sum_{\ell=0}^{\deg p}a_{\deg p-\ell,\ell}\lambda^{\ell}
\end{equation*}
has no zeros in $\Sp^1$ then $\lim_{|z|\rightarrow\infty}|p(z,\bar{z})|=\infty$.
\end{lemma}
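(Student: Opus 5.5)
The plan is to isolate the top-degree homogeneous part of $p$ and show that it dominates everything else as $|z|\to\infty$, exactly as in the polar-coordinate computation in the proof of Proposition~\ref{lemconj1}. Set $D=\deg p\geq 1$ and write $z=\rho e^{i\theta}$. Grouping the monomials by total degree as in equation \eqref{eqiff}, we get
\begin{equation*}
    p(\rho e^{i\theta},\rho e^{-i\theta})=\rho^{D}e^{Di\theta}q(e^{-2i\theta})+\sum_{s=0}^{D-1}\rho^{s}e^{si\theta}\sum_{\ell=0}^{s}a_{s-\ell,\ell}(e^{-2i\theta})^{\ell}.
\end{equation*}
The leading coefficient therefore has modulus $|q(e^{-2i\theta})|$, because $|e^{Di\theta}|=1$.

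The key step, and the only one with any content, is to make the leading term uniformly large. The hypothesis says $q$ has no zeros on $\Sp^1$. Since $\theta\mapsto e^{-2i\theta}$ maps $[0,2\pi]$ onto $\Sp^1$, the continuous function $\theta\mapsto|q(e^{-2i\theta})|$ is strictly positive on the compact set $[0,2\pi]$, so it attains a minimum $c>0$. Here we use that $q$ is not identically zero; this is automatic because $\deg p=D$ means some $a_{D-\ell,\ell}\neq 0$.

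It remains to bound the lower-order terms. Each inner sum is bounded in modulus by $\sum_{\ell}|a_{s-\ell,\ell}|$, independently of $\theta$. Hence there is a constant $C$ with
\begin{equation*}
    \Bigl|\sum_{s=0}^{D-1}\rho^{s}e^{si\theta}\sum_{\ell=0}^{s}a_{s-\ell,\ell}(e^{-2i\theta})^{\ell}\Bigr|\leq C(1+\rho^{D-1})
\end{equation*}
for all $\theta$. By the reverse triangle inequality,
\begin{equation*}
    |p(z,\bar z)|\geq c\rho^{D}-C(1+\rho^{D-1}).
\end{equation*}
The right-hand side tends to $\infty$ as $\rho\to\infty$, uniformly in $\theta$, which is exactly the statement $\lim_{|z|\to\infty}|p(z,\bar z)|=\infty$.

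The main point to watch is this uniformity in $\theta$: it is what distinguishes the claim from the weaker fact, used in Proposition~\ref{lemconj1}, that the limit along a single ray is infinite. The compactness argument for the minimum $c>0$ supplies it.
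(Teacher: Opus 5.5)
Your proof is correct and follows essentially the same route as the paper: you expand in polar coordinates, identify $|q(e^{-2i\theta})|$ as the modulus of the coefficient of $\rho^{\deg p}$, and use a positive lower bound on it that is uniform in $\theta$. You also spell out two steps the paper leaves implicit, namely the compactness argument giving $c>0$ and the bound $C(1+\rho^{D-1})$ on the lower-order terms.
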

\begin{proof}
Using the polar coordinates $z=\rho e^{i\theta}$, by the same steps of the second part of the proof of Proposition~\ref{lemconj1}, the limit in the statement reads
\begin{equation*}
    \lim_{\rho\rightarrow\infty}|p(\rho e^{i\theta},\rho e^{-i\theta})|=\lim_{\rho\rightarrow\infty}\left|\sum_{s=0}^{\deg p}\left(\rho^{s}e^{si\theta}\sum_{\ell=0}^{s}a_{s-\ell,\ell}(e^{-2i\theta})^{\ell}\right)\right|=\infty
\end{equation*}
uniformly for $\theta\in[0,2\pi]$. Since the module of the quantity multiplying $\rho^{\deg p}$ is $|q(e^{-2i\theta})|$ which by hypothesis is bounded from below by a positive constant independent from $\theta$ this limit holds.
\end{proof}

The contrary of Lemma~\ref{limitlem} is not valid, for example consider the polynomial $p(z,\bar{z})=z^2+\bar{z}^2+z$ for which it holds 
\begin{equation*}
    |p(z,\bar{z})|^2=p^{\Re}(z,\bar{z})^2+p^{\Im}(z,\bar{z})^2=\left(z^2+\bar{z}^2+\frac{z+\bar{z}}{2}\right)^2+\left(\frac{z-\bar{z}}{2i}\right)^2
\end{equation*}
and thus $\lim_{|z|\rightarrow\infty}|p(z,\bar{z})|=\infty$. In this case, the polynomial $q(\lambda)=a_{2,0}+a_{0,2}\lambda=1+\lambda$ of Lemma~\ref{limitlem} vanishes for $\lambda=-1\in\Sp^1$. 

\begin{remark}
The condition of Lemma~\ref{limitlem} is vacuously satisfied if there is only one leading term. While, if there are two leading terms, the condition is equivalent to have leading coefficients with different norms. Moreover, from the proof of \cite[Lemma 4.5]{f2024}, it follows that the condition of Lemma~\ref{limitlem} is also necessary if the generalized polynomial is of degree one.
\end{remark}

%-------------------------------------------------------------------------------------------

\bibliographystyle{elsarticle-num}
\bibliography{references}

@article{m2003,
  title =	 {Classification of multipartite entangled states by multidimensional determinants},
  author =	 {Akimasa Miyake},
  journal =	 {Physical Review A},
  volume =	 67,
  doi =		 {10.1103/PhysRevA.67.012108},
  year =	 2003,
  pages =	 {012108},
}

@book{h2001,
  author =	 {Allen Hatcher},
  title =	 {Algebraic {T}opology},
  publisher =	 {Cambridge University Press},
  year =	 2002}

@book{gkz1994,
  author =	 {Israel M. Gelfand and Mikhail M. Kapranov and Andrei V. Zelevinsky},
  title =	 {Discriminants, resultants and higher-dimensional determinants},
  publisher =	 {Birkh\"auser},
  year =	 1994}

@article{oss2014,
  title =	 {Exact Solutions in {S}tructured {L}ow-{R}ank {A}pproximation},
  author =	 {Giorgio Ottaviani and Pierre-Jean Spaenlehauer and Bernd Sturmfels},
  doi =		 {10.1137/13094520X},
  journal =	 {SIAM Journal on Matrix Analysis and Applications},
  number =   4,
  volume =	 35,
  year =	 2014,
  month =	 dec,
  pages =	 {1521--1542},
}

@article{wg2003,
  title =	 {Geometric measure of entanglement and applications to bipartite and multipartite quantum states},
  author =	 {Tzu-Chieh Wei and Paul M. Goldbart},
  doi =		 {10.1103/PhysRevA.68.042307},
  journal =	 {Physical Review A},
  volume =	 68,
  year =	 2003,
  month =	 oct,
  pages =	 {042307},
}

@inproceedings{lim2006,
    author = {Lek-Heng Lim},
    title = {Singular values and eigenvalues of tensors: {A} variational approach},
    booktitle = {Proceedings of the IEEE International Workshop on Computational Advances in Multi-Sensor Adaptive Processing},
    doi =		 {10.48550/arXiv.math/0607648},
    volume = 1,
    year = 2005,
    pages =	 {129--132},
}

@inproceedings{kn2005,
    author = {Dmitry Khavinson and Genevra Neumann},
    title = {On the number of zeros of certain rational harmonic functions},
    volume = 134,
    booktitle = {Proceedings of the American Mathamatical Society},
    doi =		 {10.48550/arXiv.math/0401188},
    year = 2006,
    month = apr,
    pages =	 {1077--1085},
}

@article{dh2016,
  title =	 {The average number of critical rank-one approximations to a tensor},
  author =	 {Jan Draisma and Emil Horobe\c{t}},
  doi =		 {10.1080/03081087.2016.1164660},
  journal =	 {Linear and Multilinear Algebra},
  volume =	 64,
  issue =    12,
  year =	 2016,
  pages =	 {2498--2518},
}

@article{dhost2014,
  title =	 {The {E}uclidean {D}istance {D}egree of an {A}lgebraic {V}ariety},
  author =	 {Jan Draisma and Emil Horobe\c{t} and Giorgio Ottaviani and Bernd Sturmfels and Rekha R. Thomas},
  doi =		 {10.1007/s10208-014-9240-x},
  journal =	 {Foundations of Computational Mathematics},
  volume =	 16,
  year =	 2015,
  pages =	 {99--149},
}

@article{hs2009,
  title =	 {The geometric measure of multipartite entanglement and the singular values of a hypermatrix},
  author =	 {Joseph J. Hilling and Anthony Sudbery},
  doi =		 {10.1063/1.3451264},
  journal =	 {Journal of Mathematical Physics},
  volume =	 51,
  year =	 2010,
  pages =	 {072102},
}

@article{f2025,
  title =	 {The {H}ermitian {D}istance degree of an {A}lgebraic {V}ariety},
  author =	 {Davide Furch\`i},
  doi =		 {10.48550/arXiv.2510.19461},
  journal =	 {arXiv},
  year =	 {2026},
}

@article{f2024,
  title =	 {The {H}ermitian {K}illing form and root counting of complex polynomials with conjugate variables},
  author =	 {Davide Furch\`i},
  doi =		 {10.1016/j.laa.2024.11.028},
  journal =	 {Linear Algebra and its Applications},
  volume =	 708,
  year =	 2025,
  pages =	 {93--111},
}

@article{p2002,
  title =	 {The (matrix) discriminant as a determinant},
  author =	 {Beresford N. Parlett},
  doi =		 {10.1016/S0024-3795(02)00335-X},
  journal =	 {Linear Algebra and its Applications},
  volume =	 335,
  issue =    {1--3},
  year =	 2002,
  pages =	 {85--101},
}

@article{mm2017,
  title =	 {The number of real eigenvectors of a real polynomial},
  author =	 {Mauro Maccioni},
  doi =		 {10.1007/s40574-016-0112-y},
  journal =	 {Bollettino dell'Unione Matematica Italiana},
  volume =	 11,
  issue =    {1--3},
  year =	 2018,
  pages =	 {125--145},
}

@article{of2014,
  title =	 {The number of singular vector tuples and uniqueness of best rank one approximation of tensors},
  author =	 {Shmuel Friedland and Giorgio Ottaviani},
  doi =		 {10.1007/s10208-014-9194-z},
  journal =	 {Foundations of Computational Mathematics},
  volume =	 14,
  issue =    {1--3},
  year =	 2014,
  pages =	 {1209--1242},
}
\end{document}